\newcommand{\N}{\mathbb{N}}
\newcommand{\R}{\mathbb{R}}
\newcommand{\Z}{\mathbb{Z}}
\renewcommand{\oe}{\Omega_\epsilon}
\newcommand{\osem}{\Omega_{\ast,\epsilon}^M}
\newcommand{\oem}{\Omega_{\epsilon}^M}
\newcommand{\oeh}{\Omega_{\epsilon,h}}
\newcommand{\ssepm}{S_{\ast,\epsilon}^\pm}
\newcommand{\ueps}{u_{\epsilon}}
\newcommand{\uepsm}{u_{\epsilon}^M}
\newcommand{\fxe}{\frac{x}{\epsilon}}
\newcommand{\neps}{N_{\epsilon}}
\newcommand{\x}{\bar{x}}
\newcommand{\Leps}{\mathcal{L}_{\epsilon}}
\newcommand{\Heps}{\mathcal{H}_{\epsilon}}
\newcommand{\veps}{v_{\epsilon}}
\newcommand{\weps}{w_{\epsilon}}
\newcommand{\foe}{\frac{1}{\epsilon}}
\newcommand{\peps}{\phi_{\epsilon}}
\newcommand{\ie}{i.\,e.,\,}
\newcommand{\Hepsom}{\mathcal{H}_{\epsilon,0}^M}
\newcommand{\who}{\widehat{\Omega}}
\newcommand{\whs}{\widehat{\Sigma}}
\newcommand{\Hepsho}{\mathcal{H}_{\epsilon,h,0}}
\newcommand{\Lepsh}{\mathcal{L}_{\epsilon,h}}
\newcommand{\pepsh}{\phi_{\epsilon,h}}
\newcommand{\per}{\mathrm{per}}
\newcommand{\te}{\mathcal{T}_{\epsilon}}
\newcommand{\ue}{\mathcal{U}_{\epsilon}}
\newcommand{\Ho}{\mathcal{H}_0}
\newcommand{\bxi}{\bar{\xi}}
\newtheorem{definition}{Definition}
\newtheorem{remark}{Remark}
\newtheorem{theorem}{Theorem}
\newtheorem{proposition}{Proposition}
\newtheorem{lemma}{Lemma}
\newtheorem{corollary}{Corollary}
\title{Singular limit for reactive diffusive transport through an array of thin channels in case of critical diffusivity}
\author{M. Gahn and M. Neuss-Radu}
\date{}
\begin{document}

\maketitle

\begin{abstract}
We consider a nonlinear reaction--diffusion equation in a domain consisting of two bulk regions connected via small channels periodically distributed within a thin layer. The height and the thickness of the channels are of order $\epsilon$, and the equation inside the layer depends on the parameter $\epsilon$. We consider the critical scaling of the diffusion coefficients in the channels and nonlinear Neumann-boundary condition on the channels' lateral boundaries. We derive effective models in the limit $\epsilon \to 0 $, when the channel-domain is replaced by an interface $\Sigma$ between the two bulk-domains. Due to the critical size of the diffusion coefficients, we obtain jumps for the solution and its normal fluxes across $\Sigma$, involving the solutions of local cell problems on the reference channel in every point of the interface $\Sigma$.
\end{abstract}

\noindent\textbf{Keywords:}
Array of channels; homogenization; two-scale convergence; reaction-diffusion equation; effective transmission conditions; nonlinear boundary conditions
\\

\noindent\textbf{MSC:}
35K57; 35B27; 35Q92

\section{Introduction}

In this paper we consider reaction-diffusion processes in a microscopic domain $\oe$ consisting of two bulk-domains $\oe^+$ and $\oe^-$, which are connected via small periodically distributed channels $\osem$ obtained by scaled and shifted reference elements $Z^{\ast}$.  The height and the thickness of the channels, as well as the periodicity of the channels, are of order $\epsilon$, where the parameter $\epsilon$ is small compared to the size of the bulk-domains.  Within the microscopic domain $\oe$ 
we consider a reaction-diffusion equation with nonlinear reaction-kinetics and nonlinear Neumann-boundary condition on the lateral boundary of the channels. This boundary condition describes for example reactions taking place at the boundary of the channels or exchange with the surrounding medium. Within the channels we assume low diffusivity of order $\epsilon$. The aim is the derivation of a macroscopic model with effective interface conditions in the singular limit $\epsilon \to 0$. We only consider the case of a scalar equation. However, the results can be easily extended to systems of equations.

Reaction-diffusion transport in domains connected by thin channels plays an important role in many applications. We are particularly interested in applications in biology and biomedicine, where, for example, the exchange between cells and extracellular space occurs through pores in the cell membrane, or where cell layers such as the blood-brain barrier or the blood-air barrier control the exchange of chemical substances, ions, fluids, or cells (e.g., immune cells) between compartments of the organism. Quantifying the barrier function of such layers in dependence also on microenvironmental influences (pH, hypoxia) leads to extremely challenging problems. First experimental investigations in this direction are performed with the help of \textit{in vitro} devices consisting of  microchambers separated by cell layers cultured on porous membranes  see e.g. Figure \ref{ibidi}. These so called organs-on-chips simulate structural and functional features of \textit{in vivo} cellular layers, see e.g. \cite{Huh2010} for a lung-on-chip microdevice reproducing key structural and functional properties of the human alveolar-capillary interface. Other important applications where reaction-diffusion transport through small channels play an important role are membranes perforated by tiny pores and filters used in engineering sciences, see e.g. Figure \ref{FigurNanoMembrane}. In all these applications nonlinear effects on the boundary of the channels play an important role.  We note that in the mentioned applications the geometry and the bio-chemical and bio-physical processes can be considerably more complicated than in our model. However, even for relatively simple models, just taking into account reaction and diffusion processes, numerical simulations are very expensive.  Therefore, macroscopic approximations of the solutions, obtained in the limit $\epsilon \to 0$, are highly demanded and the methods derived in our paper  are an essential step  for the treatment of more complex problems.
\begin{figure}
\begin{center}
\includegraphics[scale=0.3]{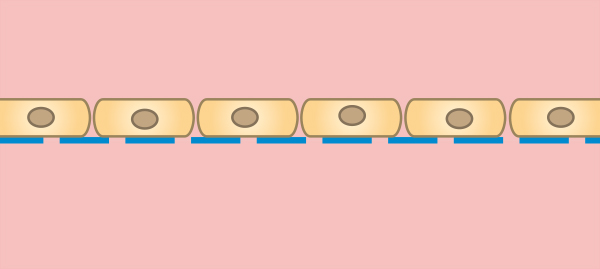} \quad 
\includegraphics[scale=0.2]{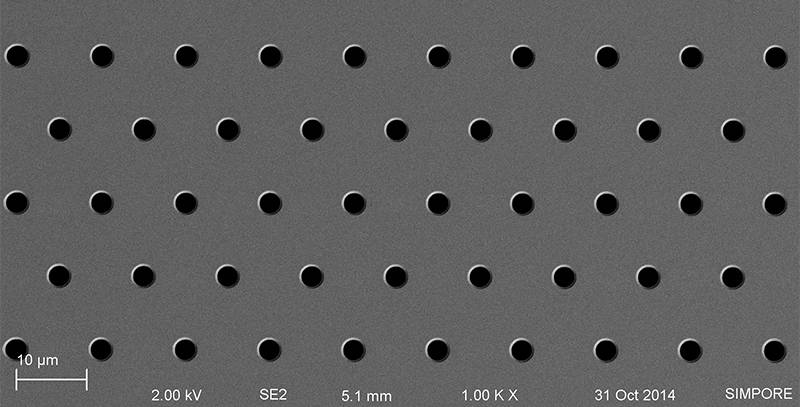} 
\caption{Left: Endothelial barrier assay: Human endothelial cell monolayer cultivated on ibiPore  glas membrane separating the upper and lower microchamber. Right: Geometry of the supporting porous membrane. Bar: 10 $\mu$m, diameter of the single pore: 3$\mu$m. With the kind approval of ibidi GmbH.}
\label{ibidi}
\end{center}
\end{figure}
\begin{figure}[h]
\begin{center}
\includegraphics[scale=0.4]{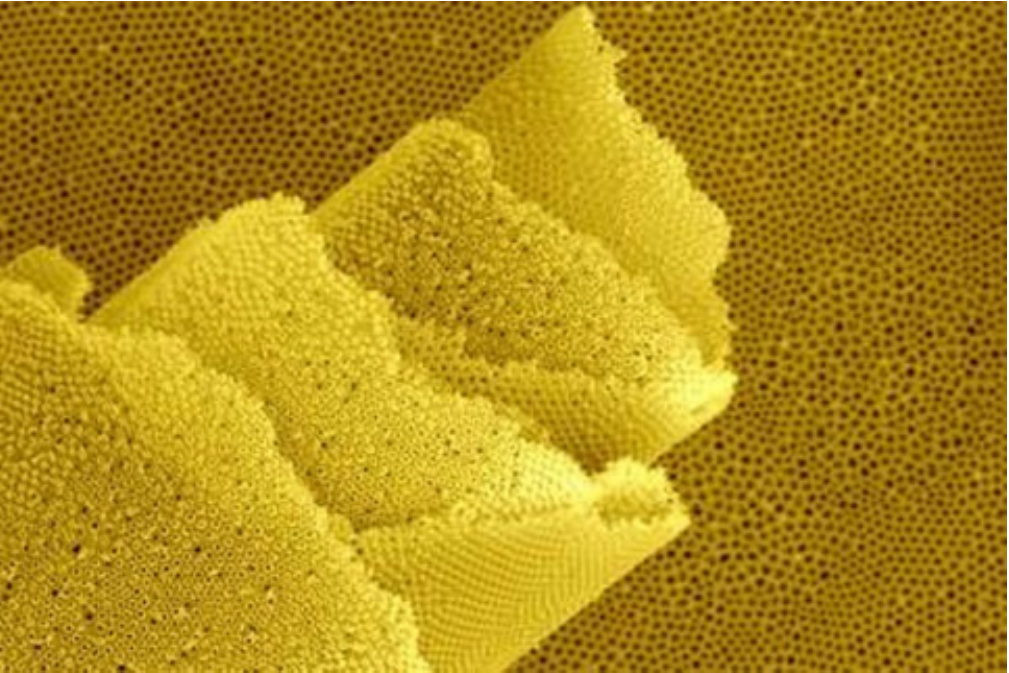}  
\caption{Two rolled up, nested gold (Au)-membranes with ordered pores. \copyright Claudia B\"uttner, Max-Planck-Institut f\"ur Mikrostrukturphysik, Halle (Saale). With the kind approval of science2public - Gesellschaft f\"ur Wissenschaftskommunikation.}
\end{center}
\label{FigurNanoMembrane}
\end{figure}

In the singular limit $\epsilon \to 0$ we obtain two bulk-domains $\Omega^+$ and $\Omega^-$, which are separated by the interface $\Sigma$. The crucial point is the derivation of the interface conditions across $\Sigma$ in the macroscopic model. These interface conditions carry information about the microscopic processes in the channels $\osem$. 
We obtain a jump for the solution and its normal fluxes across $\Sigma$, involving the solution of local cell problems of reaction-diffusion type on the reference element $Z^{\ast}$ in every point of the interface $\Sigma$. This coupling between the microscopic variable from $Z^{\ast}$ and the macroscopic variable from $\Sigma$ arises due to critical scaling for the diffusion in the microscopic model, and leads to additional difficulties in the limit process. Here, the most challenging step is to pass to the limit in the thin channels, where we have to cope simultaneously with the singular limit and the periodicity of the channels. For this we use the method of two-scale convergence for thin channels and their oscillating surface, which was defined in \cite{BhattacharyaGahnNeussRadu} and is closely related to the two-scale convergence in thin heterogeneous layers introduced in \cite{NeussJaeger_EffectiveTransmission}, see also \cite{MarusicMarusicPalokaTwoScaleConvergenceThinDomains} for homogeneous thin structures. Due to the specific scaling in the microscopic equation in the channels, the macroscopic two-scale limit of the microscopic solution is depending on both, the macroscopic variable $\x \in \Sigma$ and the microscopic variable $y \in Z^{\ast}$.

As a first step in the derivation of the macroscopic model we derive $\epsilon$-dependent a priori estimates for the microscopic solutions, which imply (weak) two-scale convergence for the solutions in the thin layer. These compactness results are enough to pass to the limit in the linear terms of the microscopic equation, but not for the nonlinear terms, especially on the boundary of the channels. For those terms we need strong two-scale compactness results. Using the unfolding method for thin channels, which gives us an equivalent characterization of the two-scale convergence, we prove a general strong two-scale compactness result of Kolmogorov-Simon-type based on error estimates for the discrete shifts of the microscopic solution. An additional difficulty in our problem arises due to the fact, that because of the low regularity assumptions on the data and the nonlinear boundary condition the time-derivative of the microscopic solution is only a functional  on a function space defined on the whole domain $\oe$. Therefore, it is not straightforward to obtain the existence of the time-derivative of the unfolded sequence together with suitable a priori estimates. To overcome this problem we use a duality argument to show that the time-derivative of the unfolded sequence  in the channels exists and can be controlled by the time-derivative of the microscopic solution on the whole domain.  To exhibit the form of the macroscopic model in the limit $\epsilon \to 0$, we construct test-functions adapted to the structure of the transmission problem and which are admissible for the definition of the two-scale convergence in the channel domain. These test-functions lie in a function space which we show to be dense in the space of macroscopic solutions.

First homogenization results for problems with a geometrical framework related to our setting were given in \cite{sanchez70boundary}. In \cite{ConcaI1987,ConcaII1987}, the Stokes-equation was considered in two-bulk domains separated by a sieve of thickness zero. Contributions to the homogenization of the Laplace equation in domains connected by thin channels have been given in \cite{del1987thick, Onofrei2006, Shaposhnikova2001, yablokov2004problem}, where the asymptotic behavior of the solution is investigated for different ratio of the thickness of the layer and the radius of the cylindrical channels. The more challenging problem concerning the ion transport through channels of biological membranes was announced in \cite{neuss2005homogenization}. The homogenization of an elliptic Steklov type spectral problem in domains connected by thin channels was considered in \cite{amirat2016asymptotics, gadyl2018asymptotic}. 
In \cite{MatzavinosPtashnyik2016} a homogenization problem for  diffusion-advection processes for oxygen transport through skin layer (heterogeneous thin layer) and fat tissue (heterogeneous bulk domain) is considered. The problem is linear and the case of high diffusivity of order $\epsilon^{-1}$ was investigated. The case of transport through channels for moderate and high diffusivity, again for linear problems with Neumann boundary conditions, can be found in \cite{BhattacharyaGahnNeussRadu}. More precisley, the diffusion in the channels is of order $\epsilon^{\gamma}$ with $\gamma \in [-1,1)$. In this case, the two-scale limit of the microscopic solution in the channels is not depending on the microscopic variable $y \in Z^{\ast}$. In the present paper we treat the critical case $\gamma = 1$ of small diffusion (leading to a coupled micro-macro model), and additionally take into account nonlinear reaction-kinetics in the bulk-domains and in the channels, as well as on the boundary of the channels.

Reaction-diffusion problems through a thin layer instead of channels were considered in  \cite{GahnEffectiveTransmissionContinuous,GahnNeussRaduKnabner2018a,NeussJaeger_EffectiveTransmission}.  
In \cite{NeussJaeger_EffectiveTransmission} continuous transmission conditions between the bulk and the layer and nonlinear reaction kinetics are considered.  The results are based on uniform $L^{\infty}$-estimates for the solution and $L^2$-estimates for the time-derivative. In \cite{GahnNeussRaduKnabner2018a} a nonlinear interface condition between the thin layer and the bulk domains is considered, leading to $(H^1)'$-regularity for the time-derivative. 
In our paper we extend those results to channels with continuous transmission conditions to the bulk domains, nonlinear boundary conditions, especially on the channels lateral boundaries, and low regularity for the time-derivative (operators on function spaces defined on the whole microscopic domain $\oe$), in the critical case of low diffusion in the channels, see also \cite{GahnDiplomarbeit}. For this purpose we derive general strong two-scale compactness results of Kolmogorov-Simon-type which allow us to avoid the use of $L^{\infty}$-estimates.

The structure of the paper is as follows. In Section \ref{SectionMicroscopicModel} we introduce the microscopic model with its underlying geometrical structure, and give the definition of a weak solution and the necessary function spaces. Further, we prove existence and uniqueness of a unique solution and establish a priori estimates for these solutions and their shifts. In Section \ref{Sect_Two_scale_convergence} we give the definition of the two-scale convergence and the unfolding operator for thin channels. Further, we derive weak and strong two-scale compactness results associated with the specific scaling in our microscopic model. Finally in Section \ref{SectionDerivationMacroscopicModel} we derive the macroscopic model.

\subsection{Highlights and original contributions}
In this paper we address the dimension reduction and homogenization of a reaction-diffusion model in a domain consisting of two bulk regions connected by an array of thin channels with critical scaling of order $\epsilon$ of the diffusivity in the channels. This scaling leads in the limit $\epsilon \to 0$ to a macroscopic model with effective interface transmission conditions which couple the micro and macro variables. We derive general strong two-scale compactness results of Kolmogorov-Simon-type, which are needed also for more complex nonlinear problems involving transmission processes through channels, arising e.g. from bio-medical or engineering applications. The main results of the paper are:
\\
\begin{enumerate}
[label = - ]
\item Existence of a microscopic solution with uniform a priori estimates with respect to $\epsilon$ and estimates for  the differences between the shifted solution and the solution itself, see Section \ref{SectionExistenceAprioriEstimates};
\item Commuting property between the unfolding operator and the generalized time-derivative based on a general duality argument, see Lemma \ref{LemmaGeneralTimeDerivative} and Proposition \ref{PropositionExistenceTimeDerivativeUnfoldedSequence};
\item A general strong multi-scale compactness result of Kolmogorov-Simon-type under low regularity assumptions on the time-derivative, see Theorem \ref{TheoremStrongTSConvergence}; 
\item Derivation of a macroscopic model including effective transmission conditions across $\Sigma$, where the jump of the solution and its flux across $\Sigma$ is coupled to local cell problems of reaction-diffusion type, see Theorem \ref{MainResultMacroscopicModel};
\item Density result for a class of test functions adapted to the micro-macro structure of the macroscopic model, see Proposition \ref{Proposition_Density}.
\end{enumerate}

\section{The microscopic model}
\label{SectionMicroscopicModel}
	
Let $\epsilon > 0$ be a sequence of positive numbers tending to zero such that $\epsilon^{-1}\in \N$ and let $H>0$ be a fixed real number. Let $n\in \N, n\geq2$. For $x\in \R^n$, we write $x=(\x, x_n) \in \R^{n-1}\times \R$.
Let $\Omega$ be a subset of $\R^n$ defined as
\begin{align*}
\Omega := \Sigma \times (-H,H),
\end{align*}
where $\Sigma\subset \R^{n-1}$ is a connected and open domain with Lipschitz-boundary.

	
We consider the domain $\oe \subset \Omega$ consisting of three subdomains: the bulk regions $\oe^+$ and $\oe^-$ which are connected by channels periodically distributed within a thin layer constituting the domain $\osem$, see Figure \ref{Figure_Microscopic_Domain}. 
\begin{figure}[h]
\centering
\begin{minipage}[t]{0.43\textwidth}
\includegraphics[width=\textwidth]{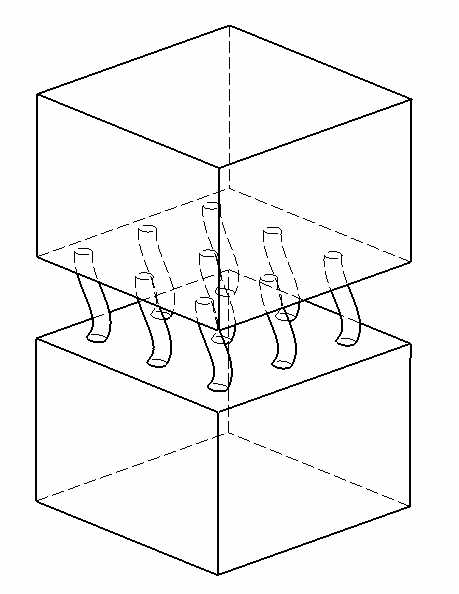}
\caption{Microscopic domain $\oe$ for the case $\epsilon= \frac{1}{3}$ and $n=3$.}\label{Figure_Microscopic_Domain}
\end{minipage}
\begin{minipage}[t]{0.05\textwidth}
\mbox{}
\end{minipage}
\begin{minipage}[t]{0.43\textwidth}
\includegraphics[width=\textwidth]{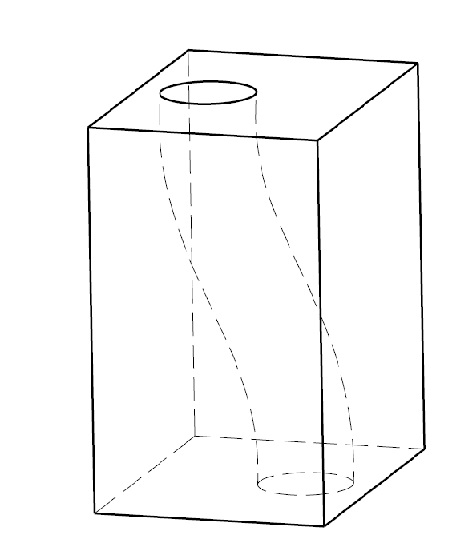}
\caption{Standard channel domain $Z^{\ast}$ in the standard cell $Z$.}\label{Figure_Channel}
\end{minipage}
%
%
\end{figure}
The bulk regions are given by: 
\begin{align*}
	\oe^+  = \Sigma \times (\epsilon,H), \quad	\oe^-  =  \Sigma \times (-H,-\epsilon).
\end{align*}
Furthermore, we denote by 
\begin{align*}
	S_\epsilon^+ = \Sigma \times \{\epsilon\}, \quad 	S_\epsilon^- = \Sigma\times \{-\epsilon\},
\end{align*}
the bottom and the top of $\oe^+$ and $\oe^-$, respectively.
The thin layer separating the two bulk domains is given by
\begin{align*}
\oem := \Sigma \times (-\epsilon, \epsilon).
\end{align*}
To define the channels, which are periodically distributed within $\oem$, we first define the standard cell
\begin{align*}
Z:=Y \times  (-1,1) := (0,1)^{n-1} \times  (-1,1)
\end{align*}
with the upper and lower boundaries
\begin{align*}
S^{\pm}:= Y \times \{\pm 1\}.
\end{align*}
Let $Z^{\ast} \subset Z$ be a connected and  open Lipschitz domain representing the standard channel domain, see Figure \ref{Figure_Channel}, such that
\begin{align*}
S_{\ast}^\pm :=\{y \in \partial Z^{\ast} : y_n=\pm 1\} 
\end{align*}
is a Lipschitz-domain in $\R^{n-1}$ with positive measure.
Let the lateral boundary of the standard channel be denoted by 
\begin{align*}
N:= \partial  Z^{\ast} \setminus (S_{\ast}^+ \cup S_{\ast}^-).
\end{align*}
We assume that this lateral boundary has a non-zero distance to the lateral boundary  $\partial Z \setminus (S^+ \cup S^-))$ of the standard cell $Z$. 
The domain consisting of the channels is then given by
\begin{align*} 
\osem : = \bigcup\limits_{\bar{k} \in  I_\epsilon} \epsilon \big(Z^{\ast}+(\bar{k},0)\big) 
\end{align*}  
where $I_\epsilon= \{ \bar{k}\in \Z^{n-1}: \epsilon \big(Z+(\bar{k},0)\big) \subset \oe^M \}$. The interfaces between the channel domain  and the bulk domains are defined by 
\begin{align*}
\ssepm := \bigcup\limits_{\bar{k} \in  I_\epsilon} \epsilon \big(S_{\ast}^\pm + (\bar{k},0)\big).
\end{align*}  
The microscopic domain $\oe$ is thus defined by
\begin{align*}
\oe= \oe^+ \cup \oe^-\cup \osem \cup S_{{\ast}, \epsilon}^+ \cup S_{{\ast}, \epsilon}^-.
\end{align*}
We assume $\oe$ to be Lipschitz. The boundary of $\oe$ is given by $\partial \Omega_ \epsilon= N_\epsilon\cup\partial_{N} \oe$, where 
\begin{align*}
N_\epsilon  := \bigcup\limits_{k \in  I_\epsilon} \epsilon \big(N + (k,0)\big), \quad \partial_{N} \oe  := \partial \Omega_ \epsilon \setminus N _\epsilon.
\end{align*}
For a function defined on $\oe$, we usually add superscripts $+,-,M$ to denote its restriction to the sub-domains $\oe^+, \oe^-$ and $\osem$ respectively. 
	Finally, we define the domains
\begin{align*}
\Omega^+ := \Sigma \times (0,H), \quad \Omega^- := \Sigma \times (-H,0).
\end{align*}
which are separated by the interface $\Sigma$.

Let $p \in [1,\infty]$ and $p'$ denotes the dual exponent of $p$, \ie $\frac{1}{p} + \frac{1}{p'} = 1$.
For a suitable subset $G\subset \R^m$ with $m\in \N$ we use the following notation for $u \in L^p(G) $ and $v \in L^{p'}(G)$:
\begin{align*}
(u,v)_G:= \int_G u (z) v(z) dz,
\end{align*}
and for $p=p'=2$ we just obtain the inner product on $L^2(G)$. For a Banach space $X$ we denote its dual by $X'$ and by $\langle \cdot , \cdot \rangle_{X',X}$ the duality pairing between $X'$ and $X$, \ie for $x'\in X'$ and $x \in X$ we write
\begin{align*}
\langle x' , x \rangle_{X',X} := x'(x).
\end{align*}
With the subscript $\per$ we indicate function spaces of functions defined on subsets of $\R^n$ which are periodic with respect to the first $(n-1)$ components. For example we write $C_{\per}(\overline{Z^{\ast}})$ for the space of functions $C(\overline{Z^{\ast}})$ periodically extended in $\bar{y}$-direction. 

For two values $a^+ \in \R$ and $a^-\in \R$ we use the notation
\begin{align*}
\sum_{\pm} a^{\pm} := a^+ + a^-.
\end{align*}

\subsection{The microscopic model}

We study the following reaction-diffusion problem  for the unknown function $\ueps= (\ueps^+,\uepsm,\ueps^-):(0,T)\times \oe \rightarrow \R$:
\begin{subequations}\label{MicroscopicModel}
\begin{align}\label{micro_reac_diff_eq}
\begin{aligned}
\partial_{t} \ueps^{\pm} -D^{\pm} \Delta \ueps^{\pm} &=f^{\pm}(\ueps^{\pm}) &\mbox{ in }& (0,T)\times \oe^{\pm},
\\
\frac{1}{\epsilon}\partial_{t}\uepsm - \nabla\cdot\left(\epsilon  D^M\left(\frac{x}{\epsilon}\right) \nabla \uepsm\right) &=  \frac{1}{\epsilon}g_\epsilon(\uepsm) &\mbox{ in }& (0,T)\times \osem,
\end{aligned}
\end{align}
with the boundary conditions
\begin{align}\label{micro_boundary_cond}
\begin{aligned} 
\nabla \ueps^\pm  \cdot\nu &=0 &\mbox{ on }& (0,T) \times \partial_{N}\oe ,
\\
-\epsilon D^M\left(\frac{x}{\epsilon}\right)\nabla \ueps^M \cdot \nu &= h_\epsilon(\uepsm) &\mbox{ on }& (0,T) \times N_\epsilon,
\end{aligned}
\end{align}
where $\nu$ denotes the outer unit normal with respect to $\oe$, and the initial conditions
\begin{align} \label{micro_initial_cond_pb_1}
\ueps(0) =  u_{\epsilon,i} \quad \mbox{on } \oe.
\end{align}
At the interfaces $\ssepm$, we impose the natural transmission conditions, i.e., the continuity of the solution and of the normal flux, namely
\begin{align}\label{micro_natural_trans_cond}
\begin{aligned} 
\ueps^\pm &=\ueps ^M &\mbox{ on }& (0,T) \times \ssepm
\\
D^\pm \nabla \ueps^\pm\cdot\nu &= \epsilon D^M \left(\frac{x}{\epsilon}\right)\nabla \ueps^M\cdot\nu &\mbox{ on }&(0,T) \times \ssepm,
\end{aligned}
\end{align}
\end{subequations}	
where here $\nu$ denotes the unit normal on $\ssepm$ (with respect to $\osem$).
We emphasize that we  consider constant diffusion coefficients $D^{\pm}$ in the bulk-domains $\oe^{\pm}$ just for an easier notation. The results can be easily extended to more general problems, for example oscillating diffusion coefficients.

\subsubsection{Assumptions on the Data}
\begin{enumerate}[label = (A\arabic*)]
\item\label{AssumptionDiffusion} 	We assume   $D^\pm >0$ and $D^M:  Z^{\ast} \to \mathbb{R}^{n\times n}$ with $D^M \in L^{\infty}(Z^{\ast})^{n\times n}$ $Y$-periodic and  coercive, that means for almost every $y \in Z^{\ast}$ and all $\xi \in \R^n$ it holds for $c_0>0$ independent of $y$ and $\xi$ that
\begin{align*}
D^M(y)\xi \cdot \xi \geq c_0 \|\xi\|^2.
\end{align*}
%
\item\label{AssumptionReactionRateBulk}  For the reaction rates in the bulk domains we assume $ f \in C^0([0,T]\times \overline{\Omega} \times \R)$ is globally Lipschitz-continuous with respect to the last variable.
\item\label{AssumptionReactionRateChannels}  For the reaction rate in the channels domain we assume $g_{\epsilon}(t,x,z)= g\left(t,\fxe,z\right)$ for $(t,x,z)\in (0,T) \times \osem \times \R$, with $g \in C^0([0,T] \times \overline{Z^{\ast}} \times \R)$ $Y$-periodically extended in the second variable. Further, we assume that $g$ is globally  Lipschitz-continuous with respect to the last variable.
\item\label{AssumptionReactionRateSurfaceChannels}  For the reaction rate on the lateral surface of the channels we assume that $h_\epsilon (t,x,z):= h\left(t,\fxe,z \right)$ for $(t,x,z) \in (0,T)\times \neps \times \R$, with $h \in C^0([0,T] \times \overline{N} \times \R)$ extended $Y$-periodically with respect to the second variable, and globally Lipschitz-continuous with respect to the last variable.
\item\label{AssumptionInitialCondition}  For the initial conditions, we assume that
\begin{align*}
u_{\epsilon,i}(x)= 
\begin{cases}
u_i^+(x) &\mbox{ for } x \in \oe^+,
\\
u_i^M\left(\x,\frac{x}{\epsilon}\right) &\mbox{ for } x \in \osem,
\\
u_i^-(x) &\mbox{ for } x \in \oe^-,
\end{cases}
\end{align*} 
where $(u_i^+,u_i^M, u_i^-) \in L^2(\Omega^+)\times L^2 (\Sigma ,C^0(\overline{Z^{\ast}}))\times L^2(\Omega^-) $. Especially, it holds that
\begin{align*}
\frac{1}{\sqrt{\epsilon}} \left\|u_i^M\left(\cdot, \frac{\cdot_{x}}{\epsilon}\right)\right\|_{L^2(\osem)} \le \|u_i^M\|_{L^2(\Sigma,C^0(\overline{Z^{\ast}}))} \le C.
\end{align*}
\end{enumerate}

\subsection{Weak solution of the microscopic problem}

In this section we define a weak solution of the microscopic model $\eqref{MicroscopicModel}$. Therefore, we introduce Hilbert spaces with inner products adapted to the scaling in the equation for $\uepsm$ in the channels. First of all, we define 
\begin{align*}
\Leps := L^2(\oe) = L^2(\oe^+)\times L^2(\osem)\times L^2(\oe^-),
\end{align*}
together with the inner product 
\begin{align*}
(\weps,\veps)_{\Leps}:= (\weps,\veps)_{\oe^+} + (\weps,\veps)_{\oe^-} + \foe (\weps,\veps)_{\osem}.
\end{align*}
The second equality in the definition of $\Leps$ means that we always identify $L^2(\oe)$ with the product space on the different compartments. Further, we define
\begin{align*}
\Heps:=   \left\{ (\ueps^+,\uepsm,\ueps^-) \in H^1(\oe^+)\times H^1(\osem) \times H^1(\oe^-) \, : \, \ueps^{\pm} = \uepsm \mbox{ on } \ssepm\right\},
\end{align*}
with the inner product
\begin{align*}
(\weps,\veps)_{\Heps} := (\weps,\veps)_{\Leps} + (\nabla \weps ,\nabla \veps)_{\oe^+ } + (\nabla \weps,\nabla \veps)_{\oe^-} + \epsilon (\nabla \weps,\nabla \veps)_{\osem}.
\end{align*}
Of course, it holds $\Heps = H^1(\oe)$ in a topological sense.
The associated norms on $\Leps$ and $\Heps$ are denoted by $\|\cdot \|_{\Leps}$ and $\|\cdot\|_{\Heps}$. We immediately obtain the Gelfand-triple
\begin{align}\label{GelfandTriple}
\Heps \hookrightarrow \Leps \hookrightarrow \Heps'.
\end{align}
Hence, for $\veps \in L^2((0,T),\Heps)\cap H^1((0,T),\Heps')$ the time-derivative $\partial_t \veps $ is characterized by the identity (see  \cite[Prop. 23.20]{ZeidlerIIA})
\begin{align}
\label{CharacterizationTimeDerivative}
\int_0^T \langle \partial_t \veps , \peps \rangle_{\Heps',\Heps} \psi dt = - \int_0^T (\veps ,\peps)_{\Leps} \psi' dt 
\end{align}
for all $\peps \in \Heps$ and $\psi \in \mathcal{D}(0,T)$.

\begin{definition}\label{DefinitionWeakSolution}
We say that $\ueps = (\ueps^+,\uepsm,\ueps^-)$ is a weak solution of the microscopic model $\eqref{MicroscopicModel}$, if 
\begin{align*}
\ueps \in L^2((0,T),\Heps)\cap H^1((0,T),\Heps'),
\end{align*}
for all $\peps \in \Heps$ it holds the variational equation
\begin{align}\label{VariationalEquation}
\begin{aligned}
\langle \partial_t \ueps,\peps\rangle_{\Heps',\Heps}& + \sum_{\pm} \big(D^{\pm} \nabla \ueps^{\pm},\nabla \peps\big)_{\oe^{\pm}} + \epsilon \left(D^M\left(\frac{\cdot}{\epsilon}\right)\nabla \uepsm,\nabla \peps\right)_{\osem}
\\
&= \sum_{\pm} \big(f^{\pm} (\ueps^{\pm}),\peps\big)_{\oe^{\pm}} + \foe \big(g_{\epsilon}(\uepsm),\peps\big)_{\osem} - \big(h_{\epsilon}(\uepsm),\peps\big)_{\neps},
\end{aligned}
\end{align}
and $\ueps$ fulfills the initial condition $\eqref{micro_initial_cond_pb_1}$. This condition makes sense since $\ueps \in C^0([0,T],\Leps)$, see \cite[Prop. 23.23]{ZeidlerIIA} and $u_{\epsilon,i} \in \Leps$ by  Assumption \ref{AssumptionInitialCondition}.
\end{definition}

Let us specify how the weak formulation in Definition \ref{DefinitionWeakSolution} is related to the scaling for the time-derivative in the classical formulation in $\eqref{micro_reac_diff_eq}$. Under the additional regularity assumptions (which is not necessary for our analysis) 
$\partial_t \ueps^{\pm} \in L^2((0,T),H^1(\oe^{\pm})')$ and $\partial_t \uepsm \in L^2((0,T),H^1(\osem)')$ for a weak solution $\ueps$, we obtain from the Gelfand-tripel $\eqref{GelfandTriple}$ and the definition of the inner product on $\Leps$ (see also $\eqref{CharacterizationTimeDerivative}$)
\begin{align*}
\langle \partial_t \ueps,\peps\rangle_{\Heps',\Heps} = \sum_{\pm} \langle \partial_t \ueps^{\pm},\peps\rangle_{H^1(\oe^{\pm})',H^1(\oe^{\pm})} + \foe \langle \partial_t \uepsm,\peps\rangle_{H^1(\osem)',H^1(\osem)}.
\end{align*} 
However, in our case the time-derivative $\partial_t \ueps $ is a functional defined on the whole space $\Heps$, \ie a space of functions defined on the whole domain $\oe$, and it is not straightforward to restrict such functionals to $H^1(\oe^{\pm})'$ and $H^1(\osem)'$. For the derivation of the strong compactness results in the channels we have to control the time-derivative in the domain $\osem$. Therefore, we introduce the space of functions with zero traces on the interface between the channels and the bulk-domains
\begin{align*}
\Hepsom:= \left\{ \veps^M \in H^1(\osem)\, : \, \uepsm|_{\ssepm}=0  \right\},
\end{align*}
together with the inner product
\begin{align*}
(\weps^M,\veps^M)_{\Hepsom} := \foe (\weps^M,\veps^M)_{\osem} + \epsilon (\nabla \weps^M,\nabla \veps^M)_{\osem},
\end{align*}
and the associated norm denoted by $\|\cdot \|_{\Hepsom}$. This leads to the Gelfand-triple $\Hepsom \hookrightarrow L^2(\osem)\hookrightarrow (\Hepsom)'$. By extending functions $\peps^M \in \Hepsom$ by zero to $\oe$, we obtain for $\ueps \in L^2((0,T),\Heps)\cap H^1((0,T),\Heps')$
 (see $\eqref{CharacterizationTimeDerivative}$)

\begin{align*}
\int_0^T\langle \partial_t \uepsm , \peps^M \rangle_{\Heps',\Heps}  \psi(t) dt &=- \int_0^T (  \ueps , \peps^M )_{\Leps} \psi'(t)dt
\\
&= - \frac{1}{\epsilon} \int_0^T (\uepsm,\peps^M)_{L^2(\osem)} \psi'(t) dt ,
\end{align*}

and since $\peps^M \mapsto \epsilon \langle \partial_t \uepsm ,\peps^M \rangle_{\Heps',\Heps} $ is continuous on $\Hepsom$, we get using \cite[Prop. 23.20]{ZeidlerIIA}
\begin{align*}
\langle \partial_t \uepsm , \peps^M \rangle_{(\Hepsom)',\Hepsom} = \epsilon \langle \partial_t \ueps , \peps^M \rangle_{\Heps',\Heps},
\end{align*}
and thus
\begin{align}\label{EstimateRestrictionTimeDerivativeChannels}
\|\partial_t \ueps^M \|_{(\Hepsom)'} \le \epsilon \|\partial_t \ueps\|_{\Heps'}.
\end{align}

\subsection{Existence of a weak solution and a priori estimates}
\label{SectionExistenceAprioriEstimates}

In this section we give the existence result for the microscopic model $\eqref{MicroscopicModel}$ and show a priori estimates depending explicitly on the parameter $\epsilon$. These estimates form the basis for the derivation of the macroscopic model.

\begin{proposition}
\label{ExistenceMicroscopicModel}
There exists a unique weak solution of the microscopic model $\eqref{MicroscopicModel}$.
\end{proposition}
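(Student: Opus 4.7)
The plan is a standard Galerkin approximation combined with energy estimates, Aubin--Lions compactness for the nonlinearities, and a Gronwall argument for uniqueness. The structural fact underlying the proof is that, for $\epsilon>0$ fixed, the weak formulation \eqref{VariationalEquation} is an abstract parabolic equation in the Gelfand triple \eqref{GelfandTriple}, whose principal part is bounded and coercive (by \ref{AssumptionDiffusion}) and whose lower-order and boundary terms are globally Lipschitz (by \ref{AssumptionReactionRateBulk}--\ref{AssumptionReactionRateSurfaceChannels}). All constants are allowed to depend on $\epsilon$ in this proposition.

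First, since $\Heps$ is a separable Hilbert space, pick an $\Leps$-orthonormal basis $\{\phi_k\}_{k\in\N}\subset \Heps$ and form the Galerkin approximation
\begin{linenomath*}\begin{align*}
\ueps^N(t)=\sum_{k=1}^N c_k^N(t)\,\phi_k,\qquad \ueps^N(0)=\text{$\Leps$-projection of } u_{\epsilon,i},
\end{align*}\end{linenomath*}
with $u_{\epsilon,i}\in \Leps$ thanks to the scaling estimate in \ref{AssumptionInitialCondition}. Testing \eqref{VariationalEquation} against $\phi_j$ for $j\le N$ produces an ODE system for $c^N$. At fixed $\epsilon>0$ the trace operator $\Heps\to L^2(\neps)$ is bounded, so the right-hand side is globally Lipschitz in $c^N$ by \ref{AssumptionReactionRateBulk}--\ref{AssumptionReactionRateSurfaceChannels}, and Picard--Lindel\"of yields a unique $c^N\in C^1([0,T])^N$.

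Second, I would test the Galerkin equation with $\ueps^N$ itself. Using coercivity of $D^M$, the linear growth bounds implied by Lipschitz continuity of $f^\pm, g, h$, Young's inequality, and the fixed-$\epsilon$ trace inequality $\|v\|_{L^2(\neps)}\le C(\epsilon)\|v\|_{H^1(\osem)}$ to control the surface term, one obtains
\begin{linenomath*}\begin{align*}
\|\ueps^N\|_{L^\infty((0,T),\Leps)}+\|\ueps^N\|_{L^2((0,T),\Heps)}\le C(\epsilon),
\end{align*}\end{linenomath*}
and, inserting this into \eqref{VariationalEquation} tested against arbitrary $\peps\in \Heps$, also $\|\partial_t \ueps^N\|_{L^2((0,T),\Heps')}\le C(\epsilon)$. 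Aubin--Lions then furnishes a subsequence converging strongly in $L^2((0,T),\Leps)$ and pointwise a.e., which together with the Lipschitz continuity of $f^\pm,g_\epsilon,h_\epsilon$ (and dominated convergence on $\neps$) allows passage to the limit in every nonlinear term. The resulting limit $\ueps$ is a weak solution, and $\ueps(0)=u_{\epsilon,i}$ follows from the embedding $L^2((0,T),\Heps)\cap H^1((0,T),\Heps')\hookrightarrow C([0,T],\Leps)$.

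Third, for uniqueness let $\ueps^{(1)},\ueps^{(2)}$ be two weak solutions with the same initial datum and set $w:=\ueps^{(1)}-\ueps^{(2)}$. Testing the difference of the variational equations with $w\in \Heps$ and using $\langle\partial_t w,w\rangle_{\Heps',\Heps}=\tfrac{1}{2}\tfrac{d}{dt}\|w\|_{\Leps}^2$, the Lipschitz constants of $f^\pm,g,h$, Young's inequality, and the fixed-$\epsilon$ trace inequality to absorb the surface contribution into $\epsilon\|\nabla w^M\|_{L^2(\osem)}^2$, yields
\begin{linenomath*}\begin{align*}
\frac{d}{dt}\|w\|_{\Leps}^2\le C(\epsilon)\|w\|_{\Leps}^2,
\end{align*}\end{linenomath*}
and Gronwall's lemma with $w(0)=0$ gives $w\equiv 0$.

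The only delicate point is the bookkeeping of the scaling-adapted norms: the factor $\foe$ in the $\Leps$-inner product on $\osem$, the factor $\epsilon$ on the gradient term in the channels, and the factor $\foe$ on $g_\epsilon$ in \eqref{VariationalEquation} conspire to give a self-consistent energy identity, which is essential for the above testing argument to close. Since $\epsilon$ is fixed here, the $\epsilon$-blow-up of $C(\epsilon)$ is harmless; the genuinely $\epsilon$-uniform estimates needed for the homogenization limit are the subject of the remainder of Section \ref{SectionExistenceAprioriEstimates}.
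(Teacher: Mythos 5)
Your proposal is correct and follows the same basic strategy the paper indicates: the paper's proof is a one-line sketch ("Galerkin-method and a Leray-Schauder fixed-point argument, together with a priori estimates similar as in Lemma \ref{LemmaAprioriEstimates}"), and your argument fills in that skeleton. The one genuine difference is how the nonlinearity is handled at the Galerkin level: the paper invokes a Leray--Schauder fixed-point argument, whereas you solve the finite-dimensional system directly by Picard--Lindel\"of, observing that the global Lipschitz continuity of $f^{\pm}$, $g_\epsilon$, $h_\epsilon$ in assumptions \ref{AssumptionReactionRateBulk}--\ref{AssumptionReactionRateSurfaceChannels}, combined with the fixed-$\epsilon$ boundedness of the trace onto $N_\epsilon$ and the equivalence of norms on the finite-dimensional Galerkin space, makes the right-hand side of the ODE system globally Lipschitz in the coefficients. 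This is more elementary and entirely adequate here; a Leray--Schauder argument would only be needed if the reaction rates had merely, say, continuity plus a sign or growth condition, so the paper's phrasing buys generality that the stated assumptions do not require. One small point worth tightening: to pass to the limit in the surface term $\big(h_{\epsilon}(\uepsm),\peps\big)_{\neps}$ you need strong convergence of the traces, which "dominated convergence on $\neps$" does not quite deliver on its own; the clean route is to apply Aubin--Lions with the intermediate space $H^{\beta}(\osem)$ for $\beta\in(\tfrac12,1)$, so that the compact embedding $H^1(\osem)\hookrightarrow H^{\beta}(\osem)$ together with the continuity of the trace $H^{\beta}(\osem)\to L^2(\neps)$ gives strong convergence of $\uepsm|_{\neps}$ in $L^2((0,T)\times\neps)$, after which the Lipschitz continuity of $h$ closes the argument. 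Everything else, including the uniqueness via Gronwall and the remark that all constants may blow up in $\epsilon$ since only the limit analysis requires uniformity, is sound.
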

\begin{proof}
The claim follows from the Galerkin-method and a Leray-Schauder fixed-point argument, together with a priori estimates similar as in Lemma \ref{LemmaAprioriEstimates}. Since this is quite standard we skip the details.
\end{proof}

In the next lemma, we state the a priori estimates for the microscopic solution $\ueps$. Therefore, we make use of the following tracee-inequality: For all $\theta >0$ there exists $C(\theta)>0$, such that for all $\peps^M \in H^1(\osem)$ it holds that
\begin{align}\label{TraceInequality}
\|\peps^M\|_{L^2(\neps)} \le \frac{C(\theta)}{\sqrt{\epsilon}} \|\peps^M\|_{L^2(\osem)} + \theta \sqrt{\epsilon}\|\nabla \peps^M\|_{L^2(\osem)}.
\end{align}
This result is easily obtained by decomposing $\osem$ into microscopic cells $\epsilon(Z^{\ast} + (\bar{k},0))$ for $\bar{k} \in I_{\epsilon}$, using a scaling argument, and the usual trace estimate on $\partial Z^{\ast}$.

\begin{lemma}
\label{LemmaAprioriEstimates}
The solution $\ueps$ of the microscopic model $\eqref{MicroscopicModel}$ fulfills the following a priori estimate for a constant $C>0$ independent of $\epsilon$
\begin{align*}
\|\partial_t \ueps\|_{L^2((0,T),\Heps')} + \|\ueps\|_{L^2((0,T),\Heps)} \le C.
\end{align*}
Especially, we obtain
\begin{align*}
\|\partial_t \ueps\|_{L^2((0,T),(\Hepsom)')} \le C\epsilon.
\end{align*}
\end{lemma}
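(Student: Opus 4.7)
The plan is to derive both bounds by testing the variational equation $\eqref{VariationalEquation}$ with $\peps = \ueps$ to obtain a standard energy estimate, and then to use this to control the dual-space norms of $\partial_t\ueps$.

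First I would exploit the Gelfand triple $\eqref{GelfandTriple}$ so that $\langle\partial_t\ueps,\ueps\rangle_{\Heps',\Heps}=\tfrac{1}{2}\tfrac{d}{dt}\|\ueps\|_{\Leps}^2$; the ellipticity constants $D^{\pm}>0$ and the coercivity assumption \ref{AssumptionDiffusion} on $D^M$ then produce on the left-hand side the coercive quadratic form
\begin{linenomath*}\begin{align*}
\sum_{\pm}D^{\pm}\|\nabla\ueps^{\pm}\|_{L^2(\oe^{\pm})}^{2}+c_0\,\epsilon\,\|\nabla\uepsm\|_{L^2(\osem)}^{2},
\end{align*}\end{linenomath*}
which together with $\|\ueps\|_{\Leps}^{2}$ controls $\|\ueps\|_{\Heps}^{2}$. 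On the right-hand side I would use the Lipschitz continuity from \ref{AssumptionReactionRateBulk}--\ref{AssumptionReactionRateSurfaceChannels}, so that $|f^{\pm}(s)|,|g(\cdot,\cdot,s)|,|h(\cdot,\cdot,s)|\le C(1+|s|)$ pointwise. The bulk reaction term is handled by Young's inequality in the usual way. The channel reaction term carries the prefactor $\foe$, but since $|\osem|=O(\epsilon)$ the ``$1$'' part contributes $O(1)$, while the linear part is $\tfrac{1}{\epsilon}\|\uepsm\|_{L^2(\osem)}^{2}$, which is precisely the $M$-component of $\|\ueps\|_{\Leps}^{2}$.

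The most delicate piece is the surface integral $(h_\epsilon(\uepsm),\ueps)_{\neps}$. Here I would apply the trace-Poincaré inequality $\eqref{TraceInequality}$ in its squared form
\begin{linenomath*}\begin{align*}
\|\peps^{M}\|_{L^2(\neps)}^{2}\le \tfrac{2C(\theta)^{2}}{\epsilon}\|\peps^{M}\|_{L^2(\osem)}^{2}+2\theta^{2}\epsilon\|\nabla\peps^{M}\|_{L^2(\osem)}^{2},
\end{align*}\end{linenomath*}
so that, up to the constant area $|\neps|=O(1)$, the surface term is bounded by the $\Leps$-norm squared plus a term $\theta^{2}\epsilon\|\nabla\uepsm\|_{L^2(\osem)}^{2}$ that can be absorbed into the coercive gradient term on the left by choosing $\theta$ small enough. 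Combining everything and using the bound $\foe\|u_i^{M}(\cdot,\cdot/\epsilon)\|_{L^2(\osem)}^{2}\le C$ from \ref{AssumptionInitialCondition}, Gronwall's lemma yields $\|\ueps\|_{L^{\infty}((0,T),\Leps)}+\|\ueps\|_{L^{2}((0,T),\Heps)}\le C$.

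For the dual bound on $\partial_t\ueps$ I would return to $\eqref{VariationalEquation}$ and estimate $\langle\partial_t\ueps,\peps\rangle_{\Heps',\Heps}$ for arbitrary $\peps\in\Heps$. Each of the diffusion and reaction terms is handled by Cauchy--Schwarz, taking care that the $\epsilon$-weights align with those in the inner products of $\Leps$ and $\Heps$: for instance, the channel gradient term reads $\epsilon(D^{M}(\cdot/\epsilon)\nabla\uepsm,\nabla\peps)_{\osem}\le C\sqrt{\epsilon}\|\nabla\uepsm\|_{L^2(\osem)}\cdot\sqrt{\epsilon}\|\nabla\peps\|_{L^2(\osem)}\le C\|\ueps\|_{\Heps}\|\peps\|_{\Heps}$, and similarly $\foe(g_\epsilon(\uepsm),\peps)_{\osem}$ factors as $(\tfrac{1}{\sqrt{\epsilon}}\|\,\cdot\,\|_{L^2(\osem)})^{2}$ plus a constant. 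The surface term uses $\|\peps^{M}\|_{L^2(\neps)}\le C\|\peps^{M}\|_{\Hepsom}\le C\|\peps\|_{\Heps}$, derived directly from $\eqref{TraceInequality}$. Squaring and integrating in time gives $\|\partial_t\ueps\|_{L^{2}((0,T),\Heps')}\le C$. Finally, the second inequality of the lemma follows immediately by inserting the first into $\eqref{EstimateRestrictionTimeDerivativeChannels}$. The main obstacle throughout is bookkeeping the various $\epsilon$-powers so that every term either lands in the coercive part of $\Heps$ or is absorbable for small $\theta$; no single estimate is hard, but a careless split would produce a non-absorbable $\epsilon^{-1}$ in the surface term.
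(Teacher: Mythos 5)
Your proposal is correct and follows essentially the same route as the paper: test with $\ueps$, use coercivity of $D^{\pm}$ and $D^M$, control the lateral boundary term via the trace inequality $\eqref{TraceInequality}$ with a small parameter $\theta$ to absorb the gradient contribution, apply Gronwall, then bound $\partial_t\ueps$ by testing against $\peps$ with $\|\peps\|_{\Heps}\le 1$ and conclude the second estimate from $\eqref{EstimateRestrictionTimeDerivativeChannels}$. The $\epsilon$-bookkeeping you describe (the $\foe$ prefactors matching the weighted $\Leps$- and $\Heps$-norms, and $|\osem|=O(\epsilon)$ absorbing the constant part of the Lipschitz bounds) is exactly the point the paper relies on, and you in fact spell out the time-derivative estimate in more detail than the paper does.
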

\begin{proof}
We test the variational equation $\eqref{VariationalEquation}$ with $\ueps$ to obtain with the coercivity of $D^M$
\begin{align}
\begin{aligned}\label{LemmaAprioriEstimatesInequality}
\frac{1}{2}\frac{d}{dt}\|\ueps\|^2_{\Leps} & + \sum_{\pm} D^{\pm} \|\nabla \ueps^{\pm}\|^2_{L^2(\oe^{\pm})} + c_0\epsilon \|\nabla \uepsm\|^2_{L^2(\osem)}
\\
&\le  \sum_{\pm} \big(f^{\pm} (\ueps^{\pm}),\ueps^{\pm}\big)_{\oe^{\pm}} + \foe \big(g_{\epsilon}(\uepsm),\uepsm\big)_{\osem} - \big(h_{\epsilon}(\uepsm),\uepsm \big)_{\neps}.
\end{aligned}
\end{align}
We only consider the boundary term on $N_{\epsilon}$ in more detail, since the other terms can be treated in a similar way. The Lipschitz-continuity of $h_{\epsilon}$ and the trace-inequality $\eqref{TraceInequality}$ imply for $\theta >0$ (remember $|N_{\epsilon}|\le C$)
\begin{align*}
\big|\big(h_{\epsilon}(\uepsm),\uepsm \big)_{\neps}\big| &\le C \|1 + \uepsm\|_{L^2(N_{\epsilon})} \|\uepsm\|_{L^2(N_{\epsilon})} \le C \big( 1 + \|\uepsm\|^2_{L^2(N_{\epsilon})}\big)
\\
&\le C(\theta) \left(1 + \foe \|\uepsm\|^2_{L^2(\osem)} \right) + \theta \epsilon \|\nabla \uepsm\|^2_{L^2(\osem)}.
\end{align*}
For $\theta>0$ small enough the last term can be absorbed from the left-hand side in the inequality $\eqref{LemmaAprioriEstimatesInequality}$. Integration with respect to time and the Gronwall inequality implies the estimate for $\ueps$ in the norm of $L^2((0,T),\Heps))$. For the estimate of the time-derivative we just test equation $\eqref{VariationalEquation}$ with $\peps \in \Heps$ and $\|\peps\|_{\Heps} \le 1$:

\begin{align*}
\langle \partial_t \ueps , \peps\rangle_{\Heps',\Heps} =& -\sum_{\pm} \big(D^{\pm} \nabla \ueps^{\pm},\nabla \peps\big)_{\oe^{\pm}} - \epsilon \left(D^M\left(\frac{\cdot}{\epsilon}\right)\nabla \uepsm,\nabla \peps\right)_{\osem}
\\
&+ \sum_{\pm} \big(f^{\pm} (\ueps^{\pm}),\peps\big)_{\oe^{\pm}} + \foe \big(g_{\epsilon}(\uepsm),\peps\big)_{\osem} - \big(h_{\epsilon}(\uepsm),\peps\big)_{\neps}
\\
\le& C \bigg( \sum_{\pm} \Vert \nabla \ueps^{\pm} \Vert_{L^2(\oe^{\pm})} \Vert \nabla \peps \Vert_{L^2(\oe^{\pm})} + \epsilon \Vert \nabla \uepsm \Vert_{L^2(\osem} \Vert \nabla \peps\Vert_{L^2(\osem)} 
\\
& + \sum_{\pm} \big(\Vert \peps\Vert_{L^1(\oe^{\pm})}  + \Vert \ueps^{\pm} \Vert_{L^2(\oe^{\pm})} \Vert \peps \Vert_{L^2(\oe^{\pm})} \big)+ \foe \Vert \peps \Vert_{L^1(\osem)}    
\\
&+ \Vert \uepsm \Vert_{L^2(\osem)} \Vert \peps \Vert_{L^2(\osem)}  + \Vert \peps \Vert_{L^1(\neps)} + \Vert \uepsm \Vert_{L^2(\neps)} \Vert \peps \Vert_{L^2(\neps)}
\bigg)
\\
\le& C \left(1 + \Vert \ueps \Vert_{\Heps} \right) \Vert \peps \Vert_{\Heps}
\\
\le& C \left(1 + \Vert \ueps \Vert_{\Heps} \right),
\end{align*}
where we used similar arguments as above and  again the trace inequality $\eqref{TraceInequality}$. This implies almost everywhere in $(0,T)$
\begin{align*}
\Vert \partial_t \ueps \Vert_{\Heps'} \le C \left(1 + \Vert \ueps \Vert_{\Heps} \right).
\end{align*}

Integration with respect to time and using the estimates already obtained at the beginning of the proof we obtain
\begin{align*}
\Vert \partial_t \ueps \Vert_{L^2((0,T),\Heps')} \le C .
\end{align*}

The last inequality in the Lemma follows from $\eqref{EstimateRestrictionTimeDerivativeChannels}$.
\end{proof}
To pass to the limit in the nonlinear terms of $\eqref{VariationalEquation}$, we need strong two-scale compactness results. Due to the critical scaling of the equation in the channels,  the two-scale limit $u_0^M$ of $\uepsm$ is depending on a macroscopic variable $\x \in \Sigma$ and a microscopic variable $y \in Z^{\ast}$, see Theorem \ref{MainTheoremConvergenceChannels}. Hence, usual compactness arguments, for example a direct application of the Aubin-Lions lemma, fail.  More precisely, extending the functions $\uepsm$ to the whole thin layer $\oem$, such that the a priori estimates from Lemma \ref{LemmaAprioriEstimates} remain valid, and transform $\oem$ to the fixed domain $\Sigma \times (-1,1)$ will not give a uniform bound for the sequence of the gradient with respect to $\epsilon$ and therefore the Aubin-Lions lemma is not applicable. To overcome this problem we use the unfolding operator for thin layers and apply a Kolmogorov-type compactness result to the unfolded sequence. This argument is based on error estimates for the difference of discrete shifts of the microscopic solution. We introduce the following notation:

For an arbitrary set $U\subset \R^n$ and a function $\veps :U\rightarrow \R$, we define for $l\in \Z^n$   the shifted function
\begin{align*}
\veps^l(x):= \veps(x + \epsilon l).
\end{align*} 
Here, if not stated otherwise, we extend $\veps$ by zero to $\R^n$. Then, we define the difference between the shifted function and the function itself by
\begin{align*}
\delta_l \veps(x) := \delta \veps(x) := \veps^l(x) - \veps(x) = \veps(x+\epsilon l) - \veps(x).
\end{align*}
If it is clear from the context we neglect the index $l$ and just write $\delta \veps$. Now, for $0< h \ll 1$ we define
\begin{align*}
\Sigma_h:= \{x \in \Sigma\, : \, \mathrm{dist}(\partial \Sigma,x)>h\},
\end{align*}
and
\begin{align*}
\oeh:= \oe \cap \big(\Sigma_h \times (-H,H)\big), \quad \oeh^{\pm} := \oeh \cap \oe^{\pm}.
\end{align*}
We emphasize that in the definition of $\oeh$ channels can be intersected by the boundary of $\Sigma_h \times (-H,H)$ and we would loose the Lipschitz-regularity of $\oeh$. Therefore we use the domain $\who_{\epsilon,h}$ defined in the following. Let
\begin{align*}
I_{\epsilon,h}:= \big\{\bar{k} \in \Z^{n-1} \, : \, \epsilon(Y +\bar{k} ) \subset \Sigma_h \big\}, \quad \whs_h:= \mathrm{int} \left(\bigcup_{\bar{k}\in I_{\epsilon,h}} \epsilon \big(\overline{Y} + \bar{k}\big)\right),
\end{align*}
and
\begin{align*}
\who_{\ast,\epsilon,h}^M:= \osem\cap \big(\whs_h \times (-\epsilon,\epsilon)\big),\quad \who^+_{\epsilon,h}:= \whs_h \times (\epsilon,H),\quad \who^-_{\epsilon,h}:= \whs_h \times (-H,-\epsilon).
\end{align*}
The top and the bottom of the channels in $\who_{\ast,\epsilon,h}$ are denoted by $\widehat{S}_{\ast,\epsilon,h}^{\pm}$, and the lateral boundary of the channels by $\widehat{N}_{\epsilon,h}$. Then we define
\begin{align*}
\who_{\epsilon,h}:= \who_{\epsilon,h}^+ \cup \who_{\epsilon,h}^- \cup \who_{\ast,\epsilon,h}^M \cup \widehat{S}_{\ast,\epsilon,h}^+ \cup \widehat{S}_{\ast,\epsilon,h}^-.
\end{align*}
On this domain we define the function space
\begin{align*}
\Hepsho := \left\{\veps \in H^1(\who_{\epsilon,h})\, : \, \veps|_{\partial \whs_h \times (-H,-\epsilon)} = \veps|_{\partial \whs_h \times (\epsilon,H)} = 0 \right\} \subset \Heps,
\end{align*}
where we consider $\Hepsho$ as a subset of $\Heps$ by extending functions from $\Hepsho$ by zero to the whole domain $\oe$. In a similar way we define
\begin{align*}
\Lepsh := L^2(\who_{\epsilon,h}) = L^2(\who_{\epsilon,h}^+) \times L^2(\who_{\ast,\epsilon,h}^M ) \times L^2(\who_{\epsilon,h}^-) \subset \Leps,
\end{align*}
what leads to the Gelfand-triple
\begin{align*}
\Hepsho \hookrightarrow \Lepsh \hookrightarrow \Hepsho'.
\end{align*}
We emphasize that on $\Lepsh$ we consider the same inner product as on $\Leps$, just by extending function in $\Lepsh$ by zero to the whole domain $\oe$.

\begin{lemma}\label{LemmaErrorEstimatesShifts}
Let $\ueps$ be the solution of the microscopic model $\eqref{MicroscopicModel}$. Then, for every $0<h\ll 1$, there exists a $C=C(h)>0$, such that for all $l \in \Z^{n-1} \times \{0\}$ with $ |\epsilon l|\ll h$ it holds that
\begin{align}
\begin{aligned}
\label{LemmaErrorEstimatesShiftsInequality}
\frac{1}{\sqrt{\epsilon}}\|\delta \ueps &\|_{L^{\infty}((0,T),L^2(\who_{\ast,\epsilon,2h}^M))} + \sqrt{\epsilon} \|\nabla \delta \ueps \|_{L^2((0,T)\times \who_{\ast,\epsilon,2h}^M)} 
\\
\le & C\left( \epsilon + \|\delta \ueps(0)\|_{\Lepsh} + \sum_{\pm} \|\delta \ueps^{\pm}\|_{L^2((0,T)\times \who_{\epsilon,h}^{\pm})} \right).
\end{aligned}
\end{align}
\end{lemma}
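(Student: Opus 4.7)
The plan is to establish \eqref{LemmaErrorEstimatesShiftsInequality} by a localized energy estimate applied to the shift difference $\delta \ueps = \ueps^l - \ueps$. The key observation is that, because $l \in \Z^{n-1}\times\{0\}$ and because $D^M$, $g$, and $h$ are $Y$-periodic in their fast variables, one has $D^M((x+\epsilon l)/\epsilon) = D^M(x/\epsilon)$, and $g_\epsilon, h_\epsilon$ are likewise unchanged by the discrete shift. Consequently, on any region where both $\ueps$ and $\ueps^l$ are defined (which, since $|\epsilon l|\ll h$, includes $\who_{\epsilon,h}$), the shifted function $\ueps^l$ satisfies exactly the same system \eqref{MicroscopicModel} as $\ueps$; hence $\delta \ueps$ solves the corresponding linear problem, whose right-hand sides are $f^\pm(\ueps^{\pm,l}) - f^\pm(\ueps^\pm)$ and $g_\epsilon(\ueps^{M,l}) - g_\epsilon(\ueps^M)$, and whose Neumann datum on $\widehat{N}_{\epsilon,h}$ is $h_\epsilon(\ueps^{M,l}) - h_\epsilon(\ueps^M)$.

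Next I introduce a horizontal cutoff $\eta \in C_c^\infty(\Sigma_h)$ with $\eta \equiv 1$ on $\Sigma_{2h}$, $0\le \eta \le 1$, and $\|\nabla \eta\|_\infty \le C/h$, and test the variational form of the difference equation with $\peps := \eta^2 \delta \ueps$, extended by zero outside $\who_{\epsilon,h}$. Since $\peps \in \Hepsho \subset \Heps$, the duality pairing with $\partial_t \delta \ueps$ is well-defined, and the standard chain rule for the Gelfand triple \eqref{GelfandTriple} gives $\tfrac12 \tfrac{d}{dt}\|\eta \delta \ueps\|_{\Leps}^2$. The diffusion terms yield the coercive quantity $\sum_{\pm} D^\pm \|\eta \nabla \delta \ueps^\pm\|_{L^2}^2 + c_0 \epsilon \|\eta \nabla \delta \uepsm\|_{L^2}^2$ plus cross terms of the form $(D \nabla \delta \ueps, 2\eta \nabla\eta\, \delta \ueps)$, supported where $\nabla \eta \neq 0$.

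The cross terms are treated by Young's inequality. In the bulk they contribute an absorbable $\theta \|\eta \nabla \delta \ueps^\pm\|^2$ plus $C(\theta) h^{-2}\|\delta \ueps^\pm\|_{L^2(\who_{\epsilon,h}^\pm)}^2$, which matches the bulk term on the right of \eqref{LemmaErrorEstimatesShiftsInequality} after time integration. In the channel they contribute $\theta \epsilon \|\eta \nabla \delta \uepsm\|^2 + C(\theta) \epsilon h^{-2}\|\delta \uepsm\|_{L^2(\osem)}^2$; the a priori bound $\|\uepsm\|_{L^2(\osem)}^2 \le \epsilon \|\ueps\|_{\Leps}^2 \le C\epsilon$ from Lemma \ref{LemmaAprioriEstimates} then converts the second piece into a $C(h)\epsilon^2$ remainder, producing the $C\epsilon$ term on the right after taking a square root. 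The reaction terms are controlled by Lipschitz continuity and produce $C\|\eta \delta \ueps^\pm\|^2$ and $C\epsilon^{-1}\|\eta \delta \uepsm\|^2$, both absorbable by a Gronwall argument. For the boundary term on $\widehat{N}_{\epsilon,h}$, Lipschitz continuity of $h$ combined with the scaled trace inequality \eqref{TraceInequality} yields a bound of the form $C(\theta)\epsilon^{-1}\|\eta \delta \uepsm\|_{L^2}^2 + \theta \epsilon \|\nabla (\eta \delta \uepsm)\|_{L^2}^2$, whose gradient piece splits into the absorbable $\theta \epsilon \|\eta \nabla \delta \uepsm\|^2$ and a commutator remainder $\theta \epsilon h^{-2}\|\delta \uepsm\|_{L^2}^2 \le C(h) \epsilon^2$.

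Summing all contributions, choosing $\theta$ small enough to absorb the gradient pieces and applying Gronwall's lemma on $(0,T)$ produces a bound on $\|\eta \delta \ueps\|_{L^\infty((0,T),\Leps)}^2 + \epsilon \|\eta \nabla \delta \uepsm\|_{L^2((0,T)\times \osem)}^2$ by $C(h)\bigl(\epsilon^2 + \|\delta \ueps(0)\|_{\Lepsh}^2 + \sum_{\pm} \|\delta \ueps^\pm\|_{L^2((0,T)\times \who_{\epsilon,h}^\pm)}^2\bigr)$. Restricting the left-hand side to $\who_{\ast,\epsilon,2h}^M$, where $\eta\equiv 1$, using $\|w\|_{\Leps}^2 = \epsilon^{-1}\|w\|_{L^2(\osem)}^2$ on the channel, and taking a square root gives \eqref{LemmaErrorEstimatesShiftsInequality}. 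The main obstacle is the tight bookkeeping of the various powers of $\epsilon$: the $\epsilon$-scaled diffusion, the $\epsilon^{-1}$ in the $\Leps$-inner product, and the $\epsilon^{-1/2}$ growth of \eqref{TraceInequality} must cancel precisely so that the Neumann surface term on $\widehat{N}_{\epsilon,h}$ produces only absorbable pieces and an $O(\epsilon^2)$ remainder; any slackness here would spoil the estimate.
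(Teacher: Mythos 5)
Your proposal is correct and follows essentially the same route as the paper's proof: exploiting the $\epsilon\Z^{n-1}$-shift invariance of $D^M$, $g_\epsilon$ and $h_\epsilon$ to write a difference equation for $\delta\ueps$ on $\who_{\epsilon,h}$, testing with $\eta^2\delta\ueps$ for a cutoff $\eta$ supported in the $h$-interior, handling the cutoff commutator and the lateral boundary term via Young's inequality, the scaled trace estimate \eqref{TraceInequality} and the a priori bounds of Lemma \ref{LemmaAprioriEstimates}, and closing with Gronwall. The only (immaterial) deviation is in how the Young weights are split in the channel cross-term, where you absorb the non-gradient piece directly into the $O(\epsilon^2)$ remainder rather than partially into the Gronwall term.
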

\begin{proof}
We have $\ueps|_{\who_{\epsilon,h}}, \, \ueps^l|_{\who_{\epsilon,h}} \in H^1((0,T),\Hepsho')$ and for all $\pepsh \in \Hepsho$ it holds almost everywhere in $(0,T)$ that
\begin{align*}
\langle \partial_t &\delta \ueps , \pepsh\rangle_{\Hepsho',\Hepsho} \hspace{-2.23pt}+ \sum_{\pm} \big(D^{\pm} \nabla \delta \ueps^{\pm},\nabla \pepsh\big)_{\who_{\epsilon,h}^{\pm}} 
\hspace{-2.23pt}+ \epsilon \left(D^M\left(\fxe\right) \nabla \delta \uepsm , \nabla \pepsh\right)_{\who_{\ast,\epsilon,h}^M}
\\
&= \sum_{\pm} \left(f^{\pm}\big((\ueps^{\pm})^l\big) - f^{\pm}(\ueps^{\pm}) , \pepsh\right)_{\who_{\epsilon,h}^\pm} 
+ \foe \left(g_{\epsilon}\big((\uepsm)^l\big) - g_{\epsilon}(\uepsm),\pepsh\right)_{\who_{\ast,\epsilon,h}^M} 
\\
& - \left(h_{\epsilon}\big((\uepsm)^l\big)- h_{\epsilon}(\uepsm),\pepsh\right)_{\widehat{N}_{\epsilon,h}}.
\end{align*}
Now, let $\eta \in C_0^{\infty}\big(\whs_h\big)$ be a cut-off function with $0\le \eta \le 1$ and $\eta = 1 $ in $\whs_{2h}$, and $\|\nabla_{\x} \eta \|_{L^{\infty}(\whs_h)} \le C=C(h)$. 
As a test-function in the equation above we choose $\pepsh=\eta^2 \delta \ueps$ and obtain with the coercivity of $D^M$
\begin{align*}
\frac12 \frac{d}{dt} \|\eta \delta \ueps & \|^2_{\Lepsh} + \sum_{\pm} D^{\pm} \big\|\eta \nabla \delta \ueps^{\pm}\big\|^2_{L^2(\who_{\epsilon,h}^{\pm})} + c_0 \epsilon \big\|\eta \nabla \delta \uepsm\big\|^2_{L^2(\who_{\ast,\epsilon,h}^M)}
\\
\le & \sum_{\pm} \left\{-2 \left(D^{\pm}\nabla \delta \ueps^{\pm}, \eta \delta \ueps^{\pm} \nabla \eta\right)_{\who_{\epsilon,h}^{\pm}} + \left(\delta f^{\pm}(\ueps^{\pm}), \eta^2 \delta \ueps^{\pm}\right)_{\who_{\epsilon,h}^{\pm}} \right\}
\\
&-2 \epsilon \left(D^M\left(\fxe\right) \nabla \delta \uepsm , \eta \delta \uepsm \nabla \eta \right)_{\who_{\ast,\epsilon,h}^M} + \foe \left(\delta g_{\epsilon}(\uepsm) , \eta^2 \delta \uepsm \right)_{\who_{\ast,\epsilon,h}^M} 
\\
&- \left(\delta h_{\epsilon} (\uepsm) ,\eta^2 \delta \uepsm\right)_{\widehat{N}_{\epsilon,h}}
\\
=&:  \sum_{\pm} \left[I_{\epsilon,1}^{\pm} + I_{\epsilon,2}^{\pm}\right] + \sum_{j=3}^5 I_{\epsilon,j}^M.
\end{align*}
Integration with respect to time gives us for almost every $t \in (0,T)$
\begin{align*}
 \|\eta \delta \ueps(t) &\|^2_{\Lepsh} + \sum_{\pm} \|\eta \nabla \delta  \ueps^{\pm} \|^2_{L^2((0,t)\times \who_{\epsilon,h}^{\pm})} +  \epsilon \|\eta \nabla \delta \uepsm\|^2_{L^2((0,t)\times \who_{\ast,\epsilon,h}^M)}
\\
&\le C \int_0^t \left\{\sum_{\pm} \left[I_{\epsilon,1}^{\pm} + I_{\epsilon,2}^{\pm}\right] + \sum_{j=3}^5 I_{\epsilon,j}^M\right\} dt + \sum_{\pm} I_{\epsilon,6}^{\pm} + I_{\epsilon,7}^M,
\end{align*}
with 
\begin{align*}
I_{\epsilon,6}^{\pm} := \|\eta \delta \ueps^{\pm}(0)\|^2_{L^2(\who_{\epsilon,h}^{\pm})}, \quad I_{\epsilon,7}^M:= \foe \|\eta \delta \uepsm (0)\|^2_{L^2(\who_{\ast,\epsilon,h}^M)}.
\end{align*}
We start to estimate the term $I_{\epsilon,1}^{\pm}$. For $\theta >0$ we obtain
\begin{align*}
\int_0^t I_{\epsilon,1}^{\pm} dt &\le C\|\eta \nabla \delta \ueps^{\pm}\|_{L^2((0,t)\times\who_{\epsilon,h}^{\pm})} \|\delta \ueps^{\pm}\|_{L^2((0,t)\times \who_{\epsilon,h}^{\pm})} 
\\
&\le C(\theta) \|\delta \ueps^{\pm}\|^2_{L^2((0,t)\times\who_{\epsilon,h}^{\pm})}  + \theta \|\eta \nabla \delta \ueps^{\pm}\|^2_{L^2((0,t)\times\who_{\epsilon,h}^{\pm})},
\end{align*}
for a constant $C(\theta)>0$ depending on $\theta$. In a similar way and using the a priori estimate from Lemma \ref{LemmaAprioriEstimates}, we obtain for $I_{\epsilon,3}^M$
\begin{align*}
\int_0^t I_{\epsilon,3}^M dt &\le C \epsilon \|\nabla \delta \uepsm \|_{L^2((0,t)\times \who_{\ast,\epsilon,h}^M)} \|\eta  \delta \uepsm\|_{L^2((0,t)\times \who_{\ast,\epsilon,h}^M)}
\\
&\le C \left(\epsilon^3 \|\nabla \delta \uepsm\|^2_{L^2((0,t)\times \who_{\ast,\epsilon,h}^M)} + \foe \|\eta \delta \uepsm\|^2_{L^2((0,t)\times \who_{\ast,\epsilon,h}^M)}\right)
\\
&\le C \left( \epsilon^2 + \foe \|\eta \delta \uepsm\|^2_{L^2((0,t)\times \who_{\ast,\epsilon,h}^M)}\right).
\end{align*}
For $I_{\epsilon,5}^M$ we use the Lipschitz-continuity of $h_{\epsilon}$, the trace-inequality $\eqref{TraceInequality}$, and again the a priori estimate from Lemma \ref{LemmaAprioriEstimates} to obtain for $\theta >0$
\begin{align*}
&\int_0^t  I_{\epsilon,5}^M dt \le C \|\eta \delta \uepsm\|^2_{L^2((0,t)\times \widehat{N}_{\epsilon,h})} 
\\
&\le  \frac{C(\theta)}{\epsilon} \|\eta \delta \uepsm\|^2_{L^2((0,t)\times \who_{\ast,\epsilon,h}^M)} + C\epsilon \|\delta \uepsm\|^2_{L^2((0,t)\times \who_{\ast,\epsilon,h}^M)} + \theta \epsilon \|\eta \nabla \delta \uepsm\|^2_{L^2((0,t)\times \who_{\ast,\epsilon,h}^M)}
\\
&\le \frac{C(\theta)}{\epsilon} \|\eta \delta \uepsm\|^2_{L^2((0,t)\times \who_{\ast,\epsilon,h}^M)} + C\epsilon^2 + \theta \epsilon \|\eta \nabla \delta \uepsm\|^2_{L^2((0,t)\times \who_{\ast,\epsilon,h}^M)}.
\end{align*}
For $I_{\epsilon,2}^{\pm}$ and $I_{\epsilon,4}^M$ we obtain directly from the Lipschitz-continuity of $f^{\pm}$ and $g_{\epsilon}$
\begin{align*}
\int_0^t \sum_{\pm} I_{\epsilon,2}^{\pm} + I_{\epsilon,4}^M dt \le C \|\delta \ueps\|^2_{L^2((0,t),\Lepsh)}.
\end{align*}
Choosing $\theta>0$ small enough, the desired result follows from the Gronwall-inequality.
\end{proof}

\begin{remark}
The error term $I_{\epsilon,1}^{\pm}$ arises due to the cut-off function $\eta$ in the proof of Lemma \ref{LemmaErrorEstimatesShifts}. Hence, with the method used above we are not able to get rid of the norms of $\delta \ueps^{\pm}$ on the right-hand side of inequality $\eqref{LemmaErrorEstimatesShiftsInequality}$. For specific boundary conditions, like   zero Dirichlet-boundary conditions, or in case of a rectangle $\Sigma$, zero Neumann-boundary conditions  or periodic boundary conditions on the lateral boundary, it is easily possible to extend the solution $\ueps$ in $\x$-direction and obtain an estimate of the form
\begin{align*}
\|\delta \ueps\|_{L^2((0,T),\Heps)} \le C \left(\epsilon + \|\delta \ueps(0)\|_{\Leps}\right).
\end{align*}
However, the interior estimate in Lemma \ref{LemmaErrorEstimatesShifts} is enough 
to obtain strong two-scale compactness results in the channel domain and the method presented in the proof has the advantage that it is applicable for more general boundary conditions.
\end{remark}

\section{Two-scale convergence and the unfolding operator for thin channels}
\label{Sect_Two_scale_convergence}

In this section we define the two-scale convergence for thin channels and give some weak two-scale compactness results based on a priori estimates in $L^2((0,T),\Heps)$. Further, we derive strong two-scale convergence results based on error estimates for the discrete shifts as in Lemma \ref{LemmaErrorEstimatesShifts}. Therefore, we make use of the unfolding operator and a Kolmogorov-type compactness result. We start with the definition of the two-scale convergence for channels, see also \cite{BhattacharyaGahnNeussRadu}.

\begin{definition}
Let $p\in [1,\infty)$ and $p'$ the dual exponent of $p$. 
\begin{enumerate}
[label = (\roman*)]
\item  We say the sequence $\veps \in L^p((0,T) \times \osem)$ converges (weakly) in the two-scale sense to a limit function $v_0\in  L^p((0,T) \times \Sigma \times Z^{\ast})$, if 
\begin{align*}
\lim_{\epsilon \to 0} \frac{1}{\epsilon} \int_{0}^{T} \int _{\osem}& \veps(t,x) \psi \left(t,\x,\frac{x}{\epsilon}\right) \,dx \,dt
\\
&= \int_{0}^{T} \int _{\Sigma} \int _{Z^{\ast}} v_0(t,\x,y)\psi(t,\x,y) \,dy \,d\x \,dt,
\end{align*} 
for all $\psi \in  L^{p'}((0,T) \times \Sigma, C_{\per}( \overline{Z^{\ast}}))$. The sequence converges strongly in the two-scale sense (in $L^p$) if it holds that
\begin{align*}
\lim_{\epsilon \to 0} \epsilon^{-\frac{1}{p}} \|\veps\|_{L^p((0,T)\times \osem)} = \|v_0\|_{L^p((0,T)\times \Sigma \times Z^{\ast})}.
\end{align*}
\item We say the sequence $\veps \in L^p((0,T) \times \neps)$ converges (weakly) in the two-scale sense to a limit function $v_0(t,\x,y) \in  L^p((0,T) \times \Sigma \times N)$ on $\neps$, if  
\begin{align*}
\lim_{\epsilon \to 0} \int_{0}^{T} \int _{\neps} \hspace{-0.31em} \veps(t,x) \psi \left(t,\x,\frac{x}{\epsilon}\right) \,d\sigma \,dt= \int_{0}^{T} \int _{\Sigma} \int _{N} v_0(t,\x,y)\psi(t,\x,y) \,d\sigma_y \,d\x \,dt,
\end{align*} 
for all $\psi \in  L^{p'}((0,T), C(\overline{\Sigma}, C_{\per}( \overline{N})))$. The sequence converges strongly in the two-scale sense (in $L^p$) on $\neps$ if it holds that
\begin{align*}
\lim_{\epsilon \to 0} \|\veps\|_{L^p((0,T)\times \neps)} = \|v_0 \|_{L^p((0,T)\times \Sigma \times N)}.
\end{align*}
\end{enumerate}
We just say a sequence converges in the two-scale sense, if it converges in the two-scale sense in $L^p$.
\end{definition}

In the following Lemma we give some weak two-scale compactness results in the microscopic channels, based on a priori estimates of the microscopic solution.

\begin{lemma}\label{LemmaTSWeakCompactness}
Let $p\in (1,\infty)$ and $p'$ the dual exponent of $p$.
\begin{enumerate}[label = (\roman*)]
\item\label{TSWeakCompactnessBasicConvergence} Let $\veps$ be a sequence of functions in $L^p((0,T) \times \osem)$ such that 
\begin{align*} 
\epsilon^{-\frac{1}{p}} \| \veps \|_{L^p((0,T) \times \osem)} \leq C. 
\end{align*}
 Then, there exists  $v_0 \in  L^p((0,T) \times \Sigma \times Z^{\ast})$ such that, up to a subsequence, $\veps$ two-scale converges to $v_0$.
\item\label{TSCompactnessGradient} Let $\veps $  be a sequence of functions in $ L^p((0,T), W^{1,p}(\osem))$ such that
\begin{align*}
\epsilon^{-\frac{1}{p}} \|\veps \|_{L^p((0,T)\times \osem)} + \epsilon^{\frac{1}{p'}} \|\nabla \veps\|_{L^p((0,T)\times \osem)} \le C.
\end{align*}
Then, there exists $v_0 \in L^p((0,T)\times \Sigma , W^{1,p}(Z^{\ast}))$, such that up to a subsequence $\veps \rightarrow v_0$ and $\epsilon\nabla \veps \rightarrow \nabla_y v_0$ in the two-scale sense.
\item\label{TSWeakCompactnessBasicConvergenceSurface} Let $\veps $ be a sequence of functions in $L^p((0,T)\times \neps)$ such that
\begin{align*}
\|\veps\|_{L^p((0,T)\times \neps)} \le C.
\end{align*}
Then, there exists $v_0 \in L^p((0,T)\times \Sigma \times N)$, such that $\veps \rightarrow v_0$ in the two-scale sense on $\neps$.
\end{enumerate}
\end{lemma}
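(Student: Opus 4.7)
The plan for all three parts is to set up each quantity as a linear functional on the appropriate test-function space and then extract a weakly-$\ast$ convergent subsequence, using that these spaces are separable. Throughout, the key preliminary fact is a ``test-function unfolding'' identity: for $\psi \in L^{p'}((0,T)\times\Sigma,C_{\per}(\overline{Z^{\ast}}))$,
\begin{linenomath*}\begin{align*}
\lim_{\epsilon\to 0}\,\foe \int_0^T \int_{\osem} \left|\psi\!\left(t,\x,\fxe\right)\right|^{p'}\!dx\,dt \;=\; \int_0^T\!\!\int_{\Sigma}\!\!\int_{Z^{\ast}} |\psi(t,\x,y)|^{p'} dy\,d\x\,dt,
\end{align*}\end{linenomath*}
which I would establish by a Riemann-sum argument: splitting $\osem$ into the cells $\epsilon(Z^{\ast}+(\bar{k},0))$, changing variables $y=x/\epsilon$ on each cell, and using the uniform continuity of $\psi$ in $\x$ to replace $\psi(t,\x,y)$ by $\psi(t,\epsilon \bar{k},y)$ up to an error vanishing with $\epsilon$. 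The analogous identity on $\neps$ (without the $1/\epsilon$ prefactor, since $|\neps|$ stays bounded) is shown the same way using the surface measure.

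For part \ref{TSWeakCompactnessBasicConvergence}, the functional $L_\epsilon(\psi):=\foe\int_0^T\int_{\osem}\veps\,\psi(t,\x,x/\epsilon)\,dx\,dt$ is bounded by $\epsilon^{-1/p}\|\veps\|_{L^p}\cdot(\epsilon^{-1/p'}\|\psi(\cdot,\cdot/\epsilon)\|_{L^{p'}})$, and the second factor is uniformly bounded by the unfolding identity. Since $L^{p'}((0,T)\times\Sigma,C_{\per}(\overline{Z^{\ast}}))$ is separable, a diagonal extraction gives a subsequence along which $L_\epsilon(\psi)\to L(\psi)$ for every $\psi$. The limit $L$ is bounded on that space, and by density of $L^{p'}((0,T)\times\Sigma,C_{\per}(\overline{Z^{\ast}}))$ in $L^{p'}((0,T)\times\Sigma\times Z^{\ast})$ together with reflexivity (here $p\in(1,\infty)$), one identifies $L$ with an element $v_0\in L^p((0,T)\times\Sigma\times Z^{\ast})$, which is the desired two-scale limit. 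Part \ref{TSWeakCompactnessBasicConvergenceSurface} is obtained by exactly the same scheme applied to the surface test-function space.

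For part \ref{TSCompactnessGradient}, I would apply \ref{TSWeakCompactnessBasicConvergence} twice: once to $\veps$ to get a two-scale limit $v_0\in L^p((0,T)\times\Sigma\times Z^{\ast})$, and once, componentwise, to $\epsilon\nabla\veps$ (which satisfies the same bound) to get a two-scale limit $\xi_0\in L^p((0,T)\times\Sigma\times Z^{\ast})^n$ along a further subsequence. To identify $\xi_0=\nabla_y v_0$, I would test against $\psi(t,\x,y)=\epsilon\,\varphi(t,\x)\Psi(y)$ with $\varphi\in C_0^\infty((0,T)\times\Sigma)$ and $\Psi\in C^\infty_{\per}(\overline{Z^{\ast}})^n$ compactly supported away from $\partial Z^{\ast}\cap\partial Z$; integrating by parts in $x$ yields an $\epsilon\,\mathrm{div}_y\Psi$ term plus an $O(\epsilon)$-bulk contribution from the $\x$-gradient of $\varphi$, and passing to the two-scale limit gives
\begin{linenomath*}\begin{align*}
\int_0^T\!\!\int_\Sigma\!\!\int_{Z^{\ast}} \xi_0\cdot\Psi\,\varphi\,dy\,d\x\,dt \;=\; -\int_0^T\!\!\int_\Sigma\!\!\int_{Z^{\ast}} v_0\,\mathrm{div}_y\Psi\,\varphi\,dy\,d\x\,dt,
\end{align*}\end{linenomath*}
identifying $\xi_0=\nabla_y v_0$ in the distributional sense on $Z^{\ast}$ for a.e.\ $(t,\x)$.

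The step I expect to need the most care is the test-function unfolding identity, because the channel domain $\osem$ is not a full tensor product of $\Sigma$ with $(-\epsilon,\epsilon)$ but only its intersection with the union of $\epsilon$-cells that fit in $\oem$; the boundary strip near $\partial\Sigma$ contributes an $O(\epsilon)$ volume and must be handled separately so that no $1/\epsilon$-blowup of the error occurs. Once this identity is in hand, the three compactness statements follow from the functional-analytic scheme above with only routine checks.
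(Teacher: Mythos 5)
Your proposal is correct and follows essentially the same route as the paper: for parts \ref{TSWeakCompactnessBasicConvergence} and \ref{TSWeakCompactnessBasicConvergenceSurface} the paper simply invokes \cite[Theorem 4.4]{BhattacharyaGahnNeussRadu} (whose proof is precisely your scheme of uniform boundedness of the functionals $L_\epsilon$ on a separable test space via the admissibility/unfolding identity, diagonal extraction, and identification of the limit with an element of $L^p$ by density and duality), and for part \ref{TSCompactnessGradient} the paper likewise applies \ref{TSWeakCompactnessBasicConvergence} to $\veps$ and to $\epsilon\nabla\veps$ and identifies $\xi_0=\nabla_y v_0$ by the same integration by parts. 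The one adjustment: in part \ref{TSCompactnessGradient} the test function $\Psi$ should be compactly supported in the open set $Z^{\ast}$ itself (as the paper takes $\Phi\in C_0^{\infty}((0,T)\times\Sigma\times Z^{\ast})^n$), not merely supported away from $\partial Z^{\ast}\cap\partial Z$, since otherwise the integration by parts over $\osem$ produces boundary terms on $N_\epsilon$ and $S_{\ast,\epsilon}^{\pm}$ that do not vanish; full compact support is exactly what the claimed distributional identity on $Z^{\ast}$ requires, so this is a one-word fix rather than a gap.
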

\begin{proof}
Statements \ref{TSWeakCompactnessBasicConvergence} and \ref{TSWeakCompactnessBasicConvergenceSurface} were shown in \cite[Theorem 4.4]{BhattacharyaGahnNeussRadu} for $p=2$. The general case works the same lines. To prove \ref{TSCompactnessGradient} we notice that 
\begin{align*}
\epsilon^{-\frac{1}{p}} \big(\|\veps\|_{L^p((0,T) \times \osem)} + \|\epsilon\nabla \veps\|_{L^p((0,T) \times \osem)}\big) \le C.
\end{align*}
Hence, there exist $v_0 \in L^p((0,T)\times \Sigma \times Z^{\ast})$ and $\xi_0 \in L^p((0,T)\times \Sigma \times Z^{\ast})^n$, such that up to a subsequence
\begin{align*}
\veps &\rightarrow v_0 &\mbox{ in the two-scale sense}&,
\\
\epsilon \nabla \veps &\rightarrow \xi_0 &\mbox{ in the two-scale sense}&.
\end{align*}
By integration by parts we obtain for all $\Phi \in C^{\infty}_0((0,T)\times \Sigma \times Z^{\ast})^n$
\begin{align*}
\int_0^T \int_{\Sigma} \int_{Z^{\ast}} v_0 &\nabla_y \cdot \Phi (t,\x,y) dy d\x dt 
\\
&= \lim_{\epsilon\to 0} \int_0^T \int_{\osem}\veps \left[ \epsilon \nabla_{\x} \cdot \Phi\left(t,\x,\fxe\right) + \nabla_y \cdot \Phi\left(t,\x,\fxe\right) \right] dx dt
\\
&= - \lim_{\epsilon\to 0} \epsilon \int_0^T\int_{\osem} \nabla \veps \cdot \Phi\left(t,\x,\fxe\right) dx dt 
\\
&= -\int_0^T \int_{\Sigma} \int_{Z^{\ast}} \xi_0 \cdot \Phi (t,\x,y)dy d\x dt,
\end{align*}
which yields the desired result.
\end{proof}

Lemma \ref{LemmaTSWeakCompactness} and the a priori estimates for the microscopic solution from Lemma \ref{LemmaAprioriEstimates} are enough to pass to the limit in the linear terms in the channels $\osem$ in the variational equation $\eqref{VariationalEquation}$. To pass to the limit in the nonlinear terms, we need strong two-scale convergence. To establish these convergence results, we use the unfolding operator for channels defined below. This operator is closely related to the unfolding operator in thin domains, see for example \cite{NeussJaeger_EffectiveTransmission} and \cite{CioranescuDamlamianGrisoOnofrei2008}.

\begin{definition}
Let $(G_{\epsilon},G) \in \{(\osem,Z^{\ast}),(\neps,N)\}$. Then for $p\in[1,\infty)$ we define the unfolding operator
\begin{align*}
\te : L^p((0,T)\times G_{\epsilon}) \rightarrow L^p((0,T) \times \Sigma \times G),
\\
\te \veps(t,\x,y)= \veps\left(t,\epsilon\left(\left[\frac{\x}{\epsilon}\right],0\right) + \epsilon y\right),
\end{align*}
where $[\cdot]$ denotes the integer part of $\cdot$.
\end{definition}
Here, for an easier notation, we use the same notation for the unfolding operator for different domains of definition as for the usual unfolding operator for domains defined in \cite{Cioranescu_Unfolding1}. It should be clear from the context in which sense it has to be understood.  Further it makes sense to use the same notation for the unfolding operator on $\osem$ and $\neps$, since the unfolding operator commutes with the trace operator in the following sense: For $\veps \in L^p((0,T),W^{1,p}(\osem))$ it holds that
\begin{align*}
\te (\veps)|_{N} = \te\big(\veps|_{\neps}\big).
\end{align*}
We summarize some properties of $\te$:
\begin{lemma}\label{LemmaPropertiesUnfoldingOperator}
Let $p\in [1,\infty)$. Then it holds that:
\begin{enumerate}
[label = (\roman*)]
\item For $\veps,  \in L^p((0,T) \times \osem)$ and $\weps \in L^{p'}((0,T)\times \osem)$, where $p'$ denotes the dual exponent of $p$, we have
\begin{align*}
(\te \veps , \te \weps)_{(0,T)\times \Sigma \times Z^{\ast}} &= \foe(\veps,\weps)_{(0,T)\times \osem},
\\
\|\te \veps \|_{L^p((0,T)\times \Sigma \times Z^{\ast})} &= \epsilon^{-\frac{1}{p}} \|\veps \|_{L^p((0,T)\times \osem)},
\end{align*}
and for $\veps \in L^p((0,T),W^{1,p}(\osem))$ it holds that
\begin{align*}
\nabla_y \te \veps = \epsilon \te \nabla \veps.
\end{align*}
\item For $\veps \in L^p((0,T)\times \neps)$ and $\weps \in L^{p'}((0,T)\times \neps)$, where $p'$ denotes the dual exponent of $p$, we have
\begin{align*}
(\te \veps, \te \weps)_{(0,T)\times \Sigma \times N} &= (\veps,\weps)_{(0,T)\times \neps},
\\
\|\te \veps \|_{L^p((0,T)\times \Sigma \times N )} &= \|\veps\|_{L^p((0,T)\times \neps)}.
\end{align*}
\end{enumerate}
\end{lemma}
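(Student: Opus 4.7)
The plan is to prove both parts by direct computation, reducing each identity to the periodic cell decomposition of $\osem$ (resp.\ $\neps$) followed by an affine change of variables within each cell. No approximation or density argument is needed; everything follows from the pointwise definition of $\te$ applied cell by cell.

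For part (i), I exploit the disjoint decomposition $\osem = \bigcup_{\bar{k}\in I_\epsilon}\epsilon(Z^\ast+(\bar{k},0))$ (up to a null set). On each cell the substitution $x = \epsilon(\bar{k},0)+\epsilon y$ has Jacobian $\epsilon^n$, which yields
\[
\int_{\epsilon(Z^\ast+(\bar{k},0))} \veps(x)\,\weps(x)\, dx = \epsilon^n \int_{Z^\ast} \veps\bigl(\epsilon(\bar{k},0)+\epsilon y\bigr)\,\weps\bigl(\epsilon(\bar{k},0)+\epsilon y\bigr)\, dy.
\]
On the other hand, for $\bar{x}\in \epsilon(Y+\bar{k})$ one has $[\bar{x}/\epsilon]=\bar{k}$, so $\te \veps(\bar{x},y)=\veps(\epsilon(\bar{k},0)+\epsilon y)$ is constant in $\bar{x}$ on this subset; integrating in $\bar{x}$ over $\epsilon(Y+\bar{k})$ contributes the factor $\epsilon^{n-1}$. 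Summing over $\bar{k}\in I_\epsilon$ (with $\veps,\weps$ extended by zero outside $\osem$, as is standard for the unfolding operator), integrating in $t$, and comparing the two expressions produces the factor $\epsilon^n/\epsilon^{n-1}=\epsilon$ that is precisely what the prefactor $\foe$ on the right-hand side of the first identity demands. The $L^p$-norm identity is the same computation applied to $|\veps|^p$.

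For the gradient formula, on each cell $\epsilon(Z^\ast+(\bar{k},0))$ the chain rule applied to the map $y\mapsto \veps(\epsilon(\bar{k},0)+\epsilon y)$ produces $\nabla_y\te\veps = \epsilon\,\te(\nabla\veps)$ in the sense of the Sobolev chain rule; assembling the cells gives the identity in $L^p((0,T)\times\Sigma\times Z^\ast)^n$. Observe that $\te\veps$ is piecewise constant in $\bar{x}$ at resolution $\epsilon$, so no $\nabla_{\bar{x}}$ contribution arises, which is consistent with the lemma only asserting a formula for $\nabla_y$.

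Part (ii) follows the same two-step recipe applied to the $(n-1)$-dimensional surface $\neps = \bigcup_{\bar{k}}\epsilon(N+(\bar{k},0))$. The surface Jacobian on $\epsilon(N+(\bar{k},0))$ is $\epsilon^{n-1}$, which exactly matches the $\epsilon^{n-1}$ coming from integration over $\epsilon(Y+\bar{k})\subset\Sigma$; the two scaling factors cancel and no $\foe$ is needed. The compatibility of $\te$ with the trace operator mentioned just before the lemma is automatic from this construction, since the same affine substitution carries traces of $\veps$ on $\neps$ to traces of $\te\veps$ on $N$. The only real ``obstacle'' in the proof is bookkeeping: tracking the Jacobians to distinguish the bulk case (extra factor $\epsilon$) from the surface case (no extra factor), and noting that the boundary strip of $\Sigma$ not covered by $\bigcup_{\bar{k}\in I_\epsilon}\epsilon(Y+\bar{k})$ carries no content because $\te\veps$ vanishes there by the zero-extension convention.
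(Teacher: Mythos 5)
Your argument is correct and is exactly the computation the paper refers to: its proof of this lemma consists of the single remark that ``these results are obtained by a simple calculation'' with a pointer to \cite{Cioranescu_Unfolding1}, and your cell-by-cell change of variables, with the Jacobian bookkeeping that produces the extra factor $\foe$ in the bulk case but not in the surface case and the zero-extension handling of the boundary strip of $\Sigma$, is that calculation written out. No gap.
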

\begin{proof}
These results are obtained by a simple calculation. For the main ideas of the proof see \cite{Cioranescu_Unfolding1}.
\end{proof}
We have the following relation between the two-scale convergence and the unfolding operator.

\begin{lemma}\mbox{}
\label{LemmaAequivalenzUnfoldingTSConvergence}
\begin{enumerate}
[label = (\roman*)]
\item The sequence $\veps \in L^p((0,T)\times \osem)$ for $p \in (1,\infty)$ converges weakly/strongly in the two-scale sense in $L^p$, if and only if $\te \veps$ converges weakly/strongly in $L^p((0,T)\times \Sigma \times Z^{\ast})$ to the same limit.
\item The sequence $\veps \in L^p((0,T)\times \neps)$ for $p \in (1,\infty)$ converges weakly/strongly in the two-scale sense in $L^p$, if and only if $\te \veps$ converges weakly/strongly in $L^p((0,T)\times \Sigma \times N)$ to the same limit.
\end{enumerate}
\end{lemma}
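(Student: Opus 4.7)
My plan is to exploit the inner-product and norm identities of Lemma \ref{LemmaPropertiesUnfoldingOperator}, which convert two-scale statements on $\osem$ (resp.\ $\neps$) into ordinary $L^p$-statements on the product space $(0,T)\times\Sigma\times Z^{\ast}$ (resp.\ $(0,T)\times\Sigma\times N$). The pivotal auxiliary fact I would prove first is the following: for every admissible test function $\psi \in L^{p'}((0,T)\times\Sigma,C_{\per}(\overline{Z^{\ast}}))$, setting $\psi_{\epsilon}(t,x):=\psi(t,\x,x/\epsilon)$, the unfolded sequence $\te \psi_{\epsilon}$ converges strongly in $L^{p'}((0,T)\times\Sigma\times Z^{\ast})$ to $\psi$. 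Indeed $\te\psi_{\epsilon}(t,\x,y)=\psi(t,\epsilon[\x/\epsilon],y)$, so for $\psi$ continuous in $\x$ the uniform bound $|\epsilon[\x/\epsilon]-\x|\le\epsilon$ combined with uniform continuity yields uniform (hence $L^{p'}$-) convergence on the bounded domain. A density argument, based on the uniform control $\|\te\psi_{\epsilon}\|_{L^{p'}}\le C\|\psi\|$, will extend this to general admissible $\psi$.

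With this in hand, the weak equivalence in (i) reduces to the identity from Lemma \ref{LemmaPropertiesUnfoldingOperator},
\begin{linenomath*}\begin{align*}
\frac{1}{\epsilon}\int_0^T \int_{\osem}\veps\,\psi_{\epsilon}\,dx\,dt = (\te\veps,\te\psi_{\epsilon})_{(0,T)\times\Sigma\times Z^{\ast}}.
\end{align*}\end{linenomath*}
If $\te\veps\rightharpoonup v_0$ weakly in $L^p$, I would split $(\te\veps,\te\psi_{\epsilon})=(\te\veps,\psi)+(\te\veps,\te\psi_{\epsilon}-\psi)$ and combine the uniform $L^p$-bound on the weakly convergent sequence $\te\veps$ with the auxiliary strong convergence to conclude that the right-hand side tends to $(v_0,\psi)$, which is exactly weak two-scale convergence of $\veps$ to $v_0$. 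For the converse, weak two-scale convergence supplies, via Banach--Steinhaus applied through the admissible test family, the uniform bound $\|\te\veps\|_{L^p}=\epsilon^{-1/p}\|\veps\|_{L^p(\osem)}\le C$, hence a weakly $L^p$-convergent subsequence of $\te\veps$ exists; the identity above, together with density of admissible test functions in $L^{p'}((0,T)\times\Sigma\times Z^{\ast})$, forces any such weak limit to coincide with $v_0$, and a standard subsequence-principle argument upgrades this to convergence of the whole sequence.

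The strong equivalence in (i) follows at once: since $L^p$ is uniformly convex for $p\in(1,\infty)$, strong convergence is equivalent to weak convergence plus norm convergence, and the identity $\|\te\veps\|_{L^p((0,T)\times\Sigma\times Z^{\ast})}=\epsilon^{-1/p}\|\veps\|_{L^p((0,T)\times\osem)}$ from Lemma \ref{LemmaPropertiesUnfoldingOperator} matches the two norm conditions; the weak equivalence has already been established. Part (ii) on the surface $\neps$ is handled by repeating the same three-step argument, now with the surface identities $(\te\veps,\te\weps)_{(0,T)\times\Sigma\times N}=(\veps,\weps)_{(0,T)\times\neps}$ and $\|\te\veps\|_{L^p((0,T)\times\Sigma\times N)}=\|\veps\|_{L^p((0,T)\times\neps)}$ replacing their channel counterparts, and with admissible test functions drawn from $L^{p'}((0,T),C(\overline{\Sigma},C_{\per}(\overline{N})))$.

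The main obstacle is the auxiliary strong convergence $\te\psi_{\epsilon}\to\psi$: because admissible test functions are only $L^{p'}$ in $(t,\x)$ with values in $C_{\per}(\overline{Z^{\ast}})$, the density argument has to respect the mixed Bochner structure of the test-function space rather than simply invoking uniform continuity. Once this is in place, the remaining steps are short bookkeeping on top of the unfolding identities.
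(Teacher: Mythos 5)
Your overall strategy -- transferring everything to the fixed domain $(0,T)\times\Sigma\times Z^{\ast}$ via the identities of Lemma \ref{LemmaPropertiesUnfoldingOperator}, proving $\te\psi_\epsilon\to\psi$ strongly in $L^{p'}$ for admissible $\psi$, and upgrading weak to strong convergence by uniform convexity (Radon--Riesz) together with the norm identity -- is sound and is essentially the standard argument behind the result the paper merely cites (\cite{BourgeatLuckhausMikelic} for $p=2$, weak convergence, bulk domains); in that sense your write-up is considerably more explicit than the paper's proof. Two small points: the unfolded test function is $\te\psi_\epsilon(t,\x,y)=\psi\big(t,\epsilon[\x/\epsilon]+\epsilon\bar y,y\big)$ rather than $\psi(t,\epsilon[\x/\epsilon],y)$ (the paper writes the correct formula inside the proof of Lemma \ref{LemmaTSStrongConvergence}), which changes nothing since the shift is still $O(\epsilon)$; and your density step for general $\psi$ in the Bochner space does go through because $\|\te\psi_\epsilon\|_{L^{p'}((0,T)\times\Sigma\times Z^{\ast})}\le C\|\psi\|_{L^{p'}((0,T)\times\Sigma,C_{\per}(\overline{Z^{\ast}}))}$ holds uniformly in $\epsilon$.

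There is, however, one step that does not work as stated: in the converse of the weak equivalence you invoke Banach--Steinhaus to conclude $\|\te\veps\|_{L^p}=\epsilon^{-1/p}\|\veps\|_{L^p(\osem)}\le C$ from weak two-scale convergence alone. The uniform boundedness principle only gives $\sup_\epsilon\|\Lambda_\epsilon\|_{X'}<\infty$ for the functionals $\Lambda_\epsilon(\psi)=\foe(\veps,\psi_\epsilon)_{(0,T)\times\osem}$ on $X=L^{p'}((0,T)\times\Sigma,C_{\per}(\overline{Z^{\ast}}))$. Since $\te\psi_\epsilon$ is constant in $\x$ on each $\epsilon$-cell and \emph{continuous in $y$}, the dual element $|\te\veps|^{p-2}\te\veps$ cannot in general be realized (or uniformly approximated in the relevant pairing) by functions of the form $\te\psi_\epsilon$ with $\|\psi\|_X\le 1$; what $\|\Lambda_\epsilon\|_{X'}$ actually controls is a mixed norm of the type $L^p_{t,\x}$ with values in the measures $\mathcal M(\overline{Z^{\ast}})$, i.e.\ essentially $\|\te\veps\|_{L^p((0,T)\times\Sigma,L^1(Z^{\ast}))}$, which is strictly weaker than $\|\te\veps\|_{L^p((0,T)\times\Sigma\times Z^{\ast})}$ (a sequence concentrating on sets $A_\epsilon\subset Z^{\ast}$ with $|A_\epsilon|\to 0$ shows the two are not comparable). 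The gap is harmless in practice: the lemma, like its counterpart in the cited literature, should be read for sequences satisfying $\epsilon^{-1/p}\|\veps\|_{L^p(\osem)}\le C$ (which is the only situation in which it is applied here, via Lemma \ref{LemmaAprioriEstimates} and Theorem \ref{TheoremStrongTSConvergence}), and for the strong-convergence equivalence the required bound follows for free from the assumed convergence of norms. You should either add this boundedness hypothesis explicitly or note these two ways of obtaining it; with that, the rest of your argument closes correctly.
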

\begin{proof}
This result was obtained for $p=2$ and weak convergences in \cite{BourgeatLuckhausMikelic} for bulk domains. However, the proof can easily be extended to our setting. The strong convergence results follow from the properties of the unfolding operator in Lemma \ref{LemmaPropertiesUnfoldingOperator}.
\end{proof}

The following Lemma shows, that the strong two-scale convergence is sufficient to pass to the limit in the nonlinear terms.

\begin{lemma}\label{LemmaTSStrongConvergence}
Let $g_{\epsilon}$ and $h_{\epsilon}$ satisfy the Assumptions \ref{AssumptionReactionRateChannels} and \ref{AssumptionReactionRateSurfaceChannels}. 
\begin{enumerate}
[label = (\roman*)]
\item Let $\veps $ be a sequence in $L^2((0,T)\times \osem)$ such that
\begin{align*}
\frac{1}{\sqrt{\epsilon}} \|\veps\|_{L^2((0,T)\times \osem)} \le C,
\end{align*}
and $\veps$ converges to $v_0 \in L^2((0,T)\times \Sigma \times Z^{\ast})$ strongly in the two-scale sense in $L^p$ for $p\in [1,2]$. Then it holds that
\begin{align*}
g_{\epsilon}(\veps) \rightarrow g(v_0) \quad \mbox{ in the two-scale sense}.
\end{align*}
\item Let $\veps $ be a sequence in $L^2((0,T)\times \neps)$ such that
\begin{align*}
\|\veps\|_{L^2((0,T)\times \neps)} \le C,
\end{align*}
and $\veps$ converges to $v_0 \in L^2((0,T)\times \Sigma \times N)$ strongly in the two-scale sense in $L^p$ on $\neps$ for $p\in [1,2]$. Then it holds that
\begin{align*}
h_{\epsilon}(\veps) \rightarrow h(v_0) \quad \mbox{ in the two-scale sense}.
\end{align*}
\end{enumerate}

\end{lemma}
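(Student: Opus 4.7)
Both parts are best handled by passing through the unfolding operator, where the composition $g_\epsilon(\veps)(t,x)=g(t,x/\epsilon,\veps(t,x))$ becomes transparent; I describe the argument for (i), as (ii) is completely analogous.

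By Lemma \ref{LemmaAequivalenzUnfoldingTSConvergence} the strong two-scale convergence of $\veps$ in $L^p$ translates into $\te\veps\to v_0$ strongly in $L^p((0,T)\times\Sigma\times Z^{\ast})$, while Lemma \ref{LemmaPropertiesUnfoldingOperator} together with the $L^2$-bound on $\veps$ gives a uniform bound $\|\te\veps\|_{L^2((0,T)\times\Sigma\times Z^{\ast})}\le C$. The key computation is
\begin{linenomath*}\begin{align*}
\te(g_\epsilon(\veps))(t,\bar x,y)=g\bigl(t,\,([\bar x/\epsilon],0)+y,\,\te\veps(t,\bar x,y)\bigr)=g\bigl(t,y,\te\veps(t,\bar x,y)\bigr),
\end{align*}\end{linenomath*}
where the second equality uses the $Y$-periodicity of $g$ in its second argument (Assumption \ref{AssumptionReactionRateChannels}) together with $[\bar x/\epsilon]\in\Z^{n-1}$.

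The Lipschitz-continuity of $g$ in the last variable then furnishes the pointwise bound $|\te(g_\epsilon(\veps))-g(\cdot,\cdot,v_0)|\le L\,|\te\veps-v_0|$, so $\te(g_\epsilon(\veps))\to g(\cdot,\cdot,v_0)$ strongly in $L^p$. The linear-growth estimate $|g(t,y,z)|\le\|g(\cdot,\cdot,0)\|_{L^{\infty}([0,T]\times\overline{Z^{\ast}})}+L|z|$ yields a uniform $L^2$-bound on $\te(g_\epsilon(\veps))$, and combined with the strong $L^p$-convergence ($p\le 2$) this lifts to weak $L^2$-convergence to the same limit. A reverse application of Lemma \ref{LemmaAequivalenzUnfoldingTSConvergence} then delivers the claimed two-scale convergence $g_\epsilon(\veps)\to g(v_0)$.

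Part (ii) follows the same pattern with $(\osem,Z^{\ast})$ replaced by $(\neps,N)$; the unfolding operator commutes with the trace (as noted after the definition of $\te$), and the surface isometry $\|\te\veps\|_{L^p((0,T)\times\Sigma\times N)}=\|\veps\|_{L^p((0,T)\times\neps)}$ from Lemma \ref{LemmaPropertiesUnfoldingOperator}(ii) (without the $\epsilon^{-1/p}$ factor) means the $L^2$-bound on $\veps|_{\neps}$ directly bounds $\te\veps$. I anticipate no substantive obstacle; the whole argument rests on the observation that the shift $([\bar x/\epsilon],0)$ is integer-valued in its first $(n-1)$ coordinates and hence vanishes under the $Y$-periodicity of $g$ and $h$, reducing everything to the Lipschitz dependence on the last variable.
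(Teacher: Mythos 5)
Your proof is correct, but it takes a genuinely different route from the paper's. The paper never unfolds the nonlinearity: it fixes a test function $\phi$, approximates $v_0$ by smooth functions $v^n$, splits the integral into $\int(h_\epsilon(\veps)-h_\epsilon(\veps^n))\phi$ and $\int h_\epsilon(\veps^n)\phi$, controls the first piece via Lipschitz continuity and the unfolding isometry (essentially the same three-term triangle inequality you write), and identifies the limit of the second piece via the oscillation lemma applied to the admissible oscillating function $h_\epsilon(\veps^n)\phi$, concluding with an iterated limit in $\epsilon$ and $n$. Your argument replaces all of this by the exact identity $\te(g_\epsilon(\veps))=g(\cdot,y,\te\veps)$, valid because $[\x/\epsilon]\in\Z^{n-1}$ and $g$ is $Y$-periodic, after which only the Lipschitz bound on the fixed domain $(0,T)\times\Sigma\times Z^{\ast}$ is needed. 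This is shorter, avoids the approximating sequence and the double limit, and in fact yields more than the lemma claims, namely strong $L^p$ convergence of $\te(g_\epsilon(\veps))$ and hence strong two-scale convergence of the nonlinear term. What the paper's density/oscillation-lemma route buys in exchange is robustness: it never uses that unfolding commutes exactly with the nonlinearity, so it carries over verbatim to kinetics with an additional continuous macroscopic dependence, say $g(t,\x,\fxe,z)$, where your identity would only hold up to the shift $\x\mapsto\epsilon[\x/\epsilon]$ and would then require a uniform-continuity argument. One shared caveat: both proofs invoke Lemma \ref{LemmaAequivalenzUnfoldingTSConvergence}, which is stated for $p\in(1,\infty)$, so the endpoint $p=1$ of the hypothesis strictly needs the (easy) extension of that equivalence to $L^1$.
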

\begin{proof}
We only prove the second result, since the first one follows in a similar way. Let $\phi  \in L^{p'}((0,T),C(\overline{\Sigma} ,C_{\per}(\overline{N})))$,  $v^n \in C_0^{\infty}((0,T)\times \Sigma, C_{\per}(\overline{N}))$ for $n\in \N$ such that $v^n \rightarrow v_0 $ in $L^2((0,T)\times \Sigma \times N)$, and we define $\veps^n(t,x):= v^n\left(t,\x,\fxe\right)$.

\begin{align*}
\int_0^T \int_{\neps} h_{\epsilon}(\veps) \phi\left(t,\x,\fxe\right)d\sigma dt &= \int_0^T \int_{\neps} \big[h_{\epsilon}(\veps) - h_{\epsilon}(\veps^n)\big] \phi\left(t,\x,\fxe\right) d\sigma dt 
\\
&\hspace{2em} + \int_0^T \int_{\neps} h_{\epsilon}(\veps^n) \phi\left(t,\x,\fxe\right) d\sigma dt =: I_{\epsilon,1} + I_{\epsilon,2}.
\end{align*}
For the first term $I_{\epsilon,1}$ we use the Lipschitz continuity of $h_{\epsilon}$ and the properties of the unfolding operator to obtain (with $\te \veps^n(t,\x,y)=v^n\left(t,\epsilon \left[\frac{\x}{\epsilon}\right] + \epsilon \bar{y} ,y\right)$)
\begin{align*}
|I_{\epsilon,1}|&\le C\|\veps - \veps^n\|_{L^p((0,T)\times \neps)} =  C \|\te \veps - \te \veps^n\|_{L^p((0,T)\times \Sigma \times N)}
\\
&\le C\big(\|\te \veps - v_0\|_{L^p((0,T)\times \Sigma \times N)} +  \|v_0 -  v^n\|_{L^p((0,T)\times \Sigma \times N)} 
\\
&\hspace{3em}+ \|v^n - \te \veps^n\|_{L^p((0,T)\times \Sigma \times N)}\big).
\end{align*}
The first term convergence to zero for $\epsilon \to 0$, due to the strong two-scale convergence of $\veps$ and Lemma \ref{LemmaAequivalenzUnfoldingTSConvergence}. The second term goes to zero for $n\to \infty$, and the last term vanishes for $\epsilon \to 0$, due to the dominated convergence theorem of Lebesgue, since $\te \veps^n \rightarrow v^n$ almost everywhere in $(0,T)\times \Sigma \times N$.
Let us estimate $I_{\epsilon,2}$:
\begin{align*}
I_{\epsilon,2} =& \bigg[\int_0^T\int_{\neps} h_{\epsilon}(\veps^n) \phi \left(t,\x,\fxe\right) d\sigma dt - \int_0^T \int_{\Sigma} \int_N h(v^n) \phi(t,\x,y) d\sigma_y d\x dt \bigg]
\\
&+ \int_0^T \int_{\Sigma} \int_N \big[ h(v^n) - h(v_0)\big] \phi d\sigma_y d\x dt
+ \int_0^T \int_{\Sigma} \int_N h(v_0) \phi d\sigma_y d\x dt.
\end{align*}
The term in the brackets converges to zero for $\epsilon \to 0$, due to the oscillation lemma, see \cite[Lemma 4.3]{BhattacharyaGahnNeussRadu}. The second term vanishes for $n \to \infty$, due to the Lipschitz continuity of $h$. This gives the desired result.
\end{proof}

To establish the strong two-scale convergence of $\uepsm$ we will show the strong convergence in $L^p$ of $\te \uepsm$. This requires to control the dependence on the time-variable. Since the time-derivative of $\ueps$ respectively $\uepsm$ only exists in a weak sense, in fact we have $\partial_t \uepsm \in L^2((0,T),(\Hepsom)')$, it is not obvious in which space $\partial_t \te \uepsm$ lies and how its norm can be estimated with respect to $\epsilon$. To overcome this problem we use a functional analytical argument. We consider the  $L^2$-adjoint of $\te$, the so called averaging operator $\ue$, to obtain a representation of $\partial_t \te$ via the averaging operator. Therefore, we have to restrict the domain of definition for $\te$ and $\ue$. This idea was already used in \cite{GahnNeussRaduKnabner2018a} and here we put in a more general framework. First of all, let us give a general functional analytic result:
\begin{lemma}\label{LemmaGeneralTimeDerivative}
Let $V,\, W $ be reflexive, separable Banach-spaces, and $Y,\, X$ Hilbert-spaces,  such that we have the Gelfand-triples
\begin{align*}
V \hookrightarrow Y \hookrightarrow V', \quad W \hookrightarrow X \hookrightarrow W',
\end{align*}
with continuous and dense embeddings. Here, we identify $Y$ and $X$ with their dual spaces $Y'$ and $X'$ via the Riesz-representation theorem. Let $A \in \mathcal{L}(Y,X)$ and we denote by $A^{\ast} \in \mathcal{L}(X,Y)$ the adjoint operator of $A$. If $A^{\ast}(W) \subset V$ with $\|A^{\ast}w\|_V \le C \|w\|_W$ for all $w \in W$, and $u \in L^2((0,T),Y)\cap H^1((0,T),V')$, then it holds $\partial_t Au \in L^2((0,T),W')$ with
\begin{align*}
\langle \partial_t Au , w \rangle_{W',W} = \langle \partial_t u, A^{\ast} w\rangle_{V',V} \quad \mbox{ for all } w \in W.
\end{align*}
Here we apply the operator $A$ pointwise to $u$ with respect to $t \in (0,T)$.
\end{lemma}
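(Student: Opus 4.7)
The plan is to exhibit $\partial_t A u$ explicitly as a functional on $W$ and then check that it coincides with the distributional derivative of $A u$ viewed as a $W'$-valued map. First I would define, for almost every $t \in (0,T)$, the candidate $T(t) \in W'$ by
\[
T(t)(w) := \langle \partial_t u(t), A^{\ast} w\rangle_{V',V}, \qquad w \in W.
\]
This makes sense because the hypothesis gives $A^{\ast}w \in V$ and moreover $\|A^{\ast}w\|_V \le C\|w\|_W$. Consequently,
\[
|T(t)(w)| \le \|\partial_t u(t)\|_{V'}\,\|A^{\ast}w\|_V \le C\,\|\partial_t u(t)\|_{V'}\,\|w\|_W,
\]
so $\|T(t)\|_{W'} \le C\,\|\partial_t u(t)\|_{V'}$ and hence $T \in L^2((0,T), W')$ with $\|T\|_{L^2((0,T),W')} \le C\,\|\partial_t u\|_{L^2((0,T), V')}$.

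Next I would verify that $T$ is the weak time derivative of $Au$ in the sense of $W'$-valued distributions. Since $A \in \mathcal{L}(Y,X)$ and $u \in L^2((0,T), Y)$, we have $Au \in L^2((0,T),X) \hookrightarrow L^2((0,T), W')$, so asking for a $W'$-valued weak derivative is meaningful. Fix $w \in W$ and $\phi \in C_c^{\infty}((0,T))$. Using the identification of $X$ with its dual and the definition of the adjoint $A^{\ast}$,
\[
\int_0^T \langle Au(t), w\rangle_{W',W}\,\phi'(t)\, dt = \int_0^T (Au(t), w)_X\,\phi'(t)\, dt = \int_0^T (u(t), A^{\ast}w)_Y\,\phi'(t)\, dt.
\]
Since $A^{\ast}w \in V$, I can now invoke the defining property of $\partial_t u$ for the Gelfand triple $V \hookrightarrow Y \hookrightarrow V'$, namely
\[
\int_0^T (u(t), v)_Y\,\phi'(t)\, dt = -\int_0^T \langle \partial_t u(t), v\rangle_{V',V}\,\phi(t)\, dt \qquad \text{for every } v \in V,
\]
applied with $v = A^{\ast} w$. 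Combining the two displays yields
\[
\int_0^T \langle Au(t), w\rangle_{W',W}\,\phi'(t)\, dt = - \int_0^T T(t)(w)\,\phi(t)\, dt,
\]
which is precisely the statement that $\partial_t(Au) = T$ in $L^2((0,T), W')$. The pointwise identity claimed in the lemma then follows.

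The calculation itself is mostly bookkeeping of the various dualities. The main obstacle, and the reason for the somewhat strong hypothesis $A^{\ast}(W) \subset V$ with a quantitative bound, is that $\partial_t u$ only lives in $V'$, so to pair it with anything one must first ensure that the argument lies in $V$. The assumption on $A^{\ast}$ is exactly what makes $\langle \partial_t u, A^{\ast} w\rangle_{V',V}$ a legitimate object, and the same bound is what turns the pointwise $W'$-estimate into the $L^2$-in-time estimate for $T$. A consistent use of the Riesz identifications $Y \cong Y'$ and $X \cong X'$ is needed so that $(A y, x)_X = (y, A^{\ast} x)_Y$ can be interchanged with the $W'$-$W$ and $V'$-$V$ pairings entering the distributional computation.
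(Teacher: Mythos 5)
Your proposal is correct and follows essentially the same route as the paper's proof: both verify the distributional identity $\int_0^T (Au,w)_X\psi' \,dt = \int_0^T (u,A^{\ast}w)_Y\psi'\,dt = -\int_0^T \langle \partial_t u, A^{\ast}w\rangle_{V',V}\psi\,dt$ using the adjoint relation and the definition of the generalized time derivative. You merely spell out more explicitly the $L^2((0,T),W')$ bound coming from $\|A^{\ast}w\|_V \le C\|w\|_W$, which the paper leaves implicit.
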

\begin{proof}
This is just a consequence of the definition of the adjoint operator and the generalized time-derivative. 
In fact, we have to show, see \cite[Prop. 23.20]{ZeidlerIIA}, that there exists $F \in L^2((0,T),W')$ such that for all $w \in W$ and $\psi \in \mathcal{D}(0,T)$ it holds that 

\begin{align*}
-\int_0^T \langle F, w \rangle_{W',W} \psi dt =  \int_0^T (Au,w)_X \psi' dt.
\end{align*}

We have for all $w\in W$ and $\psi \in \mathcal{D}(0,T)$
\begin{align*}
\int_0^T (Au, w)_X \psi'dt = \int_0^T (u,A^{\ast}w)_Y \psi' dt  = -\int_0^T \langle \partial_t u , A^{\ast} w\rangle_{V',V} \psi dt.
\end{align*}
Due to our assumptions, we have almost everywhere in $(0,T)$
\begin{align*}
\vert \langle \partial_t u , A^{\ast} w \rangle_{V',V} \vert \le \Vert \partial_t u \Vert_{V'} \Vert A^{\ast} w\Vert_V \le C \Vert \partial_t u \Vert_{V'}\Vert w \Vert_W.
\end{align*}
Hence, $w \mapsto - \langle \partial_t u , A^{\ast} w\rangle_{V',V}  \in L^2((0,T),W')$, which gives the desired result.
\end{proof}

Let us define $\epsilon^{-1} \ue$ as the $L^2$-adjoint of $\te$, \ie let 
\begin{align*}
\ue : L^2((0,T)\times \Sigma \times Z^{\ast}) \rightarrow L^2((0,T) \times \osem),
\end{align*}
such that
\begin{align*}
\big(\te \veps , \phi \big)_{(0,T)\times \Sigma \times Z^{\ast}} = \foe \big(\veps , \ue \phi \big)_{(0,T)\times \osem},
\end{align*}
for all $\veps \in L^2((0,T) \times \osem)$ and $\phi \in L^2((0,T)\times \Sigma \times Z^{\ast})$. It is easy to check that
\begin{align*}
\ue (\phi)(t,x) = \int_Y \phi \left(t,\epsilon \left(\bar{z} + \left[\frac{\x}{\epsilon}\right] \right), \left(\left\{\frac{\x}{\epsilon}\right\},\frac{x_n}{\epsilon}\right)\right) d\bar{z} \quad \mbox{ for } (t,x)\in (0,T)\times \osem,
\end{align*}
and $x = [x] + \{x\}$, but we will not use this explicit formula for $\ue (\phi)$. 
\begin{corollary}\label{KorollarNormAveragingOperator}
For all $\phi \in L^2((0,T)\times \Sigma \times Z^{\ast})$ it holds that
\begin{align*}
\|\ue \phi \|_{L^2((0,T)\times \osem)} \le \sqrt{\epsilon} \|\phi\|_{L^2((0,T)\times \Sigma \times Z^{\ast})}.
\end{align*}
\end{corollary}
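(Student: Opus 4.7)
The plan is to derive the estimate directly from the defining duality relation between $\te$ and $\ue$, combined with the $L^2$-isometry of the unfolding operator provided by Lemma \ref{LemmaPropertiesUnfoldingOperator}. No new structural ideas are needed; the statement is a routine consequence of how $\ue$ was introduced as (a scaled version of) the $L^2$-adjoint of $\te$, so the only real task is to keep track of the $\epsilon$-factors.

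First I would characterize the norm of $\ue \phi$ via duality on $L^2((0,T)\times \osem)$, writing
\begin{linenomath*}\begin{align*}
\|\ue \phi\|_{L^2((0,T)\times \osem)} = \sup_{\veps} \frac{(\veps, \ue \phi)_{(0,T)\times \osem}}{\|\veps\|_{L^2((0,T)\times \osem)}},
\end{align*}\end{linenomath*}
where the supremum runs over nonzero $\veps \in L^2((0,T)\times \osem)$. Next I would invoke the defining identity of $\ue$, namely $(\te \veps, \phi)_{(0,T)\times \Sigma \times Z^{\ast}} = \foe (\veps, \ue \phi)_{(0,T)\times \osem}$, to rewrite the inner product in the numerator as $\epsilon (\te \veps, \phi)_{(0,T)\times \Sigma \times Z^{\ast}}$.

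Then I would apply the Cauchy--Schwarz inequality on $L^2((0,T)\times \Sigma \times Z^{\ast})$ and, crucially, use the isometry property from Lemma \ref{LemmaPropertiesUnfoldingOperator}, namely $\|\te \veps\|_{L^2((0,T)\times \Sigma \times Z^{\ast})} = \epsilon^{-1/2}\|\veps\|_{L^2((0,T)\times \osem)}$. This yields
\begin{linenomath*}\begin{align*}
|(\veps,\ue \phi)_{(0,T)\times \osem}| \le \epsilon \|\te \veps\|_{L^2((0,T)\times \Sigma \times Z^{\ast})} \|\phi\|_{L^2((0,T)\times \Sigma \times Z^{\ast})} = \sqrt{\epsilon}\, \|\veps\|_{L^2((0,T)\times \osem)} \|\phi\|_{L^2((0,T)\times \Sigma \times Z^{\ast})}.
\end{align*}\end{linenomath*}
Dividing by $\|\veps\|_{L^2((0,T)\times \osem)}$ and taking the supremum gives the claimed bound $\|\ue \phi\|_{L^2((0,T)\times \osem)} \le \sqrt{\epsilon}\, \|\phi\|_{L^2((0,T)\times \Sigma \times Z^{\ast})}$.

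There is no genuine obstacle here; the statement is essentially a book-keeping exercise in the $\epsilon$-scaling built into the inner products. The only point worth double-checking is that the factor $\foe$ in the adjoint identity is correctly carried through the duality characterization of the norm, so that the resulting exponent of $\epsilon$ indeed comes out as $1/2$ rather than $1$ or $-1/2$.
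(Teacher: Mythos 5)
Your argument is correct and is precisely the ``simple duality argument'' the paper invokes (deferring details to the cited reference): the duality characterization of the norm, the defining adjoint identity with the factor $\foe$, Cauchy--Schwarz, and the isometry $\|\te \veps\|_{L^2((0,T)\times\Sigma\times Z^{\ast})}=\epsilon^{-1/2}\|\veps\|_{L^2((0,T)\times\osem)}$ combine exactly as you compute to give the exponent $1/2$. No discrepancy with the paper's approach.
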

\begin{proof}
This follows by a simple duality argument, see also \cite[Corollary 2.15]{GahnDissertation} for more details.
\end{proof}
Concerning the regularity of $\ue(\phi)$ with respect to the spatial variable, we have that
\begin{align*}
\ue : L^2((0,T)\times \Sigma , H^1(Z^{\ast})) \rightarrow L^2(0,T), H^1(\osem))
\end{align*}
with 
\begin{align}
\label{GradientAveragingOperator}
\epsilon \nabla \ue (\phi) = \ue (\nabla_y \phi) \quad \mbox{ for all } \phi \in L^2((0,T)\times \Sigma, H^1(Z^{\ast})).
\end{align}
This result uses the fact that $Z^{\ast}$ is not touching the lateral boundary of $Z$  and can be shown by similar arguments like in the proof of \cite[Proposition 6]{GahnNeussRaduKnabner2018a}.
We emphasize that the situation gets more delicate if the channel $Z^{\ast}$ touches the lateral boundary of $Z$ and in that case
one has to restrict to function spaces with vanishing traces on $\partial Z$, see also \cite{GahnNeussRaduKnabner2018a}.

Next, we apply Lemma \ref{LemmaGeneralTimeDerivative} to obtain a representation of $\partial_t \te \uepsm$ by means of the $\partial_t \uepsm$ and $ \ue$. However, since we have just $\partial_t \uepsm \in L^2((0,T),(\Hepsom)')$, we have to restrict the operator $\ue$. We define
\begin{align*}
\Ho := \{v \in H^1(Z^{\ast}) \, : \, v|_{S_{\ast}^{\pm}} = 0\}\subset H^1(Z^{\ast}).
\end{align*}
and consider
\begin{align*}
\ue : L^2((0,T)\times \Sigma , \Ho) \rightarrow L^2((0,T), \Hepsom).
\end{align*}

\begin{remark}\label{AveragingRemarkTimeIndependent}
We emphasize that in the definitions of the unfolding operator $\te$ and the averaging operator $\ue$ the time-variable acts as an additional parameter. More precisely, both operators may be defined for the time-independent case and then for time-dependent spaces pointwise with respect to $t \in (0,T)$. Hence, in the following we use the same notation for the unfolding operator $\te$ as an operator on $L^2((0,T)\times \osem)$ and $L^2(\osem)$, and in the same way we proceed for the averaging operator $\ue$.
Especially,  Corollary \ref{KorollarNormAveragingOperator} and equation $\eqref{GradientAveragingOperator}$  also hold for time-independent functions.
\end{remark}

\begin{proposition}\label{PropositionExistenceTimeDerivativeUnfoldedSequence}
Let $\veps \in L^2((0,T),L^2(\osem)) \cap H^1((0,T),(\Hepsom)')  $. Then we have $\te \veps \in H^1((0,T),L^2(\Sigma,\Ho)')$ with
\begin{align}\label{RepresentationTimeDerivative}
\langle \partial_t \te \veps(t) , \phi \rangle_{L^2(\Sigma,\Ho)',L^2(\Sigma,\Ho)} = \foe \langle \partial_t \veps(t), \ue \phi \rangle_{(\Hepsom)',\Hepsom}
\end{align}
for all $\phi \in L^2(\Sigma,\Ho)$ and almost every $t \in (0,T)$. Additionally, we have
\begin{align}
\label{InequalityTimeDerivativeUnfoldingOperator}
\|\partial_t \te \veps \|_{L^2((0,T),L^2(\Sigma,\Ho)')} \le \foe \|\partial_t \veps \|_{L^2((0,T),(\Hepsom)')} .
\end{align}
\end{proposition}
\begin{proof}
In Lemma \ref{LemmaGeneralTimeDerivative} we choose (here we first consider $\te$ as a stationary operator, see Remark \ref{AveragingRemarkTimeIndependent}) :
\begin{align*}
V=\Hepsom, \quad Y=L^2(\osem), \quad W=L^2(\Sigma,\Ho), \quad X= L^2(\Sigma \times Z^{\ast}), \quad A=\te.
\end{align*}

Then we have $\te \in \mathcal{L} (Y,X)$ and $A^{\ast} = \epsilon^{-1} \ue \in \mathcal{L}(X,Y)$. Hence, the conditions of Lemma \ref{LemmaGeneralTimeDerivative} are fulfilled and we obtain $\partial_t \te \veps \in L^2((0,T),L^2(\Sigma,\Ho)')$ with $\eqref{RepresentationTimeDerivative}$.
For the estimate $\eqref{InequalityTimeDerivativeUnfoldingOperator}$ we choose $\phi \in L^2(\Sigma,\Ho)$ with $\|\phi\|_{L^2(\Sigma,\Ho)} \le 1$ and obtain
\begin{align*}
\langle \partial_t \te \veps , \phi \rangle_{L^2(\Sigma,\Ho)',L^2(\Sigma,\Ho)} &= \foe \langle \partial_t \veps ,\ue \phi \rangle_{(\Hepsom)',\Hepsom}
\\
&\le \foe \|\partial_t \veps \|_{(\Hepsom)'}\|\ue \phi\|_{\Hepsom} \le \foe \|\partial_t \veps \|_{(\Hepsom)'},
\end{align*}
where the last inequality follows from $\eqref{GradientAveragingOperator}$ and Corollary \ref{KorollarNormAveragingOperator} (see also Remark \ref{AveragingRemarkTimeIndependent}).
\end{proof}

\begin{lemma}
\label{LemmaEstimateShiftsUnfolded}
For all $\peps \in L^2((0,T)\times \osem)$,  $0<h\ll 1$, and $\bxi \in \R^{n-1}$ with $|\bxi| \ll h$, it holds for $\epsilon$ small enough that
\begin{align*}
\big\|\te \peps(\cdot,\cdot + \bxi , \cdot) - \te \peps \big\|_{L^2((0,T)\times \Sigma_{2h} \times Z)}^2 \le \frac{1}{\epsilon} \sum_{\bar{j}\in \{0,1\}^{n-1}} \|\delta_l \peps\|_{L^2((0,T)\times \who_{\ast,\epsilon,h}^M)}^2
\end{align*}
 with $l = l(\epsilon,\bxi,\bar{j}) = \bar{j} + \left[\frac{\bxi}{\epsilon}\right]$.
\end{lemma}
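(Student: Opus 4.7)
The plan is to rewrite the left-hand side cell by cell as a discrete shift of $\peps$ and then to appeal to the cell-by-cell $L^2$-isometry of the unfolding operator from Lemma \ref{LemmaPropertiesUnfoldingOperator}. The starting point is a componentwise decomposition of the Gauss bracket: writing $\x/\epsilon = [\x/\epsilon] + \{\x/\epsilon\}$ and $\bxi/\epsilon = [\bxi/\epsilon] + \{\bxi/\epsilon\}$ with fractional parts in $[0,1)^{n-1}$, one has
\begin{linenomath*}\begin{align*}
\left[\frac{\x + \bxi}{\epsilon}\right] = \left[\frac{\x}{\epsilon}\right] + \left[\frac{\bxi}{\epsilon}\right] + \bar j(\x,\bxi,\epsilon),
\end{align*}\end{linenomath*}
where the carry vector $\bar j \in \{0,1\}^{n-1}$ is characterized componentwise by $\bar j_i = 1$ iff $\{\x_i/\epsilon\} + \{\bxi_i/\epsilon\} \ge 1$. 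This induces a measurable partition $\Sigma_{2h} = \bigsqcup_{\bar j \in \{0,1\}^{n-1}} \Sigma_{2h}^{\bar j}$ on which the carry is locally constant.

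On each piece $\Sigma_{2h}^{\bar j}$, setting $l = l(\epsilon,\bxi,\bar j) = (\bar j + [\bxi/\epsilon],0) \in \Z^{n-1}\times\{0\}$, the definition of $\te$ directly yields the pointwise identity
\begin{linenomath*}\begin{align*}
\te \peps(t, \x + \bxi, y) - \te \peps(t, \x, y) = \te(\delta_l \peps)(t, \x, y),
\end{align*}\end{linenomath*}
since $\epsilon\bigl(\bigl[\frac{\x+\bxi}{\epsilon}\bigr],0\bigr) + \epsilon y = \epsilon\bigl(\bigl[\frac{\x}{\epsilon}\bigr],0\bigr) + \epsilon y + \epsilon l$. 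The zero extension of $\peps$ outside $\osem$ is harmless: since $|\bxi| \ll h$ and $\epsilon$ is small, for every $\x \in \Sigma_{2h}$ both cells $\epsilon(Y + [\x/\epsilon])$ and $\epsilon(Y + [\x/\epsilon] + \bar l)$ (with $\bar l$ the spatial part of $l$) are contained in $\whs_h$, so the relevant arguments of $\peps$ lie in the physical channel domain.

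Finally, summing the squared norm contributions and enlarging the integration region $\Sigma_{2h}^{\bar j} \subset \whs_h$, the cell-by-cell version of the $L^2$-identity from Lemma \ref{LemmaPropertiesUnfoldingOperator} gives for each fixed $\bar j$
\begin{linenomath*}\begin{align*}
\int_0^T \!\!\int_{\Sigma_{2h}^{\bar j}} \!\!\int_{Z^{\ast}} |\te(\delta_l \peps)|^2 \, dy \, d\x \, dt \le \int_0^T \!\!\int_{\whs_h} \!\!\int_{Z^{\ast}} |\te(\delta_l \peps)|^2 \, dy \, d\x \, dt = \frac{1}{\epsilon} \|\delta_l \peps\|_{L^2((0,T)\times \who_{\ast,\epsilon,h}^M)}^2 ,
\end{align*}\end{linenomath*}
and summing over the $2^{n-1}$ values of $\bar j$ produces the claimed bound. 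The only subtle point is the carry bookkeeping for the Gauss bracket of $\x + \bxi$; once that partition is in place, the rest is the standard unfolding isometry together with the elementary geometric check that the smallness hypothesis $|\bxi|\ll h$ keeps all shifted cells inside $\whs_h$.
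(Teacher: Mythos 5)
Your argument is correct and is precisely the standard shift-decomposition argument (carry decomposition of the Gauss bracket, partition of $\Sigma_{2h}$ by the carry vector, pointwise identity $\te\peps(\cdot,\cdot+\bxi,\cdot)-\te\peps=\te(\delta_l\peps)$ on each piece, then the cell-by-cell unfolding isometry) that the paper delegates to the cited reference \cite[page 709]{NeussJaeger_EffectiveTransmission}. You have simply written out in full, including the geometric check that $|\bxi|\ll h$ keeps all shifted cells inside $\whs_h$, the details the paper declares to be an obvious extension of the thin-layer case.
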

\begin{proof}
The idea of the proof can be found in \cite[page 709]{NeussJaeger_EffectiveTransmission} for a thin layer and can be extended in an obvious way to our setting.
\end{proof}

In the next theorem we formulate a general strong two-scale compactness result for sequences $\veps \in L^2((0,T),H^1(\osem))$  and their traces $\veps|_{\neps}$. This result allows us to pass to the limit in the nonlinear terms in $\eqref{VariationalEquation}$. 
Similar ideas have been used in \cite[Theorem 7.5]{GahnNeussRaduKnabner2018a}, where however, they were carried-out for the sequence of solutions of a microscopic problem in a thin layer with oscillating diffusion coefficients.
%

\begin{theorem}
\label{TheoremStrongTSConvergence}
Let $\veps \in L^2((0,T),H^1(\osem))\cap H^1((0,T),(\Hepsom)')$ such that
\begin{enumerate}
[label = (\roman*)]
\item\label{TheoremStrongTSConvergenceAprioriEstimates} we have the estimate
\begin{align*}
\foe \|\partial_t \veps\|_{L^2((0,T) , (\Hepsom)')} + \frac{1}{\sqrt{\epsilon}} \|\veps\|_{L^2((0,T)\times \osem)} + \sqrt{\epsilon}\|\nabla\veps\|_{L^2((0,T)\times \osem)} \le C,
\end{align*}
\item\label{TheoremStrongTSConvergenceControlShifts} for all $0< h\ll 1$ and $l\in \Z^{n-1}\times \{0\}$ and $|\epsilon l|\ll h$ it holds that
\begin{align*}
\frac{1}{\sqrt{\epsilon}} \|\delta \veps \|_{L^2((0,T)\times \who_{\ast,\epsilon,h}^M)} + \sqrt{\epsilon}\|\nabla \delta \veps \|_{L^2((0,T)\times \who_{\ast,\epsilon,h}^M)} \overset{\epsilon l \to 0 }{\longrightarrow} 0.
\end{align*}
\end{enumerate}
Then there exists $v_0 \in L^2((0,T)\times \Sigma , H^1(Z^{\ast}))$ such that up to a subsequence it holds for $p\in [1,2)$ and $\beta \in \left(\frac12,1\right)$
\begin{align*}
\veps &\rightarrow v_0 &\mbox{ in the two-scale sense},
\\
\epsilon \nabla \veps &\rightarrow \nabla_y v_0 &\mbox{ in the two-scale sense},
\\
\te \veps &\rightarrow v_0 &\mbox{ strongly in } L^p(\Sigma,L^2((0,T),H^{\beta}(Z^{\ast})).
\end{align*}
Especially, we have $\veps \rightarrow v_0$ strongly in the two-scale sense in $L^p$ and $\veps|_{\neps} \rightarrow v_0|_{N}$ strongly in the two-scale sense on $\neps$ in $L^p$. 
\end{theorem}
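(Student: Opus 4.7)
I would first read off the weak two-scale limits from the a~priori bounds, and then upgrade them to strong convergence through a Kolmogorov-Riesz compactness argument applied to the unfolded sequence $\te\veps$. Assumption \ref{TheoremStrongTSConvergenceAprioriEstimates} together with Lemma \ref{LemmaTSWeakCompactness}\,\ref{TSCompactnessGradient} immediately produces (along a subsequence) $v_0 \in L^2((0,T)\times\Sigma, H^1(Z^\ast))$ such that $\veps\to v_0$ and $\epsilon\nabla\veps \to \nabla_y v_0$ in the two-scale sense, which by Lemma \ref{LemmaAequivalenzUnfoldingTSConvergence} amounts to $\te\veps \rightharpoonup v_0$ weakly in $L^2((0,T)\times\Sigma\times Z^\ast)$. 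Combined with $\nabla_y\te\veps = \epsilon\te\nabla\veps$ from Lemma \ref{LemmaPropertiesUnfoldingOperator}, assumption \ref{TheoremStrongTSConvergenceAprioriEstimates} also shows that $\{\te\veps\}$ is uniformly bounded in $L^2((0,T)\times\Sigma, H^1(Z^\ast))$.

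The core step is to establish that $\{\te\veps\}$ is relatively compact in $L^2((0,T)\times\Sigma\times Z^\ast)$ via the scalar Kolmogorov-Riesz theorem; the strong limit must then agree with $v_0$. Equi-continuity of shifts in the $y$-variable is immediate from the uniform $H^1$-bound in $y$; equi-continuity in $\x$ follows from Lemma \ref{LemmaEstimateShiftsUnfolded} combined with assumption \ref{TheoremStrongTSConvergenceControlShifts}, which give
\begin{linenomath*}\begin{align*}
\|\te\veps(\cdot,\cdot+\bxi,\cdot)-\te\veps\|^2_{L^2((0,T)\times \Sigma_{2h}\times Z^\ast)} \le \foe \sum_{\bar j\in\{0,1\}^{n-1}} \|\delta_l\veps\|^2_{L^2((0,T)\times \who_{\ast,\epsilon,h}^M)} \longrightarrow 0
\end{align*}\end{linenomath*}
uniformly in $\epsilon$ as $\bxi\to 0$, the boundary strip $\Sigma\setminus\Sigma_{2h}$ being handled by the uniform boundedness and a standard cutoff argument.

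The main obstacle is equi-continuity in the time variable, since Proposition \ref{PropositionExistenceTimeDerivativeUnfoldedSequence} only supplies $\|\partial_t\te\veps\|_{L^2((0,T),L^2(\Sigma,\Ho)')} \le C$, hence time-shift smallness only in the weak norm $L^2((0,T-s),L^2(\Sigma,\Ho)')$ rather than in $L^2((0,T-s)\times\Sigma\times Z^\ast)$. I would bridge the gap using Ehrling's lemma applied to the compactly embedded triple $H^1(Z^\ast)\hookrightarrow L^2(Z^\ast)\hookrightarrow\Ho'$: for every $\theta>0$,
\begin{linenomath*}\begin{align*}
\|u\|_{L^2(Z^\ast)}^2 \le \theta\,\|u\|_{H^1(Z^\ast)}^2 + C(\theta)\,\|u\|_{\Ho'}^2.
\end{align*}\end{linenomath*}
Applied pointwise in $(t,\x)$ to the difference $\te\veps(\cdot+s,\cdot,\cdot)-\te\veps$ and integrated, the $H^1$-term is controlled by the uniform bound from the first paragraph, whereas the $\Ho'$-term is bounded by $C(\theta)\,s^2$ via the standard estimate $\|\tau_s u - u\|_{L^2((0,T-s),Y)}\le s\,\|\partial_t u\|_{L^2((0,T),Y)}$ with $Y=L^2(\Sigma,\Ho)'=L^2(\Sigma,\Ho')$. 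Choosing first $\theta$ small and then $s$ small yields the required equi-continuity in $t$ uniformly in $\epsilon$.

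With Kolmogorov-Riesz thus delivering $\te\veps\to v_0$ strongly in $L^2((0,T)\times\Sigma\times Z^\ast)$, the interpolation $\|u\|_{H^\beta(Z^\ast)} \le C\,\|u\|_{L^2(Z^\ast)}^{1-\beta}\|u\|_{H^1(Z^\ast)}^\beta$ applied pointwise in $(t,\x)$, followed by H\"older with exponents $(1/(1-\beta),1/\beta)$, promotes this to strong convergence in $L^2((0,T)\times\Sigma, H^\beta(Z^\ast))$ thanks to the uniform $L^2((0,T)\times\Sigma, H^1)$-bound; a further H\"older step exploiting $|\Sigma|<\infty$ and $p<2$ gives the stated strong convergence in $L^p(\Sigma, L^2((0,T), H^\beta(Z^\ast)))$. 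Strong two-scale convergence of $\veps$ in $L^p$ on $\osem$ then follows from Lemma \ref{LemmaAequivalenzUnfoldingTSConvergence}, and the trace version on $\neps$ follows from the commutation $\te(\veps|_{\neps})=(\te\veps)|_N$ in Section \ref{Sect_Two_scale_convergence} combined with the continuity of the trace $H^\beta(Z^\ast)\to L^2(N)$, available since $\beta>1/2$.
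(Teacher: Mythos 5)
Your overall architecture is sound and genuinely different from the paper's in one respect: the paper applies a \emph{vector-valued} Kolmogorov criterion in the variable $\x$ alone, treating $L^2((0,T),H^{\beta}(Z^{\ast}))$ as the target Banach space, and obtains the required compactness in $(t,y)$ through condition (K1) — relative compactness of the averages $\int_A \te\veps\,d\x$ — which it verifies by Aubin--Lions using Proposition \ref{PropositionExistenceTimeDerivativeUnfoldedSequence}. You instead run a scalar Kolmogorov--Riesz argument in all variables and handle the time direction by the translation estimate $\|\tau_s u - u\|_{L^2((0,T-s),Y)}\le s\|\partial_t u\|_{L^2((0,T),Y)}$ combined with Ehrling's lemma for $H^1(Z^{\ast})\hookrightarrow L^2(Z^{\ast})\hookrightarrow \Ho'$; this is essentially Simon's translation proof of the Aubin--Lions--Simon theorem and is a legitimate substitute, using exactly the same input ($\foe\|\partial_t\veps\|_{L^2((0,T),(\Hepsom)')}\le C$ transported to $\partial_t\te\veps$ via Proposition \ref{PropositionExistenceTimeDerivativeUnfoldedSequence}). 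The $y$-translations are also unproblematic (use an extension operator $H^1(Z^{\ast})\to H^1(\R^n)$, or simply Rellich), and your final interpolation $\|u\|_{H^{\beta}}\le C\|u\|_{L^2}^{1-\beta}\|u\|_{H^1}^{\beta}$ plus H\"older correctly upgrades strong $L^2$ convergence to $H^{\beta}$, recovering the trace convergence on $\neps$ since $\beta>\tfrac12$.

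There is, however, one concrete gap: the claim that Kolmogorov--Riesz delivers $\te\veps\to v_0$ strongly in $L^2((0,T)\times\Sigma\times Z^{\ast})$, i.e., in $L^2$ over \emph{all} of $\Sigma$. The only control of $\x$-translations you have comes from Lemma \ref{LemmaEstimateShiftsUnfolded} together with hypothesis \ref{TheoremStrongTSConvergenceControlShifts}, and both are interior statements: they bound shifts on $\Sigma_{2h}$ in terms of norms over $\who_{\ast,\epsilon,h}^M$. Near $\partial\Sigma$ you have nothing beyond the uniform $L^2$ bound, and a uniform $L^2$ bound does not prevent the mass of $\te\veps$ from concentrating on the strip $\Sigma\setminus\Sigma_h$; consequently the Fr\'echet--Kolmogorov criterion on the full bounded domain $\Sigma$ (which requires $\sup_{\epsilon}\|\te\veps\|_{L^2(\Sigma\setminus\Sigma_h\times\cdots)}\to 0$ as $h\to 0$, or equivalently translation equicontinuity of the zero-extension) cannot be verified in $L^2$, and no ``standard cutoff argument'' repairs this. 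What your estimates actually give is strong convergence in $L^2((0,T)\times\Sigma_h\times Z^{\ast})$ for every fixed $h>0$ (after a diagonal extraction), and the passage to a global statement must go through
\begin{linenomath*}\begin{align*}
\|\te\veps\|_{L^p(\Sigma\setminus\Sigma_h,\,L^2((0,T),H^{\beta}(Z^{\ast})))}\le C\,|h|^{\frac{2-p}{2p}}\,\|\te\veps\|_{L^2(\Sigma,\,L^2((0,T),H^1(Z^{\ast})))}\le C\,|h|^{\frac{2-p}{2p}},
\end{align*}\end{linenomath*}
which is precisely the paper's condition (K3) and is the \emph{only} place the restriction $p<2$ is genuinely needed — not, as in your write-up, merely as a final embedding $L^2(\Sigma,X)\hookrightarrow L^p(\Sigma,X)$ applied after an (unavailable) global $L^2$ compactness. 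With this correction your proof closes; without it, the step asserting $L^2(\Sigma)$-compactness is false as stated.
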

\begin{proof}
The weak two-scale convergences of $\veps $ and $\epsilon \nabla \veps$ follow directly from Lemma \ref{LemmaTSWeakCompactness} and the estimates in \ref{TheoremStrongTSConvergenceAprioriEstimates}. The strong two-scale convergence of $\veps$ and $\veps|_{\neps}$ in $L^p$ follow from the strong convergence of $\te \veps$ in $L^p(\Sigma,L^2((0,T),H^{\beta}(Z^{\ast}))$ by the embedding $H^{\beta}(Z^{\ast}) \hookrightarrow L^2(N)$, and Lemma \ref{LemmaAequivalenzUnfoldingTSConvergence}. So it remains to prove the strong convergence of $\te \veps$. Therefore, we use the Kolmogorov-type compactness result \cite[Corollary 2.5]{GahnNeussRaduKolmogorovCompactness} for the sequence
\begin{align*}
\te \veps \in L^2(\Sigma,L^2((0,T),H^1(Z^{\ast})) \hookrightarrow L^p(\Sigma,L^2((0,T),H^{\beta}(Z^{\ast}))).
\end{align*}
We have to check the following three conditions:
\begin{enumerate}
[label = (K\arabic*)]
\item\label{KolmogorovConditionRange}
For every $A\subset \Sigma $ measurable, the sequence
\begin{align*}
\veps^A(t,y):= \int_A \te \veps d\x 
\end{align*}
is relatively compact in $L^2((0,T),H^{\beta}(Z^{\ast}))$.
\item\label{KolmogorovConditionShifts}
For $0<h\ll 1$ and $\bxi \in \R^{n-1}$ with $|\bxi|< h$ it holds that
\begin{align*}
\sup_{\epsilon} \|\te \veps (\cdot , \cdot + \bxi ,\cdot ) - \te \veps \|_{L^p(\Sigma_h,L^2((0,T),H^{\beta}(Z^{\ast})))} \overset{\bxi \to 0}{\longrightarrow} 0.
\end{align*}
\item\label{KolmogorovConditionBoundary}
For $0<h \ll 1$ it holds that
\begin{align*}
\sup_{\epsilon} \|\te \veps \|_{L^p(\Sigma\setminus \Sigma_h , L^2((0,T),H^{\beta}(Z^{\ast})))} \overset{h\to 0 }{\longrightarrow} 0.
\end{align*}
\end{enumerate}
Let us start with \ref{KolmogorovConditionBoundary}.  We obtain from the H\"older-inequality since $p<2$ that
\begin{align*}
\|\te \veps \|_{L^p(\Sigma\setminus \Sigma_h , L^2((0,T),H^{\beta}(Z^{\ast})))} \le C |h|^{\frac{2-p}{2p}} \|\te \veps \|_{L^2(\Sigma\setminus \Sigma_h , L^2((0,T),H^1(Z^{\ast})))} \le C|h|^{\frac{2-p}{2p}},
\end{align*}
where the last inequality follows from \ref{TheoremStrongTSConvergenceAprioriEstimates} and the properties of the unfolding operator from Lemma \ref{LemmaPropertiesUnfoldingOperator}. This gives \ref{KolmogorovConditionBoundary}. To prove \ref{KolmogorovConditionRange} we first notice that 
we have
\begin{align*}
\veps^A \in L^2((0,T),H^1(Z^{\ast})) \cap H^1((0,T),\Ho')
\end{align*}
with 
\begin{align}
\label{ZeitableitungVepsA}
\langle \partial_t \veps^A ,\phi \rangle_{\Ho',\Ho} = \langle \partial_t \te \veps ,\chi_A(\cdot_{\x}) \phi(\cdot_y) \rangle_{L^2(\Sigma,\Ho)',L^2(\Sigma,\Ho)}
\end{align}
for all $\phi \in  \Ho$. 
In fact, since $\chi_A (\cdot_{\x}) \phi(\cdot_y) \in L^2(\Sigma,\Ho)$ and $\partial_t \te \veps \in L^2((0,T),L^2(\Sigma,\Ho'))$ by Proposition \ref{PropositionExistenceTimeDerivativeUnfoldedSequence}, we obtain for every $\psi \in \mathcal{D}(0,T)$

\begin{align*}
\int_0^T (\veps^A , \phi )_{L^2(Z^{\ast})} \psi' dt & = \int_0^T \big(\te \veps , \chi_A (\cdot_{\x})  \phi(\cdot_y) \big)_{L^2(\Sigma \times Z^{\ast})} \psi' dt
\\
&= - \int_0^T \langle \partial_t \te \veps , \chi_A (\cdot_{\x} ) \phi(\cdot_y) \rangle_{L^2(\Sigma, \Ho'),L^2(\Sigma,\Ho)} \psi dt,
\end{align*}

what implies $\eqref{ZeitableitungVepsA}$.
 Obviously, due to \ref{TheoremStrongTSConvergenceAprioriEstimates} and Lemma \ref{LemmaPropertiesUnfoldingOperator}, the sequence $\veps ^A $ is bounded in $L^2((0,T),H^1(Z^{\ast}))$. Proposition \ref{PropositionExistenceTimeDerivativeUnfoldedSequence} and the estimate of $\partial_t \veps $ in \ref{TheoremStrongTSConvergenceAprioriEstimates} imply the boundedness of $\partial_t  \te \veps ^A$ in $L^2((0,T),L^2(\Sigma,\Ho)')$, from which we immediately obtain the boundedness of $\partial_t \veps^A $ in $L^2((0,T),\Ho')$ using $\eqref{ZeitableitungVepsA}$. Since $H^1(Z^{\ast}) \hookrightarrow H^{\beta}(Z^{\ast})$ is compact  for $\frac12 <\beta <1 $ and $H^{\beta}(Z^{\ast}) \hookrightarrow \Ho'$ is continuous, the Aubin-Lions lemma, see \cite{Lions}, implies \ref{KolmogorovConditionRange}. Now, we choose $0<h \ll 1$ and obtain for $|\bxi|<h$ with Lemma \ref{LemmaEstimateShiftsUnfolded}
\begin{align*}
\|\te \veps (\cdot,\cdot + \bxi ,\cdot )& - \te \veps \|_{L^2(\Sigma_{2h}, L^2((0,T),H^1(Z^{\ast})))} 
\\
&\le C \sum_{\bar{j}\in \{0,1\}^{n-1}} \left(\frac{1}{\sqrt{\epsilon}} \|\delta \veps \|_{L^2((0,T)\times \who_{\ast,\epsilon,h}^M)} + \sqrt{\epsilon}\|\nabla \delta \veps \|_{L^2((0,T)\times \who_{\ast,\epsilon,h}^M)}\right),
\end{align*}
for $l   = \bar{j} + \left[\frac{\bxi}{\epsilon}\right]$. Due to assumption (ii), the right-hand side converges to zero for $\epsilon,\bxi \to 0$. Next we show that this convergence implies in fact the uniform convergence in \ref{KolmogorovConditionShifts} with respect to $\epsilon$, see also \cite[p.710-711]{NeussJaeger_EffectiveTransmission} or \cite[p.1476-1477]{friesecke2002theorem}.  Let $0 < \rho $. Due to our previous results there exist $0 < \epsilon_0, \, \delta_0$, such that for all $\epsilon \le \epsilon_0 $ and $\vert\bxi \vert \le \delta$ it holds that
\begin{align}\label{UniformEstimateShifts}
\|\te \veps (\cdot , \cdot + \bxi ,\cdot ) - \te \veps \|_{L^p(\Sigma_h,L^2((0,T),H^{\beta}(Z^{\ast})))} \le \rho.
\end{align}
Since $\epsilon^{-1}  \in \N$, there are only finitely many elements $\epsilon_i$ with $i = 1,\ldots, N$, such that $\epsilon_0 < \epsilon_i$. For every $\epsilon_i$ there exists a $0 < \delta_i$, such that $\eqref{UniformEstimateShifts}$ is valid for $\epsilon = \epsilon_i$ and all $\vert \bxi\vert \le \delta_i$. Choosing $\delta:= \max_{i=0,\ldots,N} \{\delta_i\}$, inequality $\eqref{UniformEstimateShifts}$ holds uniformly with respect to $\epsilon$ for all $\vert \bxi \vert\le \delta$. This implies \ref{KolmogorovConditionShifts}.
The result follows from \cite[Corollary 2.5]{GahnNeussRaduKolmogorovCompactness}. 
\end{proof}

\section{Derivation of the macroscopic model}
\label{SectionDerivationMacroscopicModel}

The aim of this section is the derivation of the macroscopic model for $\epsilon \to 0$ with the methods developed in Section \ref{Sect_Two_scale_convergence}, which are based on the a priori estimates for the microscopic solutions established in Section \ref{SectionExistenceAprioriEstimates}. 
First of all we give a convergence result for  the sequences in the bulk-domains:
\begin{proposition}\label{PropositionConvergenceBulk}
Let $\ueps$ be the sequence of solutions of the microscopic problem $\eqref{MicroscopicModel}$. Then there exists $u_0^{\pm} \in L^2((0,T),H^1(\Omega^{\pm}))$ such that up to a subsequence
\begin{align*}
\chi_{\oe^{\pm}} \ueps^{\pm} &\rightarrow u_0^{\pm} &\mbox{ strongly in }& L^2((0,T)\times \Omega^{\pm}),
\\
\ueps^{\pm}(\cdot_{\x},\pm \epsilon) &\rightarrow u_0^{\pm}|_{\Sigma} &\mbox{ strongly in }& L^2((0,T)\times \Sigma),
\\
\chi_{\oe^{\pm}} \nabla \ueps^{\pm} &\rightharpoonup \nabla u_0^{\pm} &\mbox{ weakly in }& L^2((0,T)\times \Omega^{\pm}).
\end{align*}
\end{proposition}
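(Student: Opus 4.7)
The plan is to reduce the three convergences to statements about extensions of $\ueps^{\pm}$ defined on the fixed domains $\Omega^{\pm}$, and then handle the strong $L^2$-compactness through a localized Aubin-Lions argument combined with a uniform equi-integrability estimate near $\Sigma$. I would first define $\tilde{u}_\epsilon^{\pm}$ as the reflection extension of $\ueps^{\pm}$ across the moving interfaces $S_\epsilon^{\pm} = \Sigma \times \{\pm\epsilon\}$ to all of $\Omega^{\pm}$. By the standard reflection property and Lemma \ref{LemmaAprioriEstimates}, this yields $\|\tilde{u}_\epsilon^{\pm}\|_{L^2((0,T),H^1(\Omega^{\pm}))} \le C$ uniformly in $\epsilon$, so that up to a subsequence $\tilde{u}_\epsilon^{\pm} \rightharpoonup u_0^{\pm}$ weakly in $L^2((0,T),H^1(\Omega^{\pm}))$. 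The weak convergence of the gradients $\chi_{\oe^{\pm}}\nabla\ueps^{\pm} \rightharpoonup \nabla u_0^{\pm}$ then follows by writing $\chi_{\oe^{\pm}}\nabla\ueps^{\pm} = \nabla\tilde{u}_\epsilon^{\pm} - \chi_{\Omega^{\pm}\setminus\oe^{\pm}}\nabla\tilde{u}_\epsilon^{\pm}$ and observing that the second term tends weakly to zero in $L^2$ since $|\Omega^{\pm}\setminus\oe^{\pm}| = \epsilon|\Sigma| \to 0$ while $\nabla\tilde{u}_\epsilon^{\pm}$ is $L^2$-bounded.

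The main obstacle is the strong $L^2$-convergence, since, as highlighted after Definition 2.2, $\partial_t \ueps$ is defined only as a functional on the whole space $\Heps$ and cannot be directly restricted to $H^1(\oe^{\pm})'$. I would therefore argue locally: for any compactly contained subdomain $K \subset\subset \Omega^{\pm}$ and any $\phi \in L^2((0,T),H^1_0(K))$, extension by zero to $\oe$ (for $\epsilon$ small enough so that $K \subset \oe^{\pm}$) yields an element of $L^2((0,T),\Heps)$ with unchanged norm; hence $\partial_t \ueps^{\pm}|_K$ is uniformly bounded in $L^2((0,T),H^{-1}(K))$. The standard Aubin-Lions lemma with the compact embedding $H^1(K) \hookrightarrow\hookrightarrow L^2(K)$ then gives $\ueps^{\pm} \to u_0^{\pm}$ strongly in $L^2((0,T)\times K)$. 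To upgrade this to all of $\Omega^{\pm}$, I use the uniform equi-integrability estimate
\[
\|\tilde{u}_\epsilon^{\pm}\|_{L^2((0,T)\times\Sigma\times(0,\pm\delta))}^2 \le C\delta\bigl(\|\tilde{u}_\epsilon^{\pm}|_\Sigma\|_{L^2((0,T)\times\Sigma)}^2 + \delta\|\nabla\tilde{u}_\epsilon^{\pm}\|_{L^2((0,T)\times\Omega^{\pm})}^2\bigr) \le C\delta,
\]
obtained via a one-dimensional Poincaré argument in $x_n$ together with the trace theorem on $\Omega^{\pm}$. A diagonal argument over $\delta \to 0$ then yields the strong convergence of $\chi_{\oe^{\pm}}\ueps^{\pm}$ in $L^2((0,T)\times\Omega^{\pm})$.

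For the trace convergence on $\Sigma$, I would combine the strong $L^2((0,T)\times\Omega^{\pm})$-convergence with the uniform bound in $L^2((0,T),H^1(\Omega^{\pm}))$ via the interpolation inequality $\|v\|_{H^s(\Omega^{\pm})}^2 \le \|v\|_{L^2}^{2(1-s)}\|v\|_{H^1}^{2s}$ and Hölder in time, obtaining strong convergence $\tilde{u}_\epsilon^{\pm} \to u_0^{\pm}$ in $L^2((0,T),H^s(\Omega^{\pm}))$ for every $s \in (0,1)$. Choosing $s \in (\tfrac{1}{2},1)$, continuity of the trace $H^s(\Omega^{\pm}) \to L^2(\Sigma)$ yields $\tilde{u}_\epsilon^{\pm}|_\Sigma \to u_0^{\pm}|_\Sigma$ strongly in $L^2((0,T)\times\Sigma)$. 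By the reflection construction, $\tilde{u}_\epsilon^{\pm}|_\Sigma = \ueps^{\pm}(\cdot_{\x},\pm 2\epsilon)$, and the estimate
\[
\|\ueps^{\pm}(\cdot_{\x},\pm\epsilon) - \ueps^{\pm}(\cdot_{\x},\pm 2\epsilon)\|_{L^2((0,T)\times\Sigma)}^2 \le \epsilon\|\nabla\ueps^{\pm}\|_{L^2((0,T)\times\oe^{\pm})}^2 \le C\epsilon \to 0,
\]
coming from the fundamental theorem of calculus in $x_n$ and Cauchy-Schwarz, then transfers the convergence to the actual traces $\ueps^{\pm}(\cdot_{\x},\pm\epsilon)$ required by the statement. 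The main technical difficulty throughout is working around the fact that $\partial_t \ueps$ lives only on $\Heps'$, which forces the split into an interior Aubin-Lions step plus a boundary equi-integrability estimate rather than a direct global compactness argument on $\Omega^{\pm}$.
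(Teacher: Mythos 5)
Your proposal is sound in substance and in fact supplies the details that the paper omits: the paper's own proof of Proposition \ref{PropositionConvergenceBulk} consists of a citation of Propositions 2.1 and 2.2 of \cite{NeussJaeger_EffectiveTransmission} together with the remark that their methods extend to the present setting, where $\partial_t \ueps^{\pm}$ is only a functional on test functions vanishing on $\Sigma\times\{\pm\epsilon\}$. Your localization is exactly the right way to exploit this restricted duality: test functions supported away from $S_\epsilon^{\pm}$ extend by zero to elements of $\Heps$ with unchanged norm (the $\osem$-contributions to the $\Heps$-norm vanish), so the time derivative is uniformly controlled there, and the remaining strip is handled by the one-dimensional Poincar\'e/trace estimate. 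The reflection extension, the weak gradient convergence, the interpolation argument $\|v\|_{H^s}^2\le\|v\|_{L^2}^{2(1-s)}\|v\|_{H^1}^{2s}$ for the traces, and the transfer from $x_n=\pm2\epsilon$ to $x_n=\pm\epsilon$ via the fundamental theorem of calculus all check out.

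One small repair is needed in the covering step. As written, you obtain strong convergence on compactly contained $K\subset\subset\Omega^{\pm}$ and uniform smallness only on the strip $\Sigma\times(0,\pm\delta)$; these two families of sets do not exhaust $\Omega^{\pm}$, since neighborhoods of the lateral boundary $\partial\Sigma\times(0,\pm H)$ and of the top/bottom $\Sigma\times\{\pm H\}$ are covered by neither. The natural fix is to take, say for $\Omega^+$, the set $K_\delta=\Sigma\times(\delta,H)$ (compactly contained only in the $x_n$-direction away from $\Sigma$) and test with $W_\delta:=\{\phi\in H^1(K_\delta):\phi|_{\Sigma\times\{\delta\}}=0\}$: the zero-extension of such $\phi$ still lies in $\Heps$ for $\epsilon<\delta$, the restricted time derivative is uniformly bounded in $L^2((0,T),W_\delta')$, and Aubin--Lions applies to the triple $H^1(K_\delta)\hookrightarrow\hookrightarrow L^2(K_\delta)\hookrightarrow W_\delta'$. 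Then $\Omega^{+}=K_\delta\cup\big(\Sigma\times(0,\delta)\big)$ and your diagonal argument in $\delta$ closes the proof; the case of $\Omega^-$ is symmetric. (Alternatively, keep $K\subset\subset\Omega^{\pm}$ and add an equi-integrability estimate near all of $\partial\Omega^{\pm}$ via the Sobolev embedding $H^1(\Omega^{\pm})\hookrightarrow L^{q}(\Omega^{\pm})$ for some $q>2$ and H\"older's inequality.) With this adjustment the argument is complete and is a faithful adaptation of the cited method to the weaker functional setting for $\partial_t\ueps$.
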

\begin{proof}
This result was shown in
\cite[Proposition 2.1 and 2.2]{NeussJaeger_EffectiveTransmission} for time-derivatives $\partial_t \ueps^{\pm}$ in $L^2((0,T)\times \oe^{\pm})$. In our case, we have that $\partial_t \ueps^{\pm}$ are functionals on the space
\begin{align*}
\left\{ \peps^{\pm} \in H^1(\oe^{\pm}) \, : \, \peps^{\pm} = 0 \mbox{ on } \Sigma \times \{\pm \epsilon\}\right\}. 
\end{align*}
However, the methods from \cite{NeussJaeger_EffectiveTransmission} can easily be extended to our setting and we skip the details.
\end{proof}
In the next theorem we give the convergence results for the sequences in the channels. 
\begin{theorem}
\label{MainTheoremConvergenceChannels}
Let $\ueps$ be the sequence of solutions of the microscopic problem $\eqref{MicroscopicModel}$. Then there exists $u_0^M \in L^2((0,T)\times \Sigma, H^1(Z^{\ast}))$ such that up to a subsequence it holds for $p\in [1,2)$
\begin{align*}
\uepsm &\rightarrow u_0^M &\mbox{ strongly in the two-scale sense in }& L^p,
\\
\epsilon \nabla \uepsm &\rightarrow \nabla_y u_0^M &\mbox{ in the two-scale sense}&,
\\
\uepsm|_{\neps} &\rightarrow u_0^M|_N &\mbox{ strongly in the two-scale sense in }& L^p.
\end{align*}
Additionally, $\uepsm$ and $\uepsm|_{\neps}$ also converges weakly in the two-scale sense in $L^2$. Further, $u_0^M$ fulfills the following boundary condition on the top and the bottom of $Z^{\ast}$:
\begin{align}
\label{ContinuityMacroscopicSolution}
u_0^M(t,\x,y) = u_0^{\pm}(t,\x,0) \quad \mbox{ for almost every } (t,\x,y) \in (0,T)\times \Sigma \times S_{\ast}^{\pm},
\end{align}
\ie $u_0^M$ is constant on $S_{\ast}^{\pm}$.
\end{theorem}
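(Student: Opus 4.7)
The plan is to apply Theorem \ref{TheoremStrongTSConvergence} to the sequence $\uepsm$, and then to extract the interface identity \eqref{ContinuityMacroscopicSolution} by passing to the limit in the natural transmission condition $\ueps^{\pm}=\uepsm$ on $\ssepm$. The weak two-scale convergences of $\uepsm$ and $\epsilon\nabla\uepsm$ in $L^2$ follow at once from Lemma \ref{LemmaTSWeakCompactness}(ii) applied to the bound $\|\ueps\|_{L^2((0,T),\Heps)}\le C$ of Lemma \ref{LemmaAprioriEstimates}; similarly, the trace inequality \eqref{TraceInequality} provides a uniform $L^2$-bound for $\uepsm|_{\neps}$, so that Lemma \ref{LemmaTSWeakCompactness}(iii) yields the weak two-scale convergence of the lateral trace. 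This already identifies the candidate limit $u_0^M\in L^2((0,T)\times\Sigma,H^1(Z^{\ast}))$.

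To upgrade to strong two-scale convergence in $L^p$ for $p\in[1,2)$ (and hence also for the lateral trace on $\neps$), I verify the two hypotheses of Theorem \ref{TheoremStrongTSConvergence}. Hypothesis (i) is Lemma \ref{LemmaAprioriEstimates} verbatim. For hypothesis (ii) I invoke Lemma \ref{LemmaErrorEstimatesShifts} and show that each of the three contributions on the right-hand side of \eqref{LemmaErrorEstimatesShiftsInequality} vanishes as $\epsilon l\to 0$. The prefactor $\epsilon$ is trivial. The initial-data term $\|\delta\ueps(0)\|_{\Lepsh}$ splits into a bulk and a channel part: in the bulk, $u_i^{\pm}\in L^2(\Omega^{\pm})$ and continuity of translation in $L^2$ control the shifts; in the channel, assumption \ref{AssumptionInitialCondition} provides the pointwise bound $|\delta u_{\epsilon,i}^M(x)|\le \|u_i^M(\x+\epsilon\bar{l},\cdot)-u_i^M(\x,\cdot)\|_{C^0(\overline{Z^{\ast}})}$, and integrating over the $2\epsilon$-thin layer $\osem$ absorbs the prefactor $\epsilon^{-1}$ in the $\Lepsh$-norm, leaving a quantity bounded by the translation of $u_i^M$ in $L^2(\Sigma,C^0(\overline{Z^{\ast}}))$, which again vanishes by continuity of translation. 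The bulk-shift term $\|\delta\ueps^{\pm}\|_{L^2((0,T)\times\oeh^{\pm})}$ is controlled through Proposition \ref{PropositionConvergenceBulk}: the three-term decomposition $\ueps^{\pm}(\cdot+\epsilon l)-\ueps^{\pm}=[\ueps^{\pm}(\cdot+\epsilon l)-u_0^{\pm}(\cdot+\epsilon l)]+[u_0^{\pm}(\cdot+\epsilon l)-u_0^{\pm}]+[u_0^{\pm}-\ueps^{\pm}]$, combined with the strong $L^2$-convergence of $\chi_{\oe^{\pm}}\ueps^{\pm}$ to $u_0^{\pm}$, reduces the problem to the $L^2$-translation of the limit $u_0^{\pm}$, which vanishes.

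To obtain the boundary identity \eqref{ContinuityMacroscopicSolution}, I pass to the limit in the transmission condition $\ueps^{\pm}(\x,\pm\epsilon)=\uepsm(\x,\pm\epsilon)$ on $\ssepm$. On the bulk side, Proposition \ref{PropositionConvergenceBulk} delivers strong $L^2((0,T)\times\Sigma)$-convergence to $u_0^{\pm}(\cdot,0)$, which is independent of the microscopic variable. On the channel side, the strong convergence of $\te\uepsm$ in $L^p(\Sigma,L^2((0,T),H^{\beta}(Z^{\ast})))$ established within the proof of Theorem \ref{TheoremStrongTSConvergence}, together with the continuous trace $H^{\beta}(Z^{\ast})\hookrightarrow L^2(S_{\ast}^{\pm})$ for $\beta>\tfrac12$, identifies the two-scale limit of $\uepsm|_{\ssepm}$ with $u_0^M|_{S_{\ast}^{\pm}}$. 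Testing the pointwise identity against $\psi(t,\x)\phi(y)$ with arbitrary $\phi\in C_{\per}(\overline{S_{\ast}^{\pm}})$ and equating the two limits yields $u_0^M(t,\x,y)=u_0^{\pm}(t,\x,0)$ for a.e.\ $y\in S_{\ast}^{\pm}$.

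The main obstacle is the verification of hypothesis (ii) of Theorem \ref{TheoremStrongTSConvergence}: the right-hand side of \eqref{LemmaErrorEstimatesShiftsInequality} mixes the channel scaling $\epsilon^{-1/2}$ coming from the $\Lepsh$-norm with ordinary $L^2$-bulk shifts and a weighted initial-data norm, and circular reasoning must be avoided by establishing the decay of the initial-data and bulk-shift contributions independently of the very convergence of $\uepsm$ that the theorem asserts. This is precisely what assumption \ref{AssumptionInitialCondition} and the independently proved Proposition \ref{PropositionConvergenceBulk} make possible.
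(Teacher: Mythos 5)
Your treatment of the convergence statements is essentially the paper's: both verify hypotheses (i) and (ii) of Theorem \ref{TheoremStrongTSConvergence} via Lemma \ref{LemmaAprioriEstimates} and Lemma \ref{LemmaErrorEstimatesShifts}, with the initial-data term controlled by Assumption \ref{AssumptionInitialCondition} and the bulk-shift term by Proposition \ref{PropositionConvergenceBulk}; your explicit three-term decomposition and the channel-layer computation absorbing the factor $\epsilon^{-1}$ are exactly the Kolmogorov-type translation arguments the paper invokes by citation. Where you genuinely diverge is the boundary identity \eqref{ContinuityMacroscopicSolution}. The paper obtains it by a duality argument: it tests $\epsilon\nabla\uepsm$ against vector fields $\phi(t,\x,\cdot)$ compactly supported in $Z^{\ast}\cup S_{\ast}^+\cup S_{\ast}^-$, integrates by parts so that boundary contributions appear only on $\ssepm$, passes to the limit using only the \emph{weak} two-scale convergences of $\uepsm$ and $\epsilon\nabla\uepsm$ together with the strong convergence of the bulk traces, and then identifies the trace of $u_0^M$ on $S_{\ast}^{\pm}$ by a second integration by parts. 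You instead pass to the limit directly in the transmission condition $\ueps^{\pm}=\uepsm$ on $\ssepm$, exploiting the strong convergence of $\te\uepsm$ in $L^p(\Sigma,L^2((0,T),H^{\beta}(Z^{\ast})))$ for $\beta>\tfrac12$ and the continuity of the trace onto $S_{\ast}^{\pm}$. This is correct: the paper itself uses the analogous embedding $H^{\beta}(Z^{\ast})\hookrightarrow L^2(N)$ for the lateral trace, and the unfolding operator commutes with traces, so the unfolded channel trace converges strongly to $u_0^M|_{S_{\ast}^{\pm}}$ while the unfolded bulk trace converges to the $y$-independent function $u_0^{\pm}(\cdot,0)$. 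Your route is arguably more direct; its price is that it leans on the strong compactness in the channels, whereas the paper's integration-by-parts argument needs only the weak two-scale limits and would survive even without the shift estimates. Both are valid.
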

\begin{proof}
For the convergence results we only have to check the conditions in Theorem \ref{TheoremStrongTSConvergence}. Condition \ref{TheoremStrongTSConvergenceAprioriEstimates} is just Lemma \ref{LemmaAprioriEstimates}. For \ref{TheoremStrongTSConvergenceControlShifts} we use inequality $\eqref{LemmaErrorEstimatesShiftsInequality}$ from Lemma \ref{LemmaErrorEstimatesShifts}.
To show that the initial terms in $\eqref{LemmaErrorEstimatesShiftsInequality}$ tend to $0$ for $\epsilon l \to 0$ we use 
\cite[Lemma 4.3]{Brezis},  and the Assumption \ref{AssumptionInitialCondition} on the initial conditions. We have 
\begin{align*}
\Vert \delta \ueps (0) \Vert_{\Lepsh}^2 = \foe \left\Vert \delta u_i^M\left(\bar{\cdot},\frac{\cdot_x}{\epsilon}\right)\right\Vert^2_{L^2(\who^M_{\ast,\epsilon,h})} + \sum_{\pm} \Vert \delta u_i^{\pm} \Vert^2_{L^2(\who_{\epsilon,h}^{\pm})}.
\end{align*}
The second term obviously tends to zero for $\epsilon l \to 0$. For the first term we have
\begin{align*}
\foe \left\Vert \delta u_i^M\left(\bar{\cdot},\frac{\cdot_x}{\epsilon}\right)\right\Vert^2_{L^2(\who^M_{\ast,\epsilon,h})}  &\le  \left\Vert \te \left(\delta  u_i^M\left(\bar{\cdot},\frac{\cdot_x}{\epsilon}\right) \right) \right\Vert_{L^2(\Sigma_h \times Z^{\ast})}
\\
&= \int_{\Sigma_h}\int_{Z^{\ast}} \left\vert u_i^M\left(\epsilon \left[\frac{\x}{\epsilon}\right] + \epsilon \bar{l} + \epsilon \bar{y} , y \right) - u_i^M(\x,y)\right\vert^2 d\x dy.
\end{align*}
Since $\epsilon \left[\frac{\x}{\epsilon}\right] + \epsilon \bar{l} + \epsilon \bar{y} - \x \overset{\epsilon l \to 0}{\longrightarrow} 0$ the right-hand side converges to zero for $\epsilon l \to 0$, 
see \cite[Lemma 4.3]{Brezis}.
%
The last term on the right-hand side in  $\eqref{LemmaErrorEstimatesShiftsInequality}$ goes to $0$, because of the strong convergence of $\ueps^{\pm}$ from Proposition \ref{PropositionConvergenceBulk} and the standard Kolmogorov-compactness result.

It remains to show the equation on $S^{\pm}_{\ast}$. Therefore we choose functions $\phi \in C^{\infty}((0,T)\times \Sigma \times \overline{Z^{\ast}})^n$ such that $\phi(t,x,\cdot)$ has compact support in $Z^{\ast} \cup S^+_{\ast} \cup S^-_{\ast}$, and extend this function by zero to $Z$ and then $Y$-periodically in $\bar{y}$-direction. From the two-scale results for $\uepsm$ and the strong convergence of $\ueps^{\pm}(\cdot_{\x},\pm \epsilon)$ we obtain by integration by parts
\begin{align*}
&\int_0^T \int_{\Sigma} \int_{Z^{\ast}} \nabla_y u_0^M \phi dy d\x dt  = \lim_{\epsilon \to 0} \foe \int_0^T \int_{\osem} \epsilon \nabla \uepsm \phi\left(t,\x,\fxe\right) dx dt
\\
=& \lim_{\epsilon \to 0} \bigg\{-\foe \int_0^T \int_{\osem} \uepsm \nabla_y \cdot \phi\left(t,\x,\fxe\right) dx dt + \sum_{\pm} \int_0^T \int_{\ssepm} \ueps^{\pm} \phi\left(t,\x,\fxe\right) \cdot \nu d\sigma dt \bigg\}
\\
=& -\int_0^T \int_{\Sigma} \int_{Z^{\ast}} u_0^M \nabla_y \cdot \phi dy d\x dt + \sum_{\pm}\int_0^T \int_{\Sigma} \int_{S_{\ast}^{\pm}} u_0^{\pm} \phi \cdot \nu d\sigma_y d\x dt.
\end{align*} 
Using again the integration by parts formula gives us the desired result.
\end{proof}

Next, we will state the macroscopic model, which is solved by the limit function $u_0:= (u_0^+,u_0^M,u_0^-)$ from Proposition \ref{PropositionConvergenceBulk} and Theorem \ref{MainTheoremConvergenceChannels}. The appropriate solution space is $L^2((0,T),\mathcal{H})$, where $\mathcal{H}$ is given by
\begin{eqnarray*}
\mathcal{H}&:=& \left\{u = (u^+,u^M,u^-) \in H^1(\Omega^+) \times L^2(\Sigma,H^1(Z^{\ast})) \times H^1(\Omega^-) \, : \right.\\
&& \hspace{7.8cm}\left. u^{\pm}|_{\Sigma} = u^M|_{S_{\ast}^{\pm}} \mbox{ on } \Sigma \times S^{\pm}_{\ast}\right\},
\end{eqnarray*}
with the inner product
\begin{align}\label{Inner_Product_H}
(u,\phi)_{\mathcal{H}} = \sum_{\pm} (u^\pm, \phi^\pm)_{H^1(\Omega^\pm)} + (u^M,\phi^M)_{\Sigma \times Z^{\ast}} + (\nabla_y u^M , \nabla_y \phi^M )_{\Sigma \times Z^{\ast}}.
\end{align}
For the derivation of the macroscopic problem in the limit $\epsilon \to 0$,
we have to exploit the convergence properties of the microscopic solutions $u_\epsilon$. Especially this requires to choose in the variational equation  \eqref{VariationalEquation} test-functions adapted to the structure of the transmission problem and whose restrictions to the channel domain is admissible for the definition of two-scale convergence. In fact, in general, for $u\in \mathcal{H}$ the function $u^M(\x,\fxe)$ is not well-defined in $\osem$ and $\neps$. Therefore, we consider the subspace $\mathcal{H}^{\infty}$ of $\mathcal{H}$ of smooth functions
\begin{align*}
\mathcal{H}^{\infty}:= C^{\infty}\big(\overline{\Omega^+}\big) \times C_0^{\infty}\big(\Sigma,C^{\infty}\big(\overline{Z^{\ast}}\big)\big) \times C^{\infty}\big(\overline{\Omega^-}\big) \cap \mathcal{H}.
\end{align*}
The following density result holds.
%

\begin{proposition}\label{Proposition_Density}
The space $\mathcal{H}^{\infty}$ is dense in $\mathcal{H}$ with respect to the norm induced by \eqref{Inner_Product_H}.
\end{proposition}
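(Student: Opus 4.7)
The plan is to decouple the rigid trace-matching constraint by writing $u^M$ as an affine lift of the bulk traces plus a residual that vanishes on $S_{\ast}^{\pm}$, approximating each piece separately, and reassembling. Given $u = (u^+, u^M, u^-) \in \mathcal{H}$, I first introduce the explicit lift
\begin{equation*}
w^M(\bar x, y) := \frac{1+y_n}{2}\, u^+|_\Sigma(\bar x) + \frac{1-y_n}{2}\, u^-|_\Sigma(\bar x),
\end{equation*}
which lies in $L^2(\Sigma, H^1(Z^{\ast}))$ since $u^\pm|_\Sigma \in H^{1/2}(\Sigma) \subset L^2(\Sigma)$ by the trace theorem, and satisfies $w^M(\bar x, y) = u^\pm|_\Sigma(\bar x)$ on $\Sigma \times S_{\ast}^{\pm}$. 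The residual $v^M := u^M - w^M$ consequently lies in $L^2(\Sigma, \Ho)$, thanks to the matching condition on $u$.

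Next, I would approximate the three pieces independently. The bulk functions $u^\pm$ can be approximated in $H^1(\Omega^\pm)$ by smooth $u^\pm_n \in C^\infty(\overline{\Omega^\pm})$, which is standard since $\Omega^\pm$ is Lipschitz. For the residual, combining the density of $C^\infty(\overline{Z^{\ast}}) \cap \Ho$ in $\Ho$ (a standard Sobolev approximation statement) with the density of $C_0^\infty(\Sigma)$-valued simple functions in $L^2(\Sigma, \Ho)$, one obtains $v^M_n \in C_0^\infty(\Sigma, C^\infty(\overline{Z^{\ast}}) \cap \Ho)$ with $v^M_n \to v^M$ in $L^2(\Sigma, H^1(Z^{\ast}))$.

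Then I would reassemble by setting
\begin{equation*}
w^M_n(\bar x, y) := \frac{1+y_n}{2}\, u^+_n|_\Sigma(\bar x) + \frac{1-y_n}{2}\, u^-_n|_\Sigma(\bar x), \qquad u^M_n := v^M_n + w^M_n,
\end{equation*}
and take $(u^+_n, u^M_n, u^-_n)$ as the approximation of $u$. The matching condition $u^M_n|_{S_{\ast}^{\pm}} = u^\pm_n|_\Sigma$ then holds by construction, and the required smoothness of $u^M_n$ in $(\bar x, y)$ is inherited from the smoothness of the traces $u^\pm_n|_\Sigma \in C^\infty(\overline{\Sigma})$, the affine profile in $y_n$, and $v^M_n$. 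Convergence in the $\mathcal{H}$-norm splits into three contributions: the bulk components converge by construction; $v^M_n \to v^M$ in $L^2(\Sigma, H^1(Z^{\ast}))$ by the previous step; and the lifted part is controlled by
\begin{equation*}
\|w^M - w^M_n\|_{L^2(\Sigma, H^1(Z^{\ast}))} \le C \sum_{\pm} \|u^\pm|_\Sigma - u^\pm_n|_\Sigma\|_{L^2(\Sigma)} \le C \sum_{\pm} \|u^\pm - u^\pm_n\|_{H^1(\Omega^\pm)},
\end{equation*}
using the explicit form of $w^M$ and the continuity of the trace operator $H^1(\Omega^\pm) \to H^{1/2}(\Sigma) \hookrightarrow L^2(\Sigma)$.

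The main obstacle is the rigidity of the trace-matching constraint across $S_{\ast}^{\pm}$, which couples $u^M$ on the top and bottom of the reference channel to the bulk traces and prevents independent approximation of the three components. The explicit affine lift precisely removes this coupling and reduces the density claim to three classical approximation statements, so no subtle boundary manipulation is required beyond careful bookkeeping of the trace estimates.
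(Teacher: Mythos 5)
Your route is genuinely different from the paper's: the paper proves the proposition indirectly, by showing that the orthogonal complement of $\overline{\mathcal{H}^{\infty}}$ in $\mathcal{H}$ is trivial (testing first with channel functions vanishing on $S_{\ast}^{\pm}$ to deduce $\Delta_y u^M = u^M$ with vanishing normal trace, then invoking the density result of Bernard and the divergence theorem), whereas you construct the approximating sequence explicitly. Interestingly, the affine lift $\frac{1+y_n}{2}u^+|_{\Sigma}+\frac{1-y_n}{2}u^-|_{\Sigma}$ that drives your decomposition is exactly the device the paper uses inside its orthogonality argument. Your computations for the lift are correct: since $\mathcal{H}$ only requires $L^2$-regularity in $\bar x$ for the channel component, the lift lands in $L^2(\Sigma,H^1(Z^{\ast}))$ using nothing more than $u^{\pm}|_{\Sigma}\in L^2(\Sigma)$, the residual lies in $L^2(\Sigma,\Ho)$, and the reassembly estimate via the trace theorem is fine. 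The density of $C^{\infty}(\overline{Z^{\ast}})\cap\Ho$ in $\Ho$ is indeed the nontrivial ingredient, but it is precisely the type of statement the paper itself cites (Bernard), so relying on it is acceptable.

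There is, however, one concrete gap: the reassembled function $u^M_n = v^M_n + w^M_n$ does not in general belong to $C_0^{\infty}\big(\Sigma,C^{\infty}(\overline{Z^{\ast}})\big)$, which is what the definition of $\mathcal{H}^{\infty}$ demands for the middle component. While $v^M_n$ is compactly supported in $\Sigma$ by construction, the lifted part $w^M_n$ carries the traces $u^{\pm}_n|_{\Sigma}$, which for generic smooth approximants $u^{\pm}_n\in C^{\infty}(\overline{\Omega^{\pm}})$ do not vanish near $\partial\Sigma$. Note that membership in $\mathcal{H}^{\infty}$ forces $u^{\pm}|_{\Sigma}$ to vanish in a neighbourhood of $\partial\Sigma$, so the proposition implicitly asserts that every $u^{\pm}\in H^1(\Omega^{\pm})$ can be approximated in $H^1(\Omega^{\pm})$ by smooth functions whose trace on $\Sigma$ vanishes near $\partial\Sigma$. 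This is true, but not for free: a naive cut-off $\chi_{\delta}(\bar x)$ applied on all of $\Omega^{\pm}$ produces a gradient error of order $\delta^{-1}$ on a collar of measure $\delta$ and does not converge; one has to exploit that $\partial\Sigma\times\{0\}$ has codimension two in $\R^n$ and hence zero $H^1$-capacity (e.g.\ via logarithmic cut-offs supported in a tubular neighbourhood of that edge). Your proof needs this additional step, or alternatively a preliminary reduction showing that functions in $\mathcal{H}$ whose bulk traces vanish near $\partial\Sigma$ are dense in $\mathcal{H}$; as written, the sequence you produce lies in $C^{\infty}(\overline{\Omega^+})\times C^{\infty}\big(\overline{\Sigma},C^{\infty}(\overline{Z^{\ast}})\big)\times C^{\infty}(\overline{\Omega^-})\cap\mathcal{H}$ but not in $\mathcal{H}^{\infty}$.
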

\begin{proof}
First of all, we have the orthogonal decomposition  
\begin{align}\label{OrthogonalDecomposition}
 \mathcal{H} = \overline{\mathcal{H}^{\infty}} \oplus \overline{\mathcal{H}^{\infty}}^{\perp}.
\end{align}
We will show that $\overline{\mathcal{H}^{\infty}}^{\perp} = \{0\}$ and therefore $ \mathcal{H}  = \mathcal{H}^{\infty} $. 
Due to $\eqref{OrthogonalDecomposition}$, for $u =(u^+,u^M,u^-)\in \overline{\mathcal{H}^{\infty}}^{\perp}$ we have for all $\phi = (\phi^+ , \phi^M, \phi^-) \in \overline{\mathcal{H}^{\infty}}$
\begin{align}\label{OrthogonalDecompositionEquation}
(u^M, \phi^M)_{L^2(\Sigma,H^1(Z^{\ast}))} + \sum_{\pm} (u^{\pm},\phi^{\pm})_{H^1(\Omega^{\pm})}  = 0 .
\end{align}
Especially, it holds for all $ \phi^M \in C_0^{\infty}(\Sigma , C^{\infty}( \overline{ Z^{\ast}}))  $ with $\phi^M = 0 $ on $S_{\ast}^{\pm}$ that (choose $\phi = (0,\phi^M,0)$ in $\eqref{OrthogonalDecompositionEquation}$)
\begin{align*}
(u^M,\phi^M)_{\Sigma \times Z^{\ast}} + (\nabla_y u^M , \nabla_y \phi^M )_{\Sigma \times Z^{\ast}} = 0 .
\end{align*}
This implies $\nabla_y u^M \in L^2(\Sigma,H(\mathrm{div},Z^{\ast}))$ and $\Delta_y u^M = u^M$. Hence, we have $\nabla_y u^M \cdot \nu \in L^2(\Sigma, H^{-\frac12}(\partial Z^{\ast}))$ and the divergence theorem implies that for all $\phi^M \in C_0^{\infty}(\Sigma , C^{\infty}( \overline{ Z^{\ast}}))$ with $\phi^M = 0 $ on $S_{\ast}^{\pm}$ it holds 
\begin{align}
\label{DensityResultNormalTracesZero}
\langle \nabla_y u^M \cdot \nu , \phi^M \rangle_{L^2(\Sigma,H^{-\frac12}(\partial Z^{\ast})),L^2(\Sigma,H^{\frac12}(\partial Z^{\ast}))} = 0.
\end{align}
By density, see \cite[Theorem 3.1]{Bernard2011}, equation $\eqref{DensityResultNormalTracesZero}$ is also true for all $\phi^M \in L^2(\Sigma,H^1(Z^{\ast}))$ with $\phi^M = 0$ on $S^{\pm}_{\ast}$.
Using again $\eqref{OrthogonalDecompositionEquation}$ and the divergence theorem, we obtain for arbitrary $\phi = (\phi^+,\phi^M,\phi^-) \in \mathcal{H}^{\infty}$ 
\begin{align}
\label{AuxiliaryEquationDensity}
\langle \nabla_y u^M \cdot \nu , \phi^M \rangle_{L^2(\Sigma,H^{-\frac12}(\partial Z^{\ast})),L^2(\Sigma,H^{\frac12}(\partial Z^{\ast}))} = - \sum_{\pm} (u^{\pm},\phi^{\pm})_{H^1(\Omega^{\pm})}.
\end{align}
Now, let us define the function 
\begin{align*}
\tilde{\phi}^M(\x,y) :=  \frac{y_n + 1}{2} \phi^+(\x,0) - \frac{y_n - 1}{2} \phi^-(\x,0).
\end{align*}
It holds that $(\phi^+,\tilde{\phi}^M,\phi^-) \in \mathcal{H}^{\infty}$ and $\phi^M - \tilde{\phi}^M = 0$ on $S_{\ast}^{\pm}$. Hence, due to $\eqref{DensityResultNormalTracesZero}$ and $\eqref{AuxiliaryEquationDensity}$ we have 
\begin{align*}
- \sum_{\pm} &( u^{\pm},\phi^{\pm})_{H^1(\Omega^{\pm})} 
\\
&= \left\langle \nabla_y u^M \cdot \nu , \frac{y_n + 1}{2} \phi^+(\x,0) - \frac{y_n - 1}{2} \phi^-(\x,0) \right\rangle_{L^2(\Sigma,H^{-\frac12}(\partial Z^{\ast})),L^2(\Sigma,H^{\frac12}(\partial Z^{\ast}))}.
\end{align*}
By density of $C^{\infty}(\overline{\Omega^{\pm}})$ in $H^1(\Omega^{\pm})$, the equation above holds for all $\phi^{\pm} \in H^1(\Omega^{\pm})$. Using again $\eqref{DensityResultNormalTracesZero}$, we get $\eqref{AuxiliaryEquationDensity}$ for all $\phi \in \mathcal{H}$ and especially for $\phi = u$. Now, the divergence theorem applied to the left-hand side of $\eqref{AuxiliaryEquationDensity}$ with $\phi^M = u^M$ and using $\Delta u^M = u^M$  implies $u=0$. 
%
%
%
%
%
%
\end{proof}

\begin{theorem}
\label{MainResultMacroscopicModel}
The limit function $u_0 = (u_0^+,u_0^M,u_0^-)$ from Proposition \ref{PropositionConvergenceBulk} and Theorem \ref{MainTheoremConvergenceChannels} is the unique weak solution of the following problem:
\begin{align*}
u_0 \in L^2((0,T),\mathcal{H}) \cap H^1((0,T),\mathcal{H}')
\end{align*}
and
\begin{subequations}
\begin{align}
\partial_t u_0^{\pm} - D^{\pm} \Delta u_0^{\pm} &= f^{\pm} (u_0^{\pm}) &\mbox{ in }& (0,T)\times \Omega^{\pm}, \label{EqMacropm}
\\ 
-D^{\pm} \nabla u_0^{\pm} \cdot \nu^{\pm} &= 0 &\mbox{ on }& (0,T)\times \partial \Omega^{\pm} \setminus \Sigma,
\\
u_0^{\pm}(0) &= u_i^{\pm} &\mbox{ in }& \Omega^{\pm},
\end{align}
with the interface conditions
\begin{align}
u_0^{\pm}|_{\Sigma} &= u_0^M|_{S_{\ast}^{\pm}} &\mbox{ on }& (0,T)\times \Sigma \times S_{\ast}^{\pm}, \label{TCContMacro}
\\
D^{\pm} \nabla u_0^{\pm} \cdot \nu^{\pm} &= \int_{S_{\ast}^{\pm}} D^M \nabla_y u_0^M \cdot \nu^M d\sigma &\mbox{ on }&  (0,T)\times \Sigma, \label{TCMacro}
\end{align}
where $\nu^{\pm}$ is the outer unit normal on $\partial \Omega^{\pm}$, and $\nu^M $ is the outer-unit normal on $\partial Z^{\ast}$, and $u_0^M$ solves the local cell problem
\begin{align}
\partial_t u_0^M -\nabla_y \cdot \big(D^M \nabla_y u_0^M\big) &= g(u_0^M) &\mbox{ in }& (0,T)\times \Sigma \times Z^{\ast}, \label{EqMacroM}
\\  
-D^M\nabla_y u_0^M \cdot \nu &= h(u_0^M) &\mbox{ on }& (0,T)\times \Sigma \times N, \label{BCNeumannM}
\\
u_0^M &= u_i^M &\mbox{ in }& \Sigma \times Z^{\ast}. 
\end{align}
\end{subequations}
In other words, $u_0$ is a solution of the following problem: For all $\phi = (\phi^+,\phi^M,\phi^-) \in \mathcal{H}$ it holds almost everywhere in $(0,T)$
\begin{align}\begin{aligned}\label{VarMacroModel}
\langle \partial_t u_0 , &\phi \rangle_{\mathcal{H}',\mathcal{H}}
+ \sum_{\pm}  (D^{\pm}\nabla u_0^{\pm},\nabla \phi^{\pm})_{ \Omega^{\pm}}
 + (D^M\nabla_y u_0^M, \nabla_y \phi^M)_{ \Sigma \times Z^{\ast}}
 \\
 =& \sum_{\pm} (f^{\pm}(u_0^{\pm}),\phi^{\pm})_{ \Omega^{\pm}} + (g(u_0^M),\phi^M)_{ \Sigma \times Z^{\ast}} - (h(u_0^M),\phi^M)_{ \Sigma \times N} , 
\end{aligned}\end{align}
and the initial condition $u_0(0) = (u_i^+,u_i^M,u_i^-)$ (which is well-defined since it holds  $u_0 \in C^0([0,T],L^2(\Omega^+) \times L^2(\Sigma \times Z^{\ast}) \times L^2(\Omega^-))$). 
Here, the weak equation $\eqref{VarMacroModel}$ is formally obtained in the usual way by testing the strong formulation with $\phi \in \mathcal{H}$, integrating with respect to $\Omega^{\pm}$ resp. $\Sigma \times Z^{\ast}$, and using integration by parts.
\end{theorem}
\begin{proof}
From Proposition \ref{PropositionConvergenceBulk} and Theorem \ref{MainTheoremConvergenceChannels} immediately follows that $u_0 \in L^2((0,T),\mathcal{H})$. Let $\phi \in C_0^{\infty}([0,T),\mathcal{H}^{\infty})$ and choose as a test-function in $\eqref{VariationalEquation}$
\begin{align*}
\peps(t,x):= \begin{cases}
\phi^+(t,\x,x_n - \epsilon) &\mbox{ for } x \in \oe^+,
\\
\phi^M\left(t,\x,\fxe\right) &\mbox{ for } x \in \osem,
\\
\phi^-(t,\x,x_n + \epsilon) &\mbox{ for } x \in \oe^-.
\end{cases}
\end{align*}
Integration with respect to time, integration by parts in time, and Proposition \ref{PropositionConvergenceBulk} and Theorem \ref{MainTheoremConvergenceChannels} together with Lemma \ref{LemmaTSStrongConvergence} imply for $\epsilon \to 0$
\begin{align*}
\sum_{\pm} & \left\{- ( u_0^{\pm},\partial_t \phi^{\pm} )_{(0,T)\times \Omega^{\pm}} + (D^{\pm}\nabla u_0^{\pm},\nabla \phi^{\pm})_{(0,T)\times \Omega^{\pm}}\right\}
\\  
 &- (u_0^M,\partial_t \phi^M)_{(0,T)\times \Sigma \times Z^{\ast}} + (D^M\nabla_y u_0^M, \nabla_y \phi^M)_{(0,T)\times \Sigma \times Z^{\ast}}
 \\
 =& \sum_{\pm} \left\{ (f^{\pm}(u_0^{\pm}),\phi^{\pm})_{(0,T)\times \Omega^{\pm}} + (u_i^{\pm},\phi^{\pm}(0))_{\Omega^{\pm}} \right\}
 \\
&+ (g(u_0^M),\phi^M)_{(0,T)\times \Sigma \times Z^{\ast}} - (h(u_0^M),\phi^M)_{(0,T)\times \Sigma \times N} + (u_i^M,\phi^M(0))_{ \Sigma \times Z^{\ast}}. 
\end{align*}
By density, see Proposition \ref{Proposition_Density}, this equation also holds for all $\phi \in C_0^{\infty}([0,T), \mathcal{H})$. 
Especially, this equation implies $\partial_t u_0 \in L^2((0,T),\mathcal{H}')$ and $u_0(0) = (u_i^+,u_i^M,u_i^-)$. 
 Hence, $u_0$ is a weak solution of the macroscopic model in the theorem.  Uniqueness follows by standard arguments.
\end{proof}

We emphasize that a regular solution of \eqref{VarMacroModel} satisfies the strong formulation in the theorem. This is obtained in the following way: We first choose as test-functions $\phi$ in \eqref{VarMacroModel} functions from $C_0^{\infty}([0,T), \mathcal{H})$ with compact support in $\Omega^\pm$ respectively in $Z^\ast$. This immediately yields the equations \eqref{EqMacropm} respectively \eqref{EqMacroM}. Afterwards, we test the weak formulation \eqref{VarMacroModel} with functions of the form $(0, \phi^M, 0)\in C_0^{\infty}([0,T), \mathcal{H})$ with $\textrm{supp}(\phi^M(\bar x, \cdot)) \cap (S_\ast^\pm) = \emptyset $ to exhibit the nonhomogeneous Neumann-boudary condition \eqref{BCNeumannM} on $\Sigma \times N$. Finally, to derive the transmission conditions \eqref{TCMacro}, we test the weak formulation with functions of the form $ (\phi^+, \phi^M, 0)\in C_0^{\infty}([0,T), \mathcal{H})$, with $\phi^+ \not\equiv 0$ on $\Sigma$, to obtain the transmission condition 
\begin{align*}
D^{+} \nabla u_0^{+} \cdot \nu^{+} = \int_{S_{\ast}^{+}} D^M \nabla_y u_0^M \cdot \nu^M d\sigma \mbox{ on } (0,T) \times \Sigma
\end{align*}
and analogously with functions of the form $ (0, \phi^M, \phi^-)\in C_0^{\infty}([0,T), \mathcal{H})$, with $\phi^- \not\equiv 0$ on $\Sigma$, to obtain the second transmission condition in \eqref{TCMacro}. We emphasize that the interface conditions \eqref{TCContMacro} have been derived in Theorem \ref{MainTheoremConvergenceChannels}.

\begin{remark}\mbox{}
\begin{enumerate}
[label = (\roman*)]
\item By 
choosing test-functions in $\eqref{VarMacroModel}$ with $\phi^+ = \phi^-$ on $\Sigma$ (and $\phi^M$ for example constant with respect to $y$, \ie $\phi^M = \phi^+ = \phi^-$ in $Z^{\ast}$) 
we obtain in a weak sense the following relation for the jump of the normal fluxes:
\begin{align*}
\big(D^+ \nabla u_0^+ - D^- \nabla u^-\big)\cdot \nu^+ = \sum_{\pm}\int_{S^{\pm}_{\ast} } D^M \nabla_y u_0^M \cdot \nu^M d\sigma  \quad \mbox{ on } (0,T)\times \Sigma.
\end{align*} 
Formally this is obtained by subtracting the two equations in \eqref{TCMacro}.
\item We only considered a single concentration $\ueps:(0,T)\times \oe \rightarrow \R$. However, the result can easily be generalized to systems and vector-valued functions $\tilde{u}_{\epsilon}: (0,T)\times \oe \rightarrow \R^m$. In this case, the nonlinear reaction-kinetics have the form $F:[0,T] \times \oe \times \R^m \rightarrow \R^m$ and have to be uniformly Lipschitz-continuous with respect to the last variable. The main difference lies in the derivation of the a priori estimates. However, this can be done by adding up the single equations, see for example \cite{GahnNeussRaduKnabner2018a} for more details.
\item For the sake of simplicity we just considered constant scalar diffusion $D^{\pm}$ in the bulk-domains. It is obvious that we can also consider a diffusion tensor $D^{\pm} \in \R^{n\times n}$ positive and symmetric. However, we can also consider oscillating diffusion coefficients $D^{\pm}\left(\fxe\right)$ with $D^{\pm} \in L^{\infty }((0,1)^n)^{n\times n }$. In this case one can use standard homogenization theory in the bulk-domains to obtain a reaction-diffusion problem in the macroscopic bulk-domains $\Omega^{\pm}$ with effective diffusion coefficients.
\item We emphasize that our method is not restricted to the specific form of the microscopic problem $\eqref{MicroscopicModel}$, but the methods developed in  Section \ref{Sect_Two_scale_convergence} can also be applied to other problems for which the  a priori estimates from 
 \ref{LemmaAprioriEstimates} and \ref{LemmaErrorEstimatesShifts} 
are  satisfied.  
\end{enumerate}
\end{remark}

\section*{Acknowledgements}
The first author was supported by the SCIDATOS project, funded by the Klaus Tschira Foundation (Grant number 00.0277.2015).

\bibliographystyle{abbrv}
\bibliography{literature} 

\end{document}